\newcommand{\nsh}{{n\,\sharp}}
\def\Hom{\mathop{\rm Hom}\nolimits}
\def\uK{\underline{K}}
\newtheorem{theorem}{Theorem}[section]
\newtheorem{lemma}[theorem]{Lemma}
\newtheorem{proposition}[theorem]{Proposition}
\newtheorem{corollary}[theorem]{Corollary}
\theoremstyle{definition}
\newtheorem{definition}[theorem]{Definition}
\newtheorem{example}[theorem]{Example}
\newtheorem{remark}[theorem]{Remark}
\newcommand{\EG}{\underline{E}G}
\newcommand{\CR}{\mathcal{R}}
\newcommand{\QQ}{\mathcal{Q}}
\newcommand{\N}{\mathbb{N}}
\newcommand{\Z}{\mathbb{Z}}
\newcommand{\Q}{\mathbb{Q}}
\newcommand{\R}{\mathbb{R}}
\newcommand{\C}{\mathbb{C}}
\newcommand{\CCC}{\mathbb{C}}
\newcommand{\id}[1]{{\rm id}_{#1}}
\begin{document}
\title[Obstructions to matricial stability and almost flat K-theory]{Obstructions to matricial stability of discrete groups and almost flat K-theory}
\author{Marius Dadarlat}\address{MD: Department of Mathematics, Purdue University, West Lafayette, IN 47907, USA}\email{mdd@purdue.edu}	
\begin{abstract}
A discrete countable group $G$ is matricially stable if the finite dimensional approximate unitary representations of $G$ are perturbable to genuine representations in the point-norm topology.  For large classes of groups $G$, we show that matricial stability implies the vanishing of the rational cohomology  of $G$ in all nonzero even dimensions. We revisit a method of constructing almost flat K-theory classes of $BG$ which involves the dual assembly map and quasidiagonality properties of $G$. The existence of almost flat K-theory classes of $BG$ which are not flat represents an obstruction to matricial stability of $G$ due to continuity properties of the approximate monodromy correspondence.
\end{abstract}

\thanks{M.D. was partially supported by NSF grant \#DMS--1700086}
\maketitle
\section{Introduction}
The realization that there are K-theory obstructions  to perturbing approximate finite dimensional representations of $C^*$-algebras to genuine representations has emerged through work of Kazhdan \cite{Kaz-paper}, Voiculescu \cite{Voi:unitaries},
Connes, Gromov and Moscovici \cite{CGM:flat} and Connes and Higson \cite{Con-Hig:etheory}.

In the realm of groups, it is natural to ask when an approximate representation is close to a genuine representation. The reader is referred to the survey papers by  Arzhantseva \cite{Arzhantseva} and  Thom \cite{Thom:ICM} for a background discussion and more context for this question.
All norms considered in this paper refer to the uniform operator norm, $\|T\|=\sup_{\|\xi\|\leq 1} \|T\xi\|$. $M_n$ will denote the $n\times n$ complex matrices.
We consider only countable discrete groups $G$. By a group representation we will always mean a unitary representation.
A sequence of unital maps \mbox{$\{\varphi_n:G \to U(k_n)\}_{n\in \N}$} is called an asymptotic homomorphism if
\begin{equation}\label{ah}
  \lim_{n\to \infty} \|\varphi_n(st)-\varphi_n(s)\varphi_n(t)\|=0, \,\text{for all}\, s, t \in G.
\end{equation}
 $G$ is an MF-\emph{group} \cite{CDE2013} or  a $(U(k_n), \|\cdot\|)_{n}$ approximated group  (in the terminology of  \cite{DECHIFFRE}) if there exists an asymptotic homomorphism that separates the elements of $G$ in the sense that
  \begin{equation}\label{sep}
   \limsup_{n\to \infty} \|\varphi_n(s)-1||>0, \,\text{for all}\, s\in G\setminus\{e\}.
\end{equation}

  It is an open problem to find examples of discrete countable groups which are not MF. Amenable groups, or more generally, groups that are locally embeddable in amenable groups (LEA) are MF by the breakthrough paper of Tikuisis, White and Winter \cite{TWW}.

A group $G$ is called $(U(k_n), \|\cdot\|)_{n}$-stable \cite{DECHIFFRE} or \emph{matricially stable}  \cite{ESS}  if for any  asymptotic homomorphism $\{\varphi_n:G \to U(k_n)\}_{n\in \N}$ (not necessarily a separating one) there is a sequence of homomorphisms \mbox{$\{\pi_n:G \to U(k_n)\}_{n\in \N}$} such that
$\lim_{n\to \infty} \|\varphi_n(s)-\pi_n(s)\|=0$ for all $s\in G.$
A systematic study of matricial stability was undertaken by Eilers, Shulman  and  S{\o}rensen in \cite{ESS}.

  In his seminal paper \cite{Voi:unitaries}, Voiculescu  showed that $\Z^2$ is not matricially stable by using a Fredholm index argument.  However, he suggested that this phenomenon is due to the nonvanishing of the 2-cohomology of $\mathbb{T}^2$. Since $B\Z^2=\mathbb{T}^2$ this amounts to the nonvanishing of $H^2(\Z^2,\Z)$. The role of K-theory in Voiculescu's example was highlighted by Exel and Loring \cite{Exel-Loring:inv=}.

The aim of this paper is to point out that the nonvanishing of even rational cohomology in positive dimensions is a first obstruction to matricial stability for large classes of discrete groups.

\begin{theorem}\label{thm:1}
Let $G$ be a countable discrete MF-group  that admits a $\gamma$-element (e.g. G is uniformly embeddable in a Hilbert space).
  If $H^{2k}(G,\Q)\neq 0$ for some $k\geq 1$,
  then $G$  is not matricially stable.
\end{theorem}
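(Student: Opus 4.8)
\emph{Sketch of a proof.}
I would argue by contradiction, producing from the hypothesis $H^{2k}(G,\Q)\neq0$ an asymptotic homomorphism $\{\varphi_n\colon G\to U(k_n)\}$ that cannot be perturbed, in the point--norm topology, to a sequence of genuine representations. The invariant doing the work is an index pairing attached to $\varphi_n$ through the approximate monodromy correspondence: it depends continuously on the quasi-representation, it vanishes on honest representations, and it can be forced to be nonzero.

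First I would fix the target. To a sufficiently multiplicative unital map $\varphi\colon G\to U(k)$ one attaches, via the monodromy correspondence, an almost flat bundle $E_\varphi$ over a fixed finite subcomplex $Y$ of a model of $BG$ (the one supporting the finitely many relations used), and this assignment is locally constant: if $\varphi$ and $\psi$ are $\delta$-multiplicative and $\delta$-close on a prescribed finite subset of $G$, with $\delta$ small depending only on $Y$, then $[E_\varphi]=[E_\psi]$ in $K^0(Y)$. A genuine representation $\pi$ is sent to the flat bundle it defines, and Chern--Weil theory gives $\mathrm{ch}_j(E_\pi)=0$ for all $j>0$; hence the reduced class $[E_\pi]-(\dim\pi)[1]$ has vanishing Chern character and pairs to $0$ with every element of $K_0(Y)$. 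Thus, if $G$ were matricially stable, then for every asymptotic homomorphism $\{\varphi_n\colon G\to U(k_n)\}$ and every $\sigma\in K_0(Y)$ one would have $\langle[E_{\varphi_n}]-k_n[1],\sigma\rangle=0$ for all large $n$.

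Next I would violate this. By duality of rational homology and cohomology, $H^{2k}(G,\Q)\neq0$ gives $H_{2k}(G,\Q)\neq0$; since the homology of $BG$ is the filtered colimit of the homologies of its finite subcomplexes, there are a finite $Y\subset BG$ and $\tau\in H_{2k}(Y,\Q)$ with nonzero image in $H_{2k}(BG,\Q)$. Lifting $\tau$ along the rational homological Chern character on $Y$ gives $\sigma\in K_0(Y)$ whose image in $K_0(BG)$ is nonzero already in degree $2k$. Then I would invoke the construction of almost flat K-theory classes indicated in the abstract: since $G$ admits a $\gamma$-element, the Mishchenko line bundle over $BG$ together with the dual assembly map splits the assembly map on the class of $\sigma$; feeding in an asymptotic representation of $C^*(G)$ into matrix algebras, available because $G$ is an MF-group, one obtains $\{\varphi_n\colon G\to U(k_n)\}$ for which $\langle[E_{\varphi_n}]-k_n[1],\sigma\rangle$ converges to a nonzero rational number. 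With the previous paragraph this contradicts matricial stability of $G$, so $G$ is not matricially stable. The parenthetical case follows since uniform embeddability into Hilbert space yields a $\gamma$-element via the Dirac--dual-Dirac method.

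\emph{Main obstacle.} The crux is this last construction: realizing a given $K$-homology class of $BG$ by an asymptotic homomorphism with prescribed nonzero reduced index pairing. One must carry out the dual-Dirac/$\gamma$-element splitting at the level of quasi-representations, controlling the quasidiagonal (MF) approximation of $C^*(G)$ quantitatively enough that the holonomy maps of $G$ are genuinely asymptotically multiplicative, that their bundles live over the fixed $Y$, and that passing through the $\gamma$-element preserves the degree-$2k$ part of the pairing. The local constancy of the monodromy correspondence, with $\delta$ controlled in terms of $Y$, is the more routine second ingredient.
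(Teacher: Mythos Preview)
Your overall architecture is right and matches the paper's: almost flat bundles via approximate monodromy, vanishing of rational Chern classes on flat bundles, and the $\gamma$-element to guarantee that the relevant $K$-theory classes are in range. The place where your sketch and the paper genuinely diverge is precisely the step you flag as the main obstacle, and your proposed resolution of it does not work as written.

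You propose to ``feed in an asymptotic representation of $C^*(G)$ into matrix algebras, available because $G$ is an MF-group'' and thereby produce $\{\varphi_n\}$ with prescribed nonzero pairing. But MF only hands you \emph{some} separating sequence $G\to U(k_n)$; it gives no control over which $K$-theory class the associated almost flat bundle realizes, and there is no obvious way to compose an MF approximation with the $\gamma$-element machinery so as to hit a chosen class dual to $\sigma$. The $\gamma$-element argument lives in $KK(C^*(G),B)$ and produces classes there; to turn such a class into a sequence of finite-dimensional approximate representations you need it to be \emph{quasidiagonal} in the sense of Definition~\ref{def:kkqd}, i.e.\ representable by a Cuntz pair admitting an asymptotically central approximate unit of finite-rank projections. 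MF alone does not give that.

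The paper closes this gap by a bootstrapping step you are missing. Rather than using MF directly, one argues by contradiction and observes (Lemma~\ref{lemma:qdmap}) that a matricially stable MF-group is MAP, hence \emph{quasidiagonal} in the sense of Definition~\ref{def:qd}. This is the crucial upgrade: by Proposition~\ref{cor:qd-embed} the map $q_G:C^*(G)\to C^*_r(G)$ then factors through a quasidiagonal $C^*$-algebra $D$, and combined with Kasparov's result that $\gamma\, KK(C^*(G),B)\subset q_G^*KK(C^*_r(G),B)$ (Proposition~\ref{prop:Kubota}) one obtains $\gamma\, KK(C^*(G),\mathcal{Q})\subset KK(C^*(G),\mathcal{Q})_{qd}$. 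Now every quasidiagonal $KK$-class yields, by compression, a \emph{pair} of ucp asymptotic homomorphisms $\varphi_n^{(0)},\varphi_n^{(1)}$ whose difference of bundle classes computes $\nu_Y(x)$ exactly (equation~\eqref{eqn:important}). Surjectivity of $\nu$ onto $RK^0(BG;\Q)$ then forces $RK^0(BG;\Q)=\Q$ once (weak) matricial stability replaces the $\varphi_n^{(i)}$ by genuine representations. Note two further differences from your outline: the paper works on the $K^0$-side (showing the whole of $RK^0(BG;\Q)$ collapses) rather than pairing against a single $K$-homology class, and it uses Cuntz \emph{pairs} rather than a single quasi-representation with its reduced class $[E_{\varphi_n}]-k_n[1]$.
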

For the proof, we use quasidiagonality and KK-theory methods  \cite{AA}, \cite{Kubota2} in conjunction with the (rational) surjectivity of the dual assembly map \cite{Kas:inv}, \cite{Tu:gamma} to show that the rational K-theory of $BG$ is (locally) almost flat.
  Once that is achieved, Theorem~\ref{thm:1} is essentially a cohomological expression of the fact that a group $G$ cannot be matricially stable if  $BG$ admits almost flat K-theory classes which are not flat. Indeed, the approximate monodromy representations associated to these nontrivial almost flat classes, \cite{CGM:flat}, are not perturbable to genuine representations of $G$, due to continuity properties of the approximate monodromy correspondence \cite[Thm.3.3]{DadCarrion:almost_flat}.

We also consider completely positive  (cp) versions of the properties MF and matricial stability and show that the same cohomological obstructions  persists, by refining the argument outlined above.
A unital map $\varphi:G \to M_n$ is completely positive definite if for any finite set of elements $s_1,..,,s_r$ of $G$,  the matrix $(\varphi(s_i^{-1}s_j))$ is positive. By a theorem of Naimark, this is the case if and only if $\varphi$ extends to a unital completely positive (ucp) map $C^*(G)\to M_n$ on the $C^*$-algebra of $G$.

A group $G$ is \emph{weakly quasidiagonal} if there is a sequence $\{\varphi_n:G \to M_{k_n}\}_{n\in \N}$  of unital completely positive definite maps   which satisfies the conditions \eqref{ah} and  \eqref{sep}, see also Definition~\ref{def:wqd}.
A group $G$ is \emph{weakly matricially stable} if for any sequence $\{\varphi_n:G \to M_{k_n}\}_{n\in \N}$  of unital completely positive definite maps which satisfies the condition \eqref{ah}, there are two sequences of homomorphisms \mbox{$\{\pi^{(i)}_n:G \to U(k^{(i)}_n)\}_{n\in \N}$}, $i=0,1$ such that
$\lim_{n\to \infty} \|\varphi_n(s)\oplus\pi^{(0)}_n(s)-\pi^{(1)}_n(s)\|=0$ for all $s\in G.$

 It is  clear from definitions that MF $\Rightarrow$ weak quasidiagonality and matricial stability $\Rightarrow$ weak matricial stability.
A group $G$ is \emph{quasidiagonal}  if it is isomorphic to a subgroup of the unitary group of a quasidiagonal $C^*$-algebra, see Definition~\ref{def:qd}.
Quasidiagonal groups are  weakly quasidiagonal. A group is maximally almost periodic (MAP) if its finite dimensional  unitary representations separate the points. MAP groups and residually amenable groups are quasidiagonal by \cite{TWW}.
 It is immediate from definition that a matricially stable MF-group must be MAP. In view of this discussion, it follows that  Theorem~\ref{thm:1} is a consequence
of a more general result which shows that nonvanishing of the rational even cohomology also obstructs weak matricial stability:
\begin{theorem}\label{thm:2}
Let $G$ be a countable discrete weakly quasidiagonal group  that admits a $\gamma$-element.
  If $G$ is weakly matricially stable, then $G$ is MAP and $H^{2k}(G,\Q)=0$ for all $k\geq 1$.
\end{theorem}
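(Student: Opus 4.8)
The plan is to prove the two conclusions of Theorem~\ref{thm:2} separately. The assertion that $G$ is MAP should follow directly from the definitions: let $R\trianglelefteq G$ be the intersection of the kernels of all finite-dimensional unitary representations of $G$, and fix $s\in R$. Since $G$ is weakly quasidiagonal, choose a sequence $\{\varphi_n\colon G\to M_{k_n}\}$ of unital completely positive definite maps satisfying \eqref{ah} and the separation property \eqref{sep}. Weak matricial stability then provides homomorphisms $\pi^{(i)}_n\colon G\to U(k^{(i)}_n)$, $i=0,1$, with $\|\varphi_n(s)\oplus\pi^{(0)}_n(s)-\pi^{(1)}_n(s)\|\to 0$. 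As $s$ lies in the kernel of every finite-dimensional representation, $\pi^{(0)}_n(s)$ and $\pi^{(1)}_n(s)$ are identity matrices, so the identity blocks cancel and $\|\varphi_n(s)-1\|\to 0$; by \eqref{sep} this forces $s=e$. Hence $R=\{e\}$ and $G$ is MAP.

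For the vanishing of $H^{2k}(G,\Q)$ I would combine two statements, in their local forms over finite subcomplexes: (i) the rational $K$-theory of $BG$ is almost flat; and (ii) if $G$ is weakly matricially stable, then every almost flat $K$-theory class of $BG$ is flat, i.e.\ associated to a genuine finite-dimensional unitary representation of $G$. Granting these, I argue by contraposition: if $H^{2k}(G,\Q)\neq 0$ for some $k\ge 1$, a nonzero class is detected on some finite subcomplex $Y\subseteq BG$, and since the Chern character induces an isomorphism $K^0(Y)\otimes\Q\cong\bigoplus_j H^{2j}(Y,\Q)$, there is $\xi\in K^0(Y)\otimes\Q$ whose Chern character has a nonzero component in a positive even degree. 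By (i) the class $\xi$ is almost flat, so by (ii) it would have to be flat; but a flat unitary bundle has vanishing rational Chern classes in positive degrees (Chern--Weil), so $\xi$ cannot be flat — a contradiction. Hence $H^{2k}(G,\Q)=0$ for all $k\ge 1$.

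Statement (i) is the step I expect to be the main obstacle, and I would prove it along the lines indicated by the cited works. Because $G$ admits a $\gamma$-element, the dual assembly map is rationally surjective \cite{Kas:inv,Tu:gamma}, so every rational $K$-theory class of $BG$ can (locally) be represented by $KK$-theoretic data pulled back from $C^*(G)$ through the dual-Dirac morphism; the weak quasidiagonality of $G$, combined with the quasidiagonality-and-$KK$ arguments of \cite{AA,Kubota2}, then realizes this data by unital completely positive definite quasi-representations of $G$ of arbitrarily small multiplicative defect, which is precisely what almost flatness of the class amounts to. The delicate points here are to make the $\gamma$-element machinery and the quasidiagonal approximations interact correctly, and to run the whole construction in the completely positive category, so that the approximate monodromy data produced below are genuine sequences of unital completely positive definite maps on $G$ itself, not merely on the fundamental groups of the finite subcomplexes involved.

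Statement (ii) is the soft part, resting on the results recalled in the introduction. Given an almost flat class $\eta\in K^0(BG)$, the approximate monodromy construction of \cite{CGM:flat} attaches to it a sequence $\{\varphi_n\colon G\to M_{k_n}\}$ of unital completely positive definite maps satisfying \eqref{ah}. If $G$ is weakly matricially stable, this sequence is — after adding a genuine representation — perturbable to a sequence of honest homomorphisms, and by the continuity of the approximate monodromy correspondence \cite[Thm.~3.3]{DadCarrion:almost_flat} such a perturbation forces $\eta$ to be flat. Together with (i) and the Chern character computation this proves Theorem~\ref{thm:2}; Theorem~\ref{thm:1} then follows by contraposition, since an MF group is weakly quasidiagonal and matricial stability implies weak matricial stability, so that $H^{2k}(G,\Q)\neq 0$ for some $k\ge 1$ precludes matricial stability.
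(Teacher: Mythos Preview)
Your MAP argument is fine and is exactly the content of the paper's Lemma~\ref{lemma:qdmap}(ii). Your overall strategy for the cohomology vanishing is also essentially the paper's, but there is a genuine logical gap in how you set up step~(i).

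The machinery of \cite{AA} and \cite{Kubota2} that you invoke for step~(i) requires $G$ to be \emph{quasidiagonal}, not merely weakly quasidiagonal: one needs the factorization of $q_G\colon C^*(G)\to C^*_r(G)$ through a quasidiagonal $C^*$-algebra (Proposition~\ref{cor:qd-embed}) in order to conclude that $\gamma KK(C^*(G),B)\subset KK(C^*(G),B)_{qd}$ (Theorem~\ref{cor:qd-embed-kk}). Weak quasidiagonality does not obviously furnish such a factorization. The paper closes this gap by \emph{using the MAP conclusion you already proved}: since $G$ is MAP it is quasidiagonal (Remark~\ref{rem:c}(i)), and only then does the quasidiagonal-$KK$ argument go through. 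You prove MAP first but then do not feed it back into step~(i); as written, your step~(i) rests on weak quasidiagonality alone, which is insufficient.

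There is also a structural difference worth noting. Rather than separating the argument into ``almost flat'' and then ``almost flat $+$ weakly matricially stable $\Rightarrow$ flat'' via approximate monodromy and the continuity theorem of \cite{DadCarrion:almost_flat}, the paper works directly with the Mishchenko line bundle $P$ and the formula $\nu_Y(x)=[P]\otimes_{C^*(G)}x$. The quasidiagonal Cuntz pair representing $x$ is compressed to ucp asymptotic morphisms $\varphi_n^{(i)}$ (automatically ucp, being compressions of $*$-representations), weak matricial stability perturbs these to genuine representations, and the resulting projections $e_n^{(r)},f_n^{(r)}$ are visibly flat via constant cocycles. This direct route completely sidesteps the delicacy you flagged about producing completely positive definite approximate monodromy data, so your worry there is real but the paper's approach dissolves it rather than confronting it.
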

In Example~\ref{rem:contraex}, we indicate a 3-dimensional Bieberbach group which is weakly matricially stable but not matricially stable.
 A linear group is a subgroup of $GL_n(F)$ where $F$ is a field.
\begin{corollary}\label{cor:linear}
  If $G$ is a countable linear group such that $H^{2k}(G,\Q)\neq 0$ for some $k\geq 1$,
  then $G$ is not weakly matricially stable.
\end{corollary}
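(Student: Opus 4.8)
The plan is to deduce Corollary~\ref{cor:linear} directly from Theorem~\ref{thm:2}: it suffices to check that every countable linear group $G$ satisfies the two standing hypotheses of that theorem, namely that $G$ is weakly quasidiagonal and that $G$ admits a $\gamma$-element. Granting this, if such a $G$ were weakly matricially stable, then Theorem~\ref{thm:2} would force $H^{2k}(G,\Q)=0$ for all $k\geq 1$, contradicting the hypothesis $H^{2k}(G,\Q)\neq 0$ for some $k\geq 1$; hence $G$ is not weakly matricially stable. So the whole proof is a verification of hypotheses.

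For the $\gamma$-element: by the theorem of Guentner, Higson and Weinberger on the Novikov conjecture for linear groups, every countable linear group is exact, hence uniformly embeddable in a Hilbert space, and therefore admits a $\gamma$-element by the results quoted just after Theorem~\ref{thm:1}. For weak quasidiagonality: write $G$ as the directed union $G=\bigcup_i G_i$ of its finitely generated subgroups, each of which is again linear. By Malcev's theorem, every finitely generated linear group over a field (of arbitrary characteristic) is residually finite, hence locally embeddable into finite groups; since local embeddability into finite — and a fortiori into amenable — groups is preserved under directed unions (any finite subset of $G$ already lies in some $G_i$), the group $G$ is LEA. By Tikuisis--White--Winter \cite{TWW}, LEA groups are MF, and MF groups are weakly quasidiagonal by the remark following the definition of weak matricial stability. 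This verifies both hypotheses, and the corollary follows.

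I do not expect a genuinely hard step here: once Theorem~\ref{thm:2} is available the argument is essentially formal, and the only place where a line of justification is really needed is the passage from finitely generated linear subgroups to the countable group $G$ — both for uniform embeddability (if one prefers to derive the countable case from the finitely generated one) and for the LEF/LEA property obtained from Malcev's theorem — which is routine because both properties are local and $G$ is an increasing union of finitely generated pieces. Everything else is a direct citation of Theorem~\ref{thm:2}, of \cite{TWW}, and of the Guentner--Higson--Weinberger exactness theorem for linear groups.
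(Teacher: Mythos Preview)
Your proof is correct and follows essentially the same route as the paper: verify that countable linear groups satisfy the hypotheses of Theorem~\ref{thm:2} via Guentner--Higson--Weinberger (for the $\gamma$-element) and Malcev's theorem applied to finitely generated subgroups (for weak quasidiagonality), then contrapose. The only minor difference is that the paper passes to the union at the level of weak quasidiagonality using Proposition~\ref{prop:cp-MF}, whereas you first pass to the union at the level of LEA/LEF (which is trivially local), then invoke \cite{TWW} to get MF, and finally use MF $\Rightarrow$ weakly quasidiagonal; both routes are equally short and valid.
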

More examples of groups which are not matricially stable are given in Section~\ref{Sect:ex}.
It will follow from the proof of Theorem~\ref{thm:2} that if the classifying space $BG$ is a finite simplicial complex and $G$ is weakly matricially stable then $K^0(BG)$ must be generated by flat bundles, Remark~\ref{rem:flat}.
If  $\gamma=1$ (as is the case for  groups with Haagerup's property \cite{HigKas:BC}),  the Baum-Connes map is an isomorphism and so Theorem~\ref{thm:2} implies that $K_0(C^*(G))\otimes \Q\cong\Q$ whenever $G$ is torsion free.
In contrast, while any finite group $G$ is matricially stable,  $K^0(C^*(G))\cong K_0(C^*(G))\cong \Z^n,$ where $n$ is the number of equivalence classes of  irreducible representations of $G$. Kasparov introduced the ring $KK_G(\C,\C)\cong K^0(C^*(G))$ as a generalization of the representation ring of finite (compact) groups. Let us denote by $R(G)_{fin}$ the subring of $K^0(C^*(G))$ generated by finite dimensional unitary representations.
For the sake of simplicity, we will illustrate how one can obtain somewhat stronger obstructions to matrix stability  in the case of  groups with Haagerup's property.
\begin{theorem}\label{thm:3}
Let $G$ be a countable discrete weakly quasidiagonal group with  Haagerup's property. Suppose that $G$ is weakly matricially stable.
Then $G$ is MAP and:

(i) If $K_*(C^*(G))$ is finitely generated, then $K^0(C^*(G))=R(G)_{fin}$

(ii) If $G$ is torsion free, then  $K_0(C^*(G)) \otimes \Q \cong \Q$.

(iii) If  $BG$ is a finite simplicial complex, then  $K^0(BG)$ is generated by flat bundles.
\end{theorem}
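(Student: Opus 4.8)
\medskip\noindent
\emph{Proof strategy.} The plan is to obtain Theorem~\ref{thm:3} as a refinement of Theorem~\ref{thm:2} and Remark~\ref{rem:flat} by feeding in the extra structure available for groups with Haagerup's property. The one new external ingredient is the Higson--Kasparov theorem \cite{HigKas:BC}: such a group is a-T-menable, hence $\gamma=1$ in $KK_G(\C,\C)$ and $G$ is $K$-amenable; consequently the Baum--Connes assembly map $\mu\colon K^G_*(\EG)\to K_*(C^*_r(G))$ is an isomorphism and the canonical surjection $C^*(G)\to C^*_r(G)$ is a $KK$-equivalence, so that $K_*(C^*(G))\cong K^G_*(\EG)$. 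Moreover, because $\gamma=1$ the dual assembly map that enters only rationally in the proof of Theorem~\ref{thm:2} is now (integrally) surjective --- indeed an isomorphism --- so the conclusion reached there improves from ``$K^*(BG)\otimes\Q$ is locally almost flat'' to ``$K^*(BG)$ is locally almost flat'' whenever the relevant $K$-groups are finitely generated; this is the mechanism behind parts (i) and (iii). That $G$ is MAP is already part of the conclusion of Theorem~\ref{thm:2}.

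For (ii), Theorem~\ref{thm:2} gives $H^{2k}(G,\Q)=0$ for all $k\ge 1$, whence $H_{2k}(G,\Q)=0$ in the same range by the universal coefficient theorem over $\Q$. Since $G$ is torsion free we may take $\EG=EG$, so $K^G_*(\EG)$ is the ordinary $K$-homology of $BG$, and the Chern character gives $K_0(BG)\otimes\Q\cong\bigoplus_{k\ge 0}H_{2k}(BG,\Q)=H_0(G,\Q)\cong\Q$; transporting this along the isomorphisms of the previous paragraph yields $K_0(C^*(G))\otimes\Q\cong\Q$, the copy of $\Q$ being that of $H_0(BG,\Q)$ (this is already noted in the discussion preceding the theorem). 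For (iii), a group with Haagerup's property has a $\gamma$-element, is weakly quasidiagonal and weakly matricially stable by hypothesis, and has $BG$ a finite simplicial complex, so the hypotheses of Remark~\ref{rem:flat} are met and the statement is precisely its conclusion; concretely, the almost flat $K$-theory machinery together with $\gamma=1$ exhibits the finitely generated group $K^0(BG)$ as generated by classes carried by sequences of unital maps $G\to M_{k_n}$ of the type appearing in the definition of weak quasidiagonality, and weak matricial stability, via the continuity of the approximate monodromy correspondence \cite[Thm.~3.3]{DadCarrion:almost_flat} exactly as in the proof of Theorem~\ref{thm:2}, forces each such generator to be the class of a genuine flat bundle.

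For (i), the hypothesis that $K_*(C^*(G))$ is finitely generated transports, via the Higson--Kasparov isomorphisms, to finite generation of $K^G_*(\EG)$; this permits one to treat finitely many classes at once and thereby to realize every class of $K_0(C^*(G))=KK_G(\C,\C)$ by a sequence of (completely positive definite) asymptotic homomorphisms of $G$ --- here weak quasidiagonality provides the quasi-representations and $\gamma=1$ guarantees that they capture all of $K_0(C^*(G))$. Running the continuity argument of \cite[Thm.~3.3]{DadCarrion:almost_flat} together with weak matricial stability, exactly as in Theorem~\ref{thm:2}, identifies each such class with an integral combination of finite dimensional unitary representations, i.e.\ places it in $R(G)_{fin}$; the inclusion $R(G)_{fin}\subseteq K^0(C^*(G))$ is automatic, giving equality. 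I expect this last, \emph{integral}, identification to be the main obstacle: a priori the machinery yields only $K^0(C^*(G))\otimes\Q=R(G)_{fin}\otimes\Q$, so that the finitely generated torsion group $K^0(C^*(G))/R(G)_{fin}$ still has to be shown to vanish, and ruling out a nontrivial finite discrepancy requires either carrying out the almost flat construction with $\Z/m$ coefficients or arguing that, since $\gamma=1$ and $K_0$ is finitely generated, the dual assembly isomorphism already produces almost flat representatives at the level of $K^0$ rather than of $K^0\otimes\Q$. Parts (ii) and (iii), by contrast, are comparatively direct consequences of Theorem~\ref{thm:2} together with the Chern character and with Remark~\ref{rem:flat}, respectively.
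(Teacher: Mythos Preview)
Your treatment of parts (ii) and (iii) is correct and matches the paper: (ii) is exactly the Chern character computation combined with Higson--Kasparov and Theorem~\ref{thm:2}, and (iii) is precisely Remark~\ref{rem:flat}(i).

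For part (i), you have correctly located the real difficulty --- passing from rational to integral information --- and your instinct that $\Z/m$-coefficients are needed is right. But you have not supplied the mechanism, and the route you sketch (via $\EG$, almost flat bundles on $BG$, and the continuity argument of \cite{DadCarrion:almost_flat}) is not how the paper closes the gap. The paper instead works entirely at the $C^*$-algebra level via a separate result, Theorem~\ref{thm-gen}: if $A$ is separable, unital, K-quasidiagonal, satisfies the UCT, has finitely generated $K_*(A)$, and is weakly matricially stable, then $K^0(A)$ is generated by finite-dimensional representations. The key tool there is the universal multicoefficient theorem $K^0(A)\cong \Hom_{\Lambda}(\uK(A),\uK(\C))$, where $\uK$ is total K-theory (K-theory with all $\Z/p$ coefficients); finite generation reduces this to finitely many Bockstein operations, and one then shows that a quasidiagonal Cuntz pair for $x\in K^0(A)$, after applying weak matricial stability, induces the \emph{same} $\Lambda$-morphism as a difference of genuine finite-dimensional representations, whence $x$ itself lies in $R(G)_{fin}$ by the UMCT isomorphism.

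To apply this to $A=C^*(G)$ one needs two inputs you omit. First, one must know that $C^*(G)$ satisfies the UCT; this is Tu's theorem \cite{Tu:gamma} for groups with Haagerup's property, and it is indispensable --- without it there is no UMCT and no way to detect torsion in $K^0$. Second, one must check that $C^*(G)$ is K-quasidiagonal (not merely that $G$ is quasidiagonal): this follows because MAP implies quasidiagonal, so $q_G$ factors through a quasidiagonal algebra $D$, and since $q_G$ is a $KK$-equivalence by Higson--Kasparov, $q_D^*$ is surjective on $K^0$, forcing $K^0(C^*(G))=K^0(C^*(G))_{qd}$. Your proposal gestures at K-quasidiagonality but does not isolate the UCT step, which is the ingredient that actually resolves the torsion ambiguity you flagged.
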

 If  a group $G$ with Haagerup's property admits a $G$-compact model of $\underline{E}G$ then $K_*(C^*(G))$ is finitely generated, see \cite{Proietti}. If we assume moreover that $BG$ is a finite simplicial complex, then $K^0(C^*(G))=R(G)_{fin}$ if and only if $K^0(BG)$ is generated by flat bundles, see Remark~\ref{rem:flat}.
\begin{theorem}\label{thm:amen}
  Let $G$ be an amenable countable discrete such that $K_*(C^*(G))$ is finitely generated.
  Then $G$ is weakly matricially stable if and only if $G$ is MAP and $K^0(C^*(G))=R(G)_{fin}$.
\end{theorem}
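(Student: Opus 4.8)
The plan is to deduce the forward implication directly from Theorem~\ref{thm:3} and to prove the converse by combining a realizability statement for $K$-homology classes with a stable uniqueness theorem for $*$-homomorphisms into the finite corona algebra $\mathcal{Q}:=\prod_n M_{k_n}/\bigoplus_n M_{k_n}$. For the ``only if'' direction, note that an amenable group is locally embeddable into amenable groups, hence MF by \cite{TWW} and a fortiori weakly quasidiagonal, and it has Haagerup's property; since $K_*(C^*(G))$ is finitely generated, Theorem~\ref{thm:3}(i) then shows that $G$ is MAP and $K^0(C^*(G))=R(G)_{fin}$. So only the ``if'' direction needs genuinely new work.

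For that direction, write $A=C^*(G)=C^*_r(G)$; because $G$ is amenable this algebra is nuclear and satisfies the UCT, it is quasidiagonal by \cite{TWW}, and the MAP hypothesis together with amenability makes it residually finite dimensional; moreover $K_*(A)$ is finitely generated. A unital ucp asymptotic homomorphism $\varphi=(\varphi_n)\colon A\to M_{k_n}$ induces a unital $*$-homomorphism $\bar\varphi\colon A\to\mathcal{Q}$, and by the Choi--Effros lifting theorem for the nuclear algebra $A$ every unital $*$-homomorphism $A\to\mathcal{Q}$ arises this way. Two invariants of $\bar\varphi$ will drive the proof. First, its class in $KK(A,\mathcal{Q})$: the six-term sequence attached to $0\to\bigoplus_n M_{k_n}\to\prod_n M_{k_n}\to\mathcal{Q}\to 0$, together with the UCT and finite generation of $K_*(A)$, identifies $KK(A,\mathcal{Q})$ with $\prod_n K^0(A)/\bigoplus_n K^0(A)$, so $\bar\varphi$ determines a sequence $(\xi_n)_n$ of $K$-homology classes of $A$, well defined modulo finitely supported sequences; unitality of $\varphi_n$ forces $\langle\xi_n,[1_A]\rangle=k_n$, and on a fixed finite generating set of $K_0(A)$ the classes $\xi_n$ take integer values of size $O(k_n)$. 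Second, the traces $\tau_\omega\circ\bar\varphi$ obtained from the limit traces $\tau_\omega=\lim_\omega\tfrac1{k_n}\tr$ on $\mathcal{Q}$, $\omega$ a free ultrafilter.

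The converse now splits into an existence step and a uniqueness step. Existence: since tensor products of finite dimensional representations are finite dimensional, $R(G)_{fin}$ is the \emph{subgroup} of $K^0(A)$ generated by their classes, so the hypothesis $K^0(A)=R(G)_{fin}$ says exactly that every $\xi_n$ is a difference $[\rho^{(1)}_n]-[\rho^{(0)}_n]$ of classes of genuine finite dimensional representations; the dimension constraints above force $\dim\rho^{(1)}_n-\dim\rho^{(0)}_n=k_n$ and can be met with $\dim\rho^{(i)}_n=O(k_n)$, uniformly in $n$ by finite generation of $K^0(A)$. Replacing $(\rho^{(0)}_n,\rho^{(1)}_n)$ by $(\rho^{(0)}_n\oplus\gamma_n,\rho^{(1)}_n\oplus\gamma_n)$ changes neither $\xi_n$ nor the dimension difference, and since $G$ is amenable -- so the GNS representation of every trace of $A$ is hyperfinite, hence approximately finite dimensional -- one can choose the padding representations $\gamma_n$ so that the resulting sequences of genuine representations $\pi^{(0)},\pi^{(1)}$ also match the limit-trace data of $\bar\varphi$; at this point $\bar\varphi\oplus\bar\pi^{(0)}$ and $\bar\pi^{(1)}$ are unital $*$-homomorphisms $A\to\mathcal{Q}$ (the block sizes match because $\varphi_n$ is unital) with the same class in $KK(A,\mathcal{Q})$ and the same traces. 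Uniqueness: one then invokes a Weyl--von Neumann--Voiculescu / stable uniqueness theorem for unital $*$-homomorphisms into $\mathcal{Q}$ -- in the spirit of the corona-algebra classification behind \cite[Thm.~3.3]{DadCarrion:almost_flat} and the stable uniqueness theorems of Dadarlat and Eilers, using that $A$ is separable, nuclear and UCT -- to conclude that, after adding a sufficiently large diagonalizable absorbing $*$-homomorphism $\bar\Theta$ built from the residually finite dimensional structure of $A$, the two maps become unitarily equivalent in $\mathcal{Q}$. Absorbing $\bar\Theta$ into $\pi^{(0)}$ and $\pi^{(1)}$ and lifting the implementing unitary to a sequence of unitaries in the matrix algebras yields sequences of homomorphisms with $\lim_n\|\varphi_n(s)\oplus\pi^{(0)}_n(s)-\pi^{(1)}_n(s)\|=0$ for all $s\in G$, i.e.\ $G$ is weakly matricially stable.

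The main obstacle is the passage from invariants of $A$ itself to invariants of the sequence algebra $\mathcal{Q}$ with \emph{uniform} control. The hypotheses $K^0(A)=R(G)_{fin}$ and ``$G$ is MAP'' concern $A$, but one must realize the entire sequence $(\xi_n)_n$ by genuine finite dimensional representations with matrix sizes tied to $k_n$, and at the same time match $\tau_\omega\circ\bar\varphi$ for \emph{all} free ultrafilters $\omega$, before feeding the result into a stable uniqueness theorem for the finite corona $\mathcal{Q}$ -- which, unlike the corona of a stable algebra, retains tracial data and so makes the trace matching unavoidable. Finite generation of $K_*(A)$ is precisely what makes the $K$-theoretic uniformization work, amenability is what makes the tracial matching work, and splicing the two together inside one uniqueness argument is the technical crux.
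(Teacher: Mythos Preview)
Your forward direction matches the paper exactly. For the converse, your overall architecture---realize the termwise $K$-homology classes by differences of genuine finite dimensional representations, then invoke a stable uniqueness theorem to conclude approximate unitary equivalence after stabilization---is also the paper's strategy (carried out in Theorem~\ref{thm:amen-alg}). However, two points of your execution diverge from the paper, and one of them is a genuine gap.

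\textbf{The gap: the $KK$ identification.} You assert that the six-term sequence together with the UCT and finite generation of $K_*(A)$ identifies $KK(A,\mathcal{Q})$ with $\prod_n K^0(A)/\bigoplus_n K^0(A)$. This is not justified and is delicate when $K_*(A)$ has torsion: the Ext terms in the UCT for $KK(A,\prod_n M_{k_n})$ and $KK(A,\mathcal{Q})$ do not obviously reduce to products/quotients of $K^0(A)$, and matching the induced maps on ordinary $K$-theory termwise need not force equality in $KK(A,\mathcal{Q})$. The paper avoids this entirely by working with the UMCT and total $K$-theory $\underline{K}'$ (finitely many coefficients suffice since $K_*(A)$ is finitely generated): one shows directly that the comparison map $J:\underline{K}(\mathcal{Q})\to \prod_n \underline{K}(M_{r_n})/\bigoplus_n \underline{K}(M_{r_n})$ is \emph{injective}, so that termwise agreement $(\phi_n)_\sharp\equiv(\psi_n)_\sharp$ on $\underline{K}'(A)$ forces $\Phi_*=\Psi_*$ on $\underline{K}'(\mathcal{Q})$ and hence $[\Phi]=[\Psi]$ in $KK(A,\mathcal{Q})$. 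That injectivity argument (via the B\"ockstein sequence and the explicit description of $K_0(\prod_{k\geq n} M_{r_k})$) is the technical core you are missing.

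\textbf{The unnecessary detour: traces.} Your insistence that one must match $\tau_\omega\circ\bar\varphi$ for all free ultrafilters, because ``$\mathcal{Q}$ retains tracial data,'' is misplaced. The paper uses the stable uniqueness theorem of Dadarlat--Eilers \cite[Thm.~4.3]{DadEil:AKK}, which requires only $[\Phi]=[\Psi]$ in $KK(A,\mathcal{Q})$ (noting that $A$ exact and $\mathcal{Q}$ nuclear give $KK=KK_{nuc}$) together with a separating sequence $(\gamma_k)$ of finite dimensional representations of the RFD algebra $A$, each repeated infinitely often; no tracial hypothesis enters. The stabilization by $\gamma_k$ absorbs whatever tracial discrepancy there is. So the entire trace-matching step, and the amenability-specific justification you give for it, can be deleted; what remains is precisely the paper's argument once the UMCT is used in place of the UCT.
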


In a recent paper, Ioana, Spaas and Wiersma \cite{Ioana-S-W:coho} use nonvanishing of 2-cohomology groups in conjunction with the relative Property (T) to exhibit obstructions to cp-lifting for full group $C^*$-algebras.
In particular it follows from their arguments that if $G=\Z^2\rtimes SL_2(\Z)$ (or if $G$ is any other group that satisfies the assumptions of their Theorem A) then there is a unital $*$-homomorphism $C^*(G) \to  \prod_n M_n/\bigoplus_n M_n$ which does not even have a ucp lifting  and hence $G$ is far from being matricially stable.
However, it should be noted that the absence of a ucp lifting is only one aspect of nonstability. Indeed, Corollary~\ref{cor:linear} shows that  there are $*$-homomorphisms $C^*(\Z^2\rtimes SL_2(\Z)) \to  \prod_n M_n/\bigoplus_n M_n$ which have ucp liftings but do not  lift to $*$-homomorphisms.

In Section~\ref{3} we discuss some basic aspects of MF,  weakly quasidiagonal and quasidiagonal groups as these are the classes of groups that concern our main results.
The proof of Theorem~\ref{thm:2} is given in Section~\ref{4} and
the proofs of Theorems~\ref{thm:3} and \ref{thm:amen} are given in Section~\ref{sec:msa}.
We rely on ideas and techniques developed in connection with the strong Novikov conjecture and the Baum-Connes conjecture \cite{BCH}, \cite{Kas:inv}, \cite{Kasparov-Skandalis-kk}, \cite{Kasparov-Skandalis-cr}, \cite{CGM:flat}, \cite{Yu:BC}, \cite{HigKas:BC}, \cite{Chabert-Echterhoff-documenta}, \cite{Ska-Tu-Yu:BC}, \cite{Tu:gamma}.
The statement of Theorem~\ref{thm:1} for groups $G$ with compact classifying space and $C^*(G)$  quasidiagonal  can be derived from our earlier paper \cite{AA}. In view of \cite[Prop.3.2]{AA}, the papers \cite{AA}, \cite{BB}, \cite{Carrion-Dadarlat}, \cite{Dad-Pennig-homotopy-symm}, \cite{Dad-Pennig-connective}  implicitly exhibit  classes of groups which are not matricially stable.
Just like in \cite{AA}, we employ here the concept of quasidiagonality to produce local approximations of the dual assembly map, but additionally, we use a key idea of Kubota \cite{Kubota2} and consider quasidiagonal $C^*$-algebras which are intermediate between  the full and the reduced group $C^*$-algebras  to enlarge the class of groups for which these approximation methods are applicable.
In Section~\ref{5},
we revisit briefly the KK-theoretic  approach to almost flat K-theory that we introduced in \cite{AA}. In view of \cite{Kubota2}, this method extends now to the class of quasidiagonal groups. The problem of constructing almost flat classes on classifying spaces was considered first by Connes, Gromov and Moscovici \cite{CGM:flat} and Gromov in \cite{Gromov:reflections, Gromov:curvature}. For now, it seems that the use of quasidiagonal  KK-theory classes remains the most effective approach to this problem.

The class of groups that admit a $\gamma$-element is very large as shown by work of Kasparov and Skandalis \cite{Kasparov-Skandalis-kk}, \cite{Kasparov-Skandalis-Annals}.
Moreover, J.-L.Tu has shown that if $G$ is a discrete countable group that admits a uniform embedding in a Hilbert space, then $G$ admits a $\gamma$-element \cite{Tu:gamma}.
The amenable groups, or more generally, the groups with Haagerup's property are uniformly embeddable in a Hilbert  space \cite{Valette-book}  and so are the linear groups as shown by Guentner, Higson and Weinberger \cite{GuenHW}.
The class of groups that admit a uniform embedding in a Hilbert space
 is closed  under  subgroups  and  products,  direct  limits,  free  products with amalgam, and extensions by exact groups \cite{Dad-Guen}.

 The author is grateful to the referee of this paper for useful comments.
\section{Examples}\label{Sect:ex}
In this section we exhibit  classes of groups which are not (weakly) matricially stable by virtue of Theorems~\ref{thm:1} and ~\ref{thm:2}.
In view of our earlier discussion, it is not surprising that the list of matricially stable groups cannot be too extensive. It was shown in \cite{ESS} that all finitely generated virtually free groups are matricially stable. In particular it follows that $SL_2(\Z)$ is matricially stable. Moreover, it was established in  \cite{ESS} that the only crystallographic groups that are matricially stable are the two line groups $\Z$ and $\Z/2*\Z/2$ and the 12 wallpaper groups that contain at least one reflection or one glide reflection.

\noindent\textbf{Proof of Corollary~\ref{cor:linear}}.
 Any  linear group $G$ is uniformly embeddable in a Hilbert space by \cite{GuenHW}.
Write $G$ as the increasing union of a sequence  $(G_n)_n$ of finitely generated subgroups.
Each $G_n$ is residually finite by a classic theorem of Malcev \cite[6.4.13]{Brown-Ozawa}. It follows then that $G$ is weakly quasidiagonal by Proposition~\ref{prop:cp-MF}.
 We conclude the proof  by applying Theorem~\ref{thm:2}.\qed
\begin{corollary}\label{cor:hyperbolic}
Let $G$ be a hyperbolic residually finite group with $H^{2k}(G;\Q)\neq 0$ for some $k\geq 1$.
Then $G$ is not weakly matricially stable.
\end{corollary}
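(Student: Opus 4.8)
The plan is to deduce this corollary directly from Theorem~\ref{thm:2}, so the task reduces to verifying its two hypotheses for a residually finite hyperbolic group $G$: that $G$ admits a $\gamma$-element, and that $G$ is weakly quasidiagonal. Once both are in place, Theorem~\ref{thm:2} says that a weakly matricially stable such $G$ would have $H^{2k}(G;\Q)=0$ for all $k\geq 1$, contradicting the hypothesis $H^{2k}(G;\Q)\neq 0$; hence $G$ is not weakly matricially stable.

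First I would establish the $\gamma$-element. A word-hyperbolic group acts amenably on its Gromov boundary, hence is exact and in particular uniformly embeddable in a Hilbert space; by Tu's theorem \cite{Tu:gamma} every countable discrete group admitting such an embedding admits a $\gamma$-element. (Alternatively one may quote Kasparov--Skandalis \cite{Kasparov-Skandalis-kk, Kasparov-Skandalis-Annals} directly for hyperbolic groups.) Next I would check weak quasidiagonality: since $G$ is residually finite it is in particular residually amenable and maximally almost periodic, so $G$ is quasidiagonal by \cite{TWW}, and quasidiagonal groups are weakly quasidiagonal. Note that $G$ is finitely generated by the very definition of hyperbolicity, so one does not even need the "increasing union of finitely generated subgroups" device used in the proof of Corollary~\ref{cor:linear}; if preferred, however, that route also works verbatim via Proposition~\ref{prop:cp-MF} together with the uniform embeddability just recorded.

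There is no real obstacle here: the corollary is a packaging of Theorem~\ref{thm:2} once the two hypotheses are checked, and the only point that deserves a precise citation is the existence of a $\gamma$-element for hyperbolic groups (equivalently, their uniform embeddability in Hilbert space). I would conclude with the one-line contrapositive argument above.
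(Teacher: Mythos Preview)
Your proposal is correct and follows essentially the same route as the paper: hyperbolic groups are exact, hence uniformly embeddable in Hilbert space, so they admit a $\gamma$-element by Tu, and residual finiteness gives (weak) quasidiagonality via \cite{TWW}, so Theorem~\ref{thm:2} applies. The paper's proof is terser---it leaves the weak quasidiagonality verification implicit---but the argument is the same.
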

\begin{proof} Hyperbolic groups are exact \cite{Brown-Ozawa} and hence they admit a $\gamma$-element by \cite{Tu:gamma}. Thus the statement follows from Theorem~\ref{thm:2}.
It is not known whether there exists a hyperbolic group which is not residually finite.
\end{proof}
The following generalizes a result obtained in \cite{ESS} for two-step nilpotent groups.
\begin{corollary}\label{cor:nilpotent} $\Z$ is the only nontrivial matricially stable torsion free finitely generated nilpotent group.
\end{corollary}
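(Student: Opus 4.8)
The plan is to reduce the corollary to Corollary~\ref{cor:linear}. Let $G$ be a nontrivial finitely generated torsion free nilpotent group and put $n=h(G)$ for its Hirsch length. I would first recall the classical structure theory: by Malcev, $G$ embeds into the group $UT_n(\Z)$ of unipotent upper triangular integer matrices, so $G$ is linear; and, again by Malcev, $G$ is a cocompact lattice in a unique simply connected nilpotent Lie group $N$ with $\dim N=n$. Hence $BG$ is homotopy equivalent to the closed nilmanifold $N/G$, which is parallelizable and in particular a closed orientable aspherical manifold of dimension $n$. Consequently $G$ is a rational Poincar\'e duality group of dimension $n$: every $H^k(G;\Q)$ is finite dimensional, $H^k(G;\Q)\cong H_{n-k}(G;\Q)$ for all $k$, and $H^n(G;\Q)\cong\Q$.

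Next I would handle the degenerate cases. If $n=0$ then $G$ is finite and torsion free, hence trivial. If $n=1$ then $G$ is infinite cyclic, and $\Z$ is matricially stable since it is free \cite{ESS}. So it remains to prove that every such $G$ with $n\geq 2$ is not matricially stable, and by Corollary~\ref{cor:linear} (together with the implication matricial stability $\Rightarrow$ weak matricial stability) it is enough to produce a nonzero class in $H^{2k}(G;\Q)$ for some $k\geq 1$.

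This is a short parity argument. If $n$ is even, take $2k=n$: then $H^n(G;\Q)\cong\Q\neq 0$. If $n$ is odd, then $n\geq 3$; since $G$ is an infinite finitely generated nilpotent group it surjects onto $\Z$, so $H^1(G;\Q)=\Hom(G,\Q)\neq 0$, and rational Poincar\'e duality gives $H^{n-1}(G;\Q)\cong H_1(G;\Q)\cong H^1(G;\Q)\neq 0$ with $n-1$ even and $n-1\geq 2$, so we may take $2k=n-1$. Either way $H^{2k}(G;\Q)\neq 0$ for some $k\geq 1$, whence $G$ is not weakly matricially stable by Corollary~\ref{cor:linear}, and hence in particular not matricially stable. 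Combined with the matricial stability of $\Z$, this establishes the corollary.

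The substantive inputs are only the classical structure theory of finitely generated torsion free nilpotent groups and Corollary~\ref{cor:linear}; there is no genuinely hard step. The one point that needs a little care is the odd-dimensional case, where no top-degree even class is directly available and one must instead transport $H^1$ across Poincar\'e duality --- which is exactly why it is used that $BG$ is an orientable closed manifold (equivalently, that $G$ is a rational $PD_n$ group), and not merely a finite complex.
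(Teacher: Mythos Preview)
Your proof is correct and takes a genuinely different route from the paper's. The paper argues via Lie algebra cohomology: it invokes the Malcev rational Lie algebra $L$ with $\dim L=h(G)$, the isomorphism $H^*(G;\Q)\cong H^*(L;\Q)$ due to Pickel (equivalently Nomizu), and then a result of Ado from \cite{Chevalley-Eilenberg} asserting that $H^2(L;\Q)\neq 0$ whenever $\dim L>1$. Thus the paper always locates the obstruction in degree~$2$. Your argument instead uses the nilmanifold model $N/G$ for $BG$ to obtain rational Poincar\'e duality, and then a parity split to find a nonzero even class in degree $n$ or $n-1$. Your approach is more self-contained on the topological side---it avoids both the Pickel/Nomizu comparison theorem and the nontrivial Lie-algebra-cohomology input---at the cost of not pinning down the obstruction degree. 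The paper's approach, by contrast, yields the sharper statement that already $H^2(G;\Q)\neq 0$ for every such $G$ with $h(G)\geq 2$.

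One small inaccuracy that does not affect the argument: it is not true that $G$ embeds in $UT_n(\Z)$ with $n=h(G)$ (for instance $\Z^3$ has Hirsch length~$3$ but does not embed in the Heisenberg group $UT_3(\Z)$, whose maximal abelian subgroups have rank~$2$). Malcev's theorem gives an embedding into $UT_m(\Z)$ for some $m$ depending on $G$, which is all you need for linearity and hence for invoking Corollary~\ref{cor:linear}.
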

\begin{proof}
  Let $G$ be a nontrivial torsion free  finitely generated nilpotent group. Malcev has shown that $G$ is linear and that one can associate to $G$ a finite dimensional  rational Lie algebra of dimension equal to the Hirsch number of $G$. By \cite{Pickel}, $H^*(G;\Q)\cong H^*(L;\Q)$. But $H^2(L,\Q)\neq 0$ if $L$  has dimension $>1$ by a result of Ado as explained in \cite[p.86]{Chevalley-Eilenberg}. Thus if $G$ is matricially stable, then $L=\Q$ and hence $G=\Z$.
\end{proof}
 Blackadar \cite{Bla:shape} proved that matrices over semiprojective unital $C^*$-algebras are semiprojective. Matricial stability of $G$ can be interpreted as matricial weak semiprojectivity of $C^*(G)$, \cite{ESS}.
 Using the same arguments as in \cite{Bla:shape}, one shows that
if $G$ is matricially stable  and $H$ is  finite, then $G\times H$ is matricially stable since $C^*(G\times H)\cong C^*(G)\otimes C^*(H)$ and $C^*(H)$ is a finite dimensional $C^*$-algebra.
The next corollary notes that
up to rational cohomology these are the only possible examples of matricially stable direct products.
\begin{corollary}\label{cor:product}
Let $G_1,G_2$ be countable discrete MF-groups  that admit $\gamma$-elements. If $G_1\times G_2$ is matricially stable,
then one of the two groups must have the rational cohomology of a trivial group.
\end{corollary}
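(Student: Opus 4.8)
The plan is to apply Theorem~\ref{thm:1} to the product $G_1\times G_2$ and then feed the resulting cohomological vanishing into the Künneth formula. The first task is to check that $G_1\times G_2$ again satisfies the hypotheses of Theorem~\ref{thm:1}. That it is an MF-group does \emph{not} require the tensor product of asymptotic homomorphisms: if $\{\varphi_n\colon G_1\to U(k_n)\}_n$ and $\{\psi_n\colon G_2\to U(l_n)\}_n$ are separating asymptotic homomorphisms and $p_1,p_2$ denote the two coordinate projections, then $\Phi_n:=(\varphi_n\circ p_1)\oplus(\psi_n\circ p_2)\colon G_1\times G_2\to U(k_n+l_n)$ is a unital asymptotic homomorphism (each summand is a pullback of $\varphi_n$ or $\psi_n$ along a group homomorphism, hence satisfies \eqref{ah}), and it separates the elements of $G_1\times G_2$, since for $(g_1,g_2)\neq e$ one has $g_1\neq e$ or $g_2\neq e$, whence $\limsup_n\|\Phi_n(g_1,g_2)-1\|=\max\!\big(\limsup_n\|\varphi_n(g_1)-1\|,\ \limsup_n\|\psi_n(g_2)-1\|\big)>0$. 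The remaining hypothesis, that $G_1\times G_2$ admits a $\gamma$-element, is a permanence property of the class of such groups: the external Kasparov product $\gamma_1\times\gamma_2$ of the $\gamma$-elements of $G_1$ and $G_2$ is a $\gamma$-element for $G_1\times G_2$.

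Granting these two points, Theorem~\ref{thm:1} applies to $G_1\times G_2$, which is matricially stable by hypothesis, and its contrapositive gives $H^{2k}(G_1\times G_2,\Q)=0$ for all $k\geq1$. It is convenient to work with rational homology, since over the field $\Q$ one has $H^n(-;\Q)\neq0\Leftrightarrow H_n(-;\Q)\neq0$ together with the unconditional Künneth isomorphism $H_*(B(G_1\times G_2);\Q)\cong H_*(BG_1;\Q)\otimes_\Q H_*(BG_2;\Q)$, using $B(G_1\times G_2)\simeq BG_1\times BG_2$; moreover $G$ has the rational cohomology of a trivial group exactly when $\widetilde H_*(BG;\Q)=0$. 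Suppose, for contradiction, that $\widetilde H_*(BG_1;\Q)\neq0$ and $\widetilde H_*(BG_2;\Q)\neq0$, and let $p,q\geq1$ be the least degrees in which $H_*(BG_1;\Q)$, respectively $H_*(BG_2;\Q)$, is nonzero. If $p$ is even, then $H_p(B(G_1\times G_2);\Q)$ contains the nonzero Künneth summand $H_p(BG_1;\Q)\otimes_\Q H_0(BG_2;\Q)\cong H_p(BG_1;\Q)$, contradicting the vanishing above; the case $q$ even is symmetric. If $p$ and $q$ are both odd, then $p+q$ is even and $\geq2$, while $H_{p+q}(B(G_1\times G_2);\Q)$ contains the summand $H_p(BG_1;\Q)\otimes_\Q H_q(BG_2;\Q)$, a tensor product of nonzero $\Q$-vector spaces and hence nonzero --- the same contradiction. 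Therefore $\widetilde H_*(BG_1;\Q)=0$ or $\widetilde H_*(BG_2;\Q)=0$, which is the assertion.

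The cohomological bookkeeping is formal; its only mildly subtle point is the odd--odd case, where the forbidden even-dimensional class materializes only after passing to the product, in the combined degree $p+q$. The step that genuinely calls for a reference rather than a one-line verification is the permanence of ``admitting a $\gamma$-element'' under direct products; if one wished to avoid invoking it, one could add the (innocuous in every application here) hypothesis that $G_1$ and $G_2$ lie in a class known to be closed under products and to consist of groups with a $\gamma$-element, such as the groups uniformly embeddable in a Hilbert space.
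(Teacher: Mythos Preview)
Your proof is correct and follows essentially the same route as the paper: apply Theorem~\ref{thm:1} to $G_1\times G_2$ and use the K\"unneth formula to manufacture a forbidden nonzero even-degree class from nonzero classes on the factors. The only difference is cosmetic: the paper first notes that each factor $G_i$ inherits matricial stability from the product (via the projection $G_1\times G_2\to G_i$), applies Theorem~\ref{thm:1} to each $G_i$ to kill even cohomology of the factors up front, and is thereby left with only the odd--odd case of your K\"unneth analysis; you instead handle all cases directly, and you are more explicit than the paper in checking that $G_1\times G_2$ is again MF and in flagging the permanence of the $\gamma$-element under products.
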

\begin{proof} It is immediate that both groups must be matricially stable and hence $H^{2k}(G_i;\Q)=0$, $i=1,2$, for $k\geq 1$.
Seeking a contradiction, suppose that there are $k_i\geq 0$ such that $H^{2k_i+1}(G_i;\Q)\neq 0$ for $i=1,2$.
The K\"{u}nneth formula implies that $H^{2k_1+2k_2+2}(G_1\times G_2;\Q)\neq 0$ which contradicts Theorem~\ref{thm:1}.
\end{proof}

\noindent\textbf{One-relator groups}.
A one-relator group is a group with a presentation of the form $\langle S; r \rangle$, where $r$ is a single element in the free group $F(S)$ on the countable generating set $S$. An important example is the surface group
 \[\Gamma_g=\pi_1(S_g)=\langle s_1,t_1,...,s_g,t_g\,;\, \prod_{i=1}^g [s_i,t_i]\,\rangle,\]
where  $S_g$ is  a connected closed orientable surface of genus $g\geq 1$.
Kazhdan \cite{Kaz-paper} showed that $\Gamma_g$ is not matricially stable. This was reproven (implicitly) in \cite{BB} and in \cite{ESS}. We revisit $\Gamma_g$ in Corollary~\ref{cor:surface}.

 Another important class of examples consists of the Baumslag-Solitar groups
\[BS(m,n)=\langle s,t\, ; \,  st^m s^{-1}t^{-n}\rangle, \quad m, n \in \Z\setminus\{0\}.\]

In general, one can always write $G=\langle S; s^n \rangle$ where $s$ is not a proper power of an element of $F(S)$ and $n\geq 1$.
By a theorem of Karrass, Magnus and Solitar \cite[Thm.5.2]{Baumslag-survey}, $G$ is torsion free if and only if $n=1$.
By a theorem of Wise \cite{Wise}, all torsion one-relator groups are residually finite (and hence MF).
It is not known if all one-relator groups are MF or hyperlinear.

\begin{corollary}\label{cor:surface}
Let $G=\langle S; s^n \rangle$ be a one-relator group.
\begin{itemize}
  \item[(i)] If $G$ is MF (this is automatic if $n>1$) and  $s\in [F(S),F(S)]$ i.e. the image of $s$ is trivial in $G_{ab}$,  then $G$ is not matricially stable.
  \item[(ii)] The surface groups $\Gamma_g$ are not weakly matricially stable for $g\geq 1$.
  \item[(iii)] The Baumslag-Solitar groups $BS(m,n)$ are not weakly matricially stable if $|m|, |n| \geq 2$ and either $|m|\neq |n|$ or $m=n$.
  \end{itemize}
\end{corollary}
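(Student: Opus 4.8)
The plan is to deduce all three parts from Theorem~\ref{thm:1} (for (i)) and Theorem~\ref{thm:2} (for (ii) and (iii)), so that the real content is to verify the three hypotheses in each case: admitting a $\gamma$-element, being MF (resp. weakly quasidiagonal), and having $H^{2k}(G;\Q)\neq 0$ for some $k\geq 1$ — with the exception of one case of (iii), where the even cohomology vanishes and I would instead check that the group fails to be MAP. For the $\gamma$-element I would use that every one-relator group is exact: by the Magnus hierarchy, a one-relator group on at least two generators is an iterated HNN extension over free (Magnus) subgroups of one-relator groups of strictly smaller complexity, the base of the induction being a free product of a cyclic group and a free group, and exactness is inherited by HNN extensions over exact subgroups and by free products; hence all groups in the Corollary are uniformly embeddable in a Hilbert space and admit $\gamma$-elements by \cite{Tu:gamma}. (For the specific groups one can also argue directly: surface groups are hyperbolic, or $\Gamma_1=\Z^2$ is amenable; and $BS(m,n)$ acts on the locally finite Bass--Serre tree of its HNN decomposition with vertex and edge stabilizers $\cong\Z$.)

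For the cohomology I would reduce everything to the torsion-free one-relator case. If the relator $r$ of $\langle S;r\rangle$ is not a proper power then, by Lyndon's theorem, its presentation $2$-complex is aspherical; after splitting off the free factor generated by the letters not occurring in $r$, one may take $S$ finite, and an Euler-characteristic count then gives $\dim_\Q H^2(\langle S;r\rangle;\Q)=\operatorname{rank}(\langle S;r\rangle_{ab})+1-|S|$, which equals $1$ exactly when $r\in[F(S),F(S)]$ (equivalently when $r$ dies in $G_{ab}\cong\Z^{|S|}$). For a torsion one-relator group $G=\langle S;s^n\rangle$ with $n\geq 2$, the quotient map onto $\bar G=\langle S;s\rangle$ has kernel a free product of copies of $\Z/n$, which has trivial reduced rational homology, so the Lyndon--Hochschild--Serre spectral sequence gives $H^*(G;\Q)\cong H^*(\bar G;\Q)$; since $s\in[F(S),F(S)]$ passes to $\bar G$, this returns us to the previous computation.

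I would then conclude as follows. For (i): under the hypotheses $G$ admits a $\gamma$-element and $H^2(G;\Q)\cong\Q\neq 0$, so Theorem~\ref{thm:1} applies; and when $n>1$, $G$ has torsion, hence is hyperbolic by B.\,B. Newman's spelling theorem and residually finite by \cite{Wise}, so automatically MAP and therefore MF. For (ii): $\Gamma_g=\langle s_1,t_1,\dots,s_g,t_g;\prod[s_i,t_i]\rangle$ is a one-relator group with relator in $[F,F]$; it is residually finite, hence quasidiagonal by \cite{TWW} and so weakly quasidiagonal, and $H^2(\Gamma_g;\Q)=H^2(S_g;\Q)\cong\Q\neq 0$, so Theorem~\ref{thm:2} gives the claim. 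For (iii): if $m=n$ the relator $[s,t^m]$ lies in $[F,F]$, so $H^2(BS(m,m);\Q)\cong\Q\neq 0$, and $BS(m,m)$, being residually finite by Meskin's theorem — hence weakly quasidiagonal — and admitting a $\gamma$-element, is not weakly matricially stable by Theorem~\ref{thm:2}. If $|m|\neq|n|$ then $H^{2k}$ vanishes in all positive degrees, so instead I would observe that $BS(m,n)$ is not MAP: any finite-dimensional unitary representation of the two-generated group $BS(m,n)$ has finitely generated linear image, hence residually finite by Mal'cev's theorem \cite[6.4.13]{Brown-Ozawa}, and thus factors through a residually finite quotient; since $BS(m,n)$ is not residually finite for $|m|\neq|n|$ (Meskin), its largest residually finite quotient is proper, and any nontrivial element of the corresponding normal subgroup is annihilated by every finite-dimensional unitary representation. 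As $BS(m,n)$ is weakly quasidiagonal and admits a $\gamma$-element, Theorem~\ref{thm:2} again yields the conclusion.

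The step I expect to be the main obstacle is the weak quasidiagonality of $BS(m,n)$ in the range $|m|\neq|n|$: residual finiteness fails there (Meskin), so this group cannot be fed directly into \cite{TWW}, and one must instead exploit its HNN structure over the amenable subgroup $\Z$ together with the permanence results developed in Section~\ref{3}. The remaining points needing care but essentially routine are: the asphericity and Euler-characteristic bookkeeping when $S$ is infinite (splitting off free factors); the identification of the kernel of $\langle S;s^n\rangle\twoheadrightarrow\langle S;s\rangle$ with a free product of copies of $\Z/n$; and the propagation of exactness along the Magnus hierarchy.
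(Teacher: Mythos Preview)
Your overall strategy matches the paper's, and parts (i), (ii), and the $m=n$ subcase of (iii) go through essentially as you describe. The paper is terser: it cites \cite{Valette-one} and \cite{Baumslag-survey} for the fact that $H^2(G;\Q)\cong\Q$ exactly when $s\in[F(S),F(S)]$, and \cite{Tu:notes}, \cite{Guentner-one} for exactness of one-relator groups, rather than running Lyndon asphericity, the spectral-sequence reduction from $\langle S;s^n\rangle$ to $\langle S;s\rangle$, and the Magnus hierarchy explicitly. For (ii) the paper invokes linearity of $\Gamma_g$ (uniformization) where you use residual finiteness directly; both feed into Theorem~\ref{thm:2} the same way.

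The gap is precisely where you flagged it: weak quasidiagonality of $BS(m,n)$ when $|m|,|n|\geq 2$ and $|m|\neq|n|$. Your proposed fix---HNN structure over $\Z$ together with the permanence results of Section~\ref{3}---does not go through, because Section~\ref{3} contains no permanence result for HNN extensions, nor for amalgamated products over infinite amenable subgroups; the only amalgam result there requires a \emph{finite} common subgroup, and the remaining results (unions, wreath products, free products) do not cover this case either. The paper takes a different and direct route: by a theorem of Kropholler \cite{Kropholler}, every $BS(m,n)$ has free second derived subgroup and is therefore residually solvable, hence residually amenable; then \cite{TWW} yields quasidiagonality immediately, with no appeal to HNN permanence. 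With this in hand, your not-MAP argument via Mal'cev and Meskin \cite{Meskin} is exactly the paper's, and Lemma~\ref{lemma:qdmap}(ii) (subsumed in Theorem~\ref{thm:2}) finishes the case.
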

\begin{proof}
  (i) It is known that $H^2(G;\Q)=\Q$ if  $s\in [F(S),F(S)]$ and $H^2(G;\Q)=0$ if $s\notin [F(S),F(S)]$. This is explained in \cite[p.279]{Valette-one} based on \cite{Baumslag-survey}. Since  one-relator groups are exact by \cite{Tu:notes}, \cite{Guentner-one} we can invoke Theorem~\ref{thm:1}.

  (ii) $\Gamma_g$ are linear groups as a consequence of the uniformization theorem.
   A very short proof that surface groups are residually finite was given by Hempel ~\cite{Hempel}.
  The conclusion follows from (i).

  (iii)  It was shown by Kropholler \cite{Kropholler} that $BS(m,n)$ has the second derived group free and hence it is residually solvable as noted for example in \cite[Ex.1.1]{Clair}. Since  $BS(m,n)$ is residually solvable, it is residually amenable and hence it is a quasidiagonal group by \cite{TWW}.
   On the other hand, it was shown by Meskin \cite{Meskin} that $BS(m,n)$ is residually finite if and only if $|m|=1$ or $|n|=1$ or $|m|= |n|$.
    Thus $BS(m,n)$ is not residually finite if $|m|, |n| \geq 2$ and  $|m|\neq |n|$ and hence it is not MAP.  It follows that it cannot be weakly matricially stable. The case $m=n$ follows from (i) since $st^m s^{-1}t^{-n}$ is a commutator.i
\end{proof}

The groups $B(m,m)$ and  $B(2,3)$ were already shown not to be matricially stable in \cite{ESS}. $B(1,-1)\cong B(-1,1)$ is matricially stable by
  \cite{ESS}. The answer to matricial stability is unknown for the other Baumslag-Solitar groups.

  The mapping class group of $S_g$, denoted by $Mod(S_g)$, is defined as the group of the
isotopy classes of orientation-preserving diffeomorphisms \cite{MCG-book}.
\begin{corollary}\label{cor:mcg} The mapping class group $Mod(S_g)$ is not weakly matricially stable for $g\geq 3$.
\end{corollary}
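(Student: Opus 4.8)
The plan is to invoke Theorem~\ref{thm:2} exactly as in the previous corollaries: it suffices to check that $Mod(S_g)$ (i) admits a $\gamma$-element, (ii) is weakly quasidiagonal, and (iii) has nonvanishing rational cohomology in some positive even degree. For (i), the mapping class group is exact — this follows from the fact that it acts properly on a finite-dimensional CAT(0)-like complex, or more simply from Kida's/Hamenst\"adt's work showing $Mod(S_g)$ acts amenably on a compact space, hence it is exact and therefore admits a $\gamma$-element by \cite{Tu:gamma}. Alternatively, one can cite that $Mod(S_g)$ is uniformly embeddable in a Hilbert space (Hamenst\"adt, Kida). For (ii), the mapping class group is residually finite by a classical theorem of Grossman, and it is finitely generated (indeed finitely presented by work of McCool, Hatcher--Thurston, Wajnryb), so Proposition~\ref{prop:cp-MF} applies — in fact, being residually finite it is even MAP and quasidiagonal. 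So the only real content is (iii).

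For the cohomological input, I would use the fact that $Mod(S_g)$ has a rational cohomology class in degree $2$ coming from the first Mumford--Morita--Miller class $\kappa_1 \in H^2(Mod(S_g);\Q)$, which is nonzero for $g \geq 3$. This is a standard result in the theory of the cohomology of mapping class groups; the class $\kappa_1$ is the pushforward of the square of the vertical Euler class of the universal surface bundle, and its nonvanishing for $g\geq 3$ is classical (Miller, Morita; see also the computations of $H^2(Mod(S_g);\Q)$ by Harer, which show it is one-dimensional for $g\geq 3$, spanned by $\kappa_1$). Thus $H^2(Mod(S_g);\Q)\neq 0$ for $g\geq 3$, and since $Mod(S_g)$ is weakly quasidiagonal and admits a $\gamma$-element, Theorem~\ref{thm:2} shows it is not weakly matricially stable (if it were, the theorem would force $H^{2k}(Mod(S_g);\Q)=0$ for all $k\geq 1$).

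The main obstacle is purely bibliographic rather than mathematical: one must pin down the precise reference for exactness (or uniform embeddability) of $Mod(S_g)$ and for the residual finiteness and finite generation needed to feed Proposition~\ref{prop:cp-MF}, and one must cite the nonvanishing of $\kappa_1$ in $H^2$ for $g\geq 3$ carefully, since for $g\leq 2$ the class $\kappa_1$ does vanish rationally (indeed $Mod(S_1)=SL_2(\Z)$ is virtually free and hence matricially stable, and $Mod(S_2)$ has finite rational cohomology concentrated in low degrees with $H^2=0$), which is exactly why the hypothesis $g\geq 3$ appears. Once these citations are in place the proof is a one-line application of Theorem~\ref{thm:2}.
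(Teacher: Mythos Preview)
Your proposal is correct and follows essentially the same approach as the paper: verify exactness (hence a $\gamma$-element) via Kida/Hamenst\"adt, weak quasidiagonality via residual finiteness, and nonvanishing of $H^2$ via Harer's computation. The only minor differences are bibliographic---the paper cites residual finiteness from \cite[Thm.~6.1]{MCG-book} rather than Grossman directly, and handles $g=3$ and $g\geq 4$ separately using the integral computation $H^2(Mod(S_g);\Z)\cong\Z$ (resp.\ $\Z$ or $\Z\oplus\Z/2$) rather than invoking $\kappa_1$ in rational cohomology---but the mathematical content is identical.
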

\begin{proof}The assumptions of Theorem~\ref{thm:2} are verified due to the following important results.
$Mod(S_g)$ is an exact (boundary amenable) group by work of Kida \cite{Kida} and Hamenst\"{a}dt \cite{Ursula} and it is residually finite by \cite[Thm.6.1]{MCG-book}. In addition, $H^2(Mod(S_g);\Z)\cong \Z$ for $g\geq 4$ by a result of Harer \cite[Thm.5.8]{MCG-book} and $H^2(Mod(S_3);\Z)$ is isomorphic to either $\Z$ or $\Z\oplus \Z/2$ by \cite{g=3}.
\end{proof}
\begin{corollary}\label{cor:out} The groups $\mathrm{Aut}(\mathbb{F}_n)$ and $\mathrm{Out}(\mathbb{F}_n)$  are not weakly matricially stable for $n= 4,6,8$.
\end{corollary}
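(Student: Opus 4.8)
The plan is to derive Corollary~\ref{cor:out} from Theorem~\ref{thm:2}, applied to $G=\mathrm{Aut}(\mathbb{F}_n)$ and to $G=\mathrm{Out}(\mathbb{F}_n)$ for $n\in\{4,6,8\}$. Concretely, one has to check for each such $G$ that (i) $G$ is weakly quasidiagonal, (ii) $G$ admits a $\gamma$-element, and (iii) $H^{2k}(G;\mathbb{Q})\neq 0$ for some $k\geq 1$. Once (i)--(iii) are in place, Theorem~\ref{thm:2} shows that if $G$ were weakly matricially stable then $H^{2k}(G;\mathbb{Q})=0$ for all $k\geq 1$, contradicting (iii); hence $G$ is not weakly matricially stable.

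For (i): $\mathrm{Aut}(\mathbb{F}_n)$ is residually finite by Baumslag, and $\mathrm{Out}(\mathbb{F}_n)$ is residually finite by Grossman. A residually finite group is MAP, hence quasidiagonal by \cite{TWW}, hence weakly quasidiagonal; equivalently one invokes Proposition~\ref{prop:cp-MF} exactly as in the proof of Corollary~\ref{cor:linear}. For (ii): the crucial input is the theorem of Bestvina--Guirardel--Horbez that $\mathrm{Out}(\mathbb{F}_n)$ is boundary amenable, i.e.\ exact. Since exactness is inherited by extensions with exact kernel and exact quotient, the extension $1\to\mathbb{F}_n\to\mathrm{Aut}(\mathbb{F}_n)\to\mathrm{Out}(\mathbb{F}_n)\to 1$, whose kernel is free and therefore exact, shows that $\mathrm{Aut}(\mathbb{F}_n)$ is exact as well. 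Exact discrete groups are uniformly embeddable in Hilbert space, so both groups admit a $\gamma$-element by \cite{Tu:gamma}.

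Point (iii) is where the restriction $n\in\{4,6,8\}$ is used: these are the values $n=2k+2$ with $k\in\{1,2,3\}$, and in each of these cases there is a nonzero even-degree rational homology class, namely the Morita class $\mu_k\in H_{4k}(\mathrm{Out}(\mathbb{F}_{2k+2});\mathbb{Q})$, whose nontriviality is due to Conant--Vogtmann for $k=1,2$ and to Gray for $k=3$; dualizing over $\mathbb{Q}$ yields $H^{4k}(\mathrm{Out}(\mathbb{F}_{2k+2});\mathbb{Q})\neq 0$ with $4k$ even and positive. For $\mathrm{Aut}(\mathbb{F}_{2k+2})$ one uses the corresponding nonzero Morita classes in $H_{4k}(\mathrm{Aut}(\mathbb{F}_{2k+2});\mathbb{Q})$ of Conant--Vogtmann, or, for $n=4$, the direct computation $H^4(\mathrm{Aut}(\mathbb{F}_4);\mathbb{Q})\cong\mathbb{Q}$ of Hatcher--Vogtmann; alternatively, the Lyndon--Hochschild--Serre spectral sequence of the above extension, whose $E_2$-page $H^p(\mathrm{Out}(\mathbb{F}_n);H^q(\mathbb{F}_n;\mathbb{Q}))$ is concentrated in the rows $q=0,1$, transports the nonvanishing from $\mathrm{Out}(\mathbb{F}_n)$ to $\mathrm{Aut}(\mathbb{F}_n)$ provided one checks that the relevant $d_2$ differential does not annihilate the class.

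The routine part is the bookkeeping of the hypotheses of Theorem~\ref{thm:2}; the real care is needed at two points: correctly invoking exactness of $\mathrm{Out}(\mathbb{F}_n)$ (Bestvina--Guirardel--Horbez) together with its stability under the free-kernel extension defining $\mathrm{Aut}(\mathbb{F}_n)$, and locating, for each $n\in\{4,6,8\}$ and for both $\mathrm{Aut}$ and $\mathrm{Out}$, a genuinely nonzero rational cohomology class in a positive even degree. The last of these is the deepest input, the nonvanishing of $\mu_3$ in $H_{12}(\mathrm{Out}(\mathbb{F}_8);\mathbb{Q})$ being the most delicate, and it is precisely the reason the statement stops at $n=8$.
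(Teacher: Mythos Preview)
Your proof is correct and follows essentially the same route as the paper: residual finiteness (Baumslag, Grossman) gives weak quasidiagonality, exactness of $\mathrm{Out}(\mathbb{F}_n)$ (Bestvina--Guirardel--Horbez) together with the extension argument gives exactness of $\mathrm{Aut}(\mathbb{F}_n)$ and hence a $\gamma$-element via Tu, and the nonvanishing of $H^{4k}$ for $k=1,2,3$ feeds into Theorem~\ref{thm:2}. The paper simply cites Vogtmann's survey for the cohomology input rather than tracing the Morita classes through Conant--Vogtmann and Gray; your Lyndon--Hochschild--Serre alternative is unnecessary (and, as you note, would require checking a differential), so you can safely drop it.
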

\begin{proof} $\mathrm{Aut}(\mathbb{F}_n)$ and $\mathrm{Out}(\mathbb{F}_n)$ are residually finite by \cite{Baumslag-auto} and \cite{Grossman}, respectively. The group $\mathrm{Out}(\mathbb{F}_n)$ is exact by \cite{Bestvina}. Since the center of $\mathbb{F}_n$ is trivial $\mathrm{Inn}(\mathbb{F}_n)\cong \mathbb{F}_n$ and hence $\mathrm{Aut}(\mathbb{F}_n)$ is exact being extension of an exact group by another exact group \cite{KirWas:permanence}. The cohomology of these groups is very hard to compute. Nevertheless it is known that  $H^{4k}(\mathrm{Aut}(\mathbb{F}_{2k+2});\Q))\neq 0 \neq H^{4k}(\mathrm{Out}(\mathbb{F}_{2k+2});\Q))$ for $k=1,2,3$. \cite{Vogtmann}.
\end{proof}
\begin{example}\label{rem:contraex}
 The vanishing of $H^{2k}(G;\Q)$, $k\geq 1$, is  only a first obstruction to matricial stability. Let $G$ be the 3-dimensional Bieberbach group
  \[G=\langle x,w,y\,:\, w^{-1}zw=x^{-1}zx=z^{-1}, \, w^{-1}xw =x^{-1}z\rangle.\]
  $G$ is not matricially stable, since as shown in \cite{ESS}, no crystallographic group of dimension $\geq 3$ is matricially stable.
   One knows that $H_1(G;\Z)=G_{ab}=\Z \times \Z/4$ by \cite{Conway-Rossetti} and $H^3(G,\Z)\cong \Z/2$ as the corresponding flat 3-manifold is not orientable \cite[p.120]{Wolf-spaces}. Since the Euler characteristic of a nontrivial torsion-free polycyclic-by-finite group $G$ is zero by \cite{Dekimpe}, one concludes that the rational cohomology of $G$
is given by: \(H^0(G;\Q)= H^1(G;\Q)=\Q,\,   H^2(G;\Q)= H^3(G;\Q)= 0.\)

We will argue that $G$ is weakly matricially stable. By Theorem~\ref{thm:amen} and Remark~\ref{rem:flat}, it suffices to show that $K^0(BG)$ is generated by flat bundles. Since $BG$ is 3-dimensional, $K^0(BG)$ is generated by line bundles and so $K^0(BG)\cong \Z \oplus H^2(BG,\Z).$
  By the universal coefficient theorem, $H^2(BG,\Z)\cong \mathrm{Ext}(H_1(BG,\Z),\Z)\cong \Z/4$ is a torsion group and hence its elements are represented by flat line bundles by \cite[2.6]{Putman}.
\end{example}
\section{Quasidiagonal Groups}\label{3}
In this section we discuss some very basic aspects of the classes of groups (MF, quasidiagonal and weakly quasidiagonal) that appear in the statements of our main results: \ref{thm:1}, \ref{thm:2} and \ref{thm:af}.
All the groups $G$ that we consider are countable and discrete.

Recall that a $C^*$-algebra is MF if it embeds in  $\prod_n M_{k_n}/\bigoplus_n M_{k_n}$, for some sequence $(k_n)$, \cite{BlackKir:1}.
\begin{definition}[\cite{CDE2013}]
  A group $G$ is MF if it is isomorphic to a subgroup of the unitary group of an MF-algebra.
\end{definition}
\begin{proposition}\label{Prop:MF}
  The following assertions are equivalent.
  \begin{itemize}
    \item[(i)] $G$ is an MF group
    \item[(ii)] $G$ embeds in $\mathbf{U}/\mathbf{N}$ where $\mathbf{U}=\prod_{n=1}^{\infty} U(n)$ and
  $\mathbf{N}=\{(u_n)_n \in \mathbf{U}\,:\, \|u_n-1_n\|\to 0\}$.
    \item[(iii)]  For each finite subset $F$ of $G$ and for any $\epsilon > 0$ there is a unital map
$\varphi:G \to U(n)$ such that $\|\varphi(st)-\varphi(s)\varphi(t)\|<\varepsilon$ for all $s,t\in F$ and  $\|\varphi(s)-1\|\geq \sqrt{2}$ for all $s\in F\setminus \{e\}$.
  \end{itemize}
\end{proposition}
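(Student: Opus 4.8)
The plan is to establish the cycle of implications $(i)\Rightarrow(ii)\Rightarrow(iii)\Rightarrow(i)$, with the bulk of the work being the translation between the three formulations and the choice of the constant $\sqrt{2}$.

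First I would record that the quotient group $\mathbf{U}/\mathbf{N}$ sits naturally inside $\prod_n M_n/\bigoplus_n M_n$: the algebra $\prod_n M_n/\bigoplus_n M_n$ contains the unitaries $\prod_n U(n)/(\prod_n U(n)\cap \bigoplus_n M_n)$, and $\mathbf{N}=\prod_n U(n)\cap\bigoplus_n M_n$ since for a sequence of unitaries, $\|u_n-1_n\|\to 0$ is equivalent to $\|u_n - 1_n\|\to 0$ when viewed as the distance to the identity in the unitalization of $\bigoplus_n M_n$. From this, $(ii)\Rightarrow(i)$ is immediate: $\mathbf{U}/\mathbf{N}$ is the unitary group of (a corner of) the MF algebra $\prod_n M_n/\bigoplus_n M_n$. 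For $(i)\Rightarrow(ii)$, suppose $G\hookrightarrow U(A)$ for an MF algebra $A\hookrightarrow \prod_n M_{k_n}/\bigoplus_n M_{k_n}$. The composite embeds $G$ into the unitary group of the Calkin-type quotient; one then has to lift: a unitary $u$ in $\prod_n M_{k_n}/\bigoplus_n M_{k_n}$ lifts to a \emph{sequence of unitaries} in $\prod_n M_{k_n}$, because unitaries lift along the quotient map (the quotient is by an ideal in a von Neumann-like product, and $K_1$ considerations or a direct functional-calculus polar-decomposition argument on a self-adjoint lift shows surjectivity of unitaries). Reindexing the block sizes (padding with $1\times 1$ identity blocks) identifies $\prod_n U(k_n)/(\prod_n U(k_n)\cap \bigoplus M_{k_n})$ with a subgroup of $\mathbf{U}/\mathbf{N}$, giving $(ii)$.

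Next, $(ii)\Rightarrow(iii)$: given an embedding $\iota: G\to\mathbf{U}/\mathbf{N}$, choose for each $s$ a lift $(\varphi_n(s))_n\in\mathbf{U}$ with $\varphi_n(e)=1_n$; then for each $s,t$, $\|\varphi_n(st)-\varphi_n(s)\varphi_n(t)\|\to 0$ since $\iota$ is a homomorphism, and for $s\ne e$ we have $\liminf_n\|\varphi_n(s)-1_n\|>0$ — in fact, since $\iota(s)\ne 1$ in $\mathbf{U}/\mathbf{N}$ means $\limsup\|\varphi_n(s)-1_n\|>0$; passing to a subsequence on which this $\limsup$ is attained and is $\geq c>0$, and then perturbing, one can arrange the bound $\sqrt{2}$. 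The point is that a unitary $u$ with $\|u-1\|<\sqrt 2$ has spectrum in the open right half plane (missing $-1$), so one can take the ``spectral square root'' and iterate: if $\|u-1\|$ is merely positive, replacing the sequence $u_n$ on the relevant subsequence by fixed off-diagonal permutation-type unitaries (or by diagonal unitaries with an eigenvalue near $-1$) upgrades the separation to $\|\varphi(s)-1\|\geq\sqrt2$ while only improving the multiplicativity on the finite set $F$. Concretely, for a finite set $F$ and $\epsilon>0$, pick $n$ large enough in the subsequence and a single index where all the relevant $\|\varphi_n(s)-1_n\|$ are bounded below; a diagonal adjustment makes each such eigenvalue equal to $-1$ for $s\in F\setminus\{e\}$.

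Finally $(iii)\Rightarrow(i)$: this is the standard ultraproduct construction. Enumerate $G=\{g_1,g_2,\dots\}$, apply $(iii)$ with $F=F_m=\{g_1,\dots,g_m\}$ and $\epsilon=1/m$ to get $\varphi_m:G\to U(n_m)$ with $\|\varphi_m(st)-\varphi_m(s)\varphi_m(t)\|<1/m$ for $s,t\in F_m$ and $\|\varphi_m(s)-1\|\geq\sqrt 2$ for $s\in F_m\setminus\{e\}$. The map $g\mapsto (\varphi_m(g))_m$ then descends to a group homomorphism $G\to \mathbf{U}/\mathbf{N}\subseteq U\big(\prod_m M_{n_m}/\bigoplus_m M_{n_m}\big)$, and the separation condition (together with $\|\varphi_m(s)-1\|\geq\sqrt2$ forcing the image to be nontrivial — indeed a unitary at distance $\sqrt2$ from $1$ is ``genuinely far'') shows it is injective. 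Hence $G$ is a subgroup of the unitary group of the MF algebra $\prod_m M_{n_m}/\bigoplus_m M_{n_m}$, which is $(i)$.

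The main obstacle I anticipate is the bookkeeping around the constant $\sqrt 2$ in $(iii)$ and, correspondingly, verifying injectivity in $(iii)\Rightarrow(i)$: one must check that the \emph{a priori} weak separation $\limsup\|\varphi_n(s)-1\|>0$ extracted from injectivity of an abstract embedding can be boosted to the clean uniform lower bound $\sqrt 2$, which is why the diagonal/permutation perturbation trick is needed rather than a naive estimate; dually, one needs that $\|u-1\|\geq\sqrt2$ in each $M_{n_m}$ propagates to $\|\iota(s)-1\|>0$ in the quotient, which is clear since the quotient norm is the $\limsup$.
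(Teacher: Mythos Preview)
Your implications $(i)\Leftrightarrow(ii)$ and $(iii)\Rightarrow(i)$ are fine and match what the paper regards as routine. The genuine gap is in your $(ii)\Rightarrow(iii)$, specifically in the step where you upgrade the weak separation $\limsup_n\|\varphi_n(s)-1_n\|>0$ to the uniform bound $\|\varphi(s)-1\|\geq\sqrt{2}$. Your proposed ``diagonal adjustment'' or ``replacing $u_n$ by permutation-type unitaries'' does not work: the values $\varphi_n(s)$ for different $s$ are not independent, they are tied together by the approximate multiplicativity relations $\|\varphi_n(st)-\varphi_n(s)\varphi_n(t)\|<\varepsilon$. If you alter $\varphi_n(s)$ for one $s\in F$ (say by moving an eigenvalue to $-1$), then $\varphi_n(s)\varphi_n(t)$ changes while $\varphi_n(st)$ does not, and multiplicativity is destroyed rather than ``only improved.'' There is no local perturbation of individual values that achieves what you want.

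The correct mechanism, which the paper cites to Korchagin and spells out in the parallel Proposition~\ref{prop:char-cp-MF}, is a \emph{tensor-power} argument. Fix $s\neq e$ and a subsequence along which $\|\varphi_n(s)-1\|\to\delta>0$; write $\delta=|e^{i\alpha}-1|$ with $\alpha\in(0,\pi]$. Replacing $\varphi_n$ by the asymptotic homomorphism $\varphi_n^{\otimes m}$ (which is again asymptotically multiplicative, since the tensor product of $\varepsilon$-multiplicative unital maps is $O(m\varepsilon)$-multiplicative) sends an eigenvalue $e^{i\theta_n}$ of $\varphi_n(s)$ to $e^{im\theta_n}$, and for $m$ the least integer with $m\alpha\geq\pi/2$ one gets $\liminf_n\|\varphi_n^{\otimes m}(s)-1\|\geq\sqrt{2}$. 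This handles one $s$ at a time; to get the bound simultaneously for all of $F\setminus\{e\}$ one takes direct sums of (tails of) the sequences obtained for each $s$. The paper's own proof simply records that $(i)\Leftrightarrow(ii)$ is trivial and defers $(i)\Leftrightarrow(iii)$ to \cite{Korchagin}, pointing to Proposition~\ref{prop:char-cp-MF} for this tensor-power trick.
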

\begin{proof} $(i) \Leftrightarrow (ii)$ is trivial whereas
  $(iii) \Leftrightarrow  (i)$ is proved in \cite{Korchagin}. See also Proposition~\ref{prop:char-cp-MF} for a related  property.
\end{proof}
\begin{remark}\label{rem:b}
(i)   Proposition~\ref{Prop:MF} implies that a group that is locally embeddable in  MF groups is  MF.  Since amenable groups are MF by \cite{TWW},
it follows  that  LEA groups are MF.

(ii) Thom \cite{Thom-hyperlinear} showed that a certain group $K$ constructed by de Cornulier is hyperlinear but not LEA. The group $K$ is the quotient of a property (T) subgroup of $SL_8(\Z[1/p])$ by a cyclic central subgroup.
If $N$ is a central subgroup of a residually finite group $G$, then $G/N$ is MF  by \cite[Thm.2.17]{CDE2013}.  Thus $K$ is also an example of an MF group which not LEA.
\end{remark}

\begin{definition}\label{def:qd} A countable discrete group $G$ is \emph{quasidiagonal} if it is isomorphic to a subgroup of the unitary group of a quasidiagonal $C^*$-algebra.
 \end{definition}

Equivalently, $G$ is quasidiagonal if it admits a quasidiagonal faithful unitary representation  on a separable Hilbert space. A representation $\pi:G \to U(H)$  is quasidiagonal if  there is an increasing sequence  $(p_n)_{n}$ of finite dimensional projections which converges strongly to $1_H$ and such that $\lim_{n\to \infty}\|[\pi(s),{p_n}]\|= 0$ for all $s \in G$. In other words, the $C^*$-algebra $C^*_\pi(G)=C^*(\pi(G))$ is quasidiagonal.

\begin{remark}\label{rem:c}

(i) If
$G\subset \prod_n U(B_n)$ with $B_n$ separable and quasidiagonal, then $G$ is quasidiagonal since the $C^*$-algebra $\prod_n B_n$
    is quasidiagonal. Thus the MAP groups are quasidiagonal.
  It also follows that the residually amenable groups are quasidiagonal by \cite{TWW}.

(ii) If $H$ is quasidiagonal and $G$ is amenable, then the  wreath product $H\wr G$ is quasidiagonal.
Indeed, by \cite[Thm.4.2]{Dad-Pennig-Schneider},
if $D$ is a unital separable quasidiagonal $C^*$-algebra and $G$ is a countable discrete amenable group, then the crossed product  $(\bigotimes_G D)\rtimes G$ is quasidiagonal (here we work with minimal tensor products and $G$ acts via noncommutative Bernoulli shifts).  Thus, if there is an embedding $\omega:H\to U(D)$, then the map $h\to 1_D \oplus \omega(h) \in U_2(D)$
induces an embedding  $\bigoplus_G H \to U(\bigotimes_G M_2(D))$. It follows that
 $H\wr G=\left(\bigoplus_G H\right)\rtimes G$ embeds in the unitary group of  $(\bigotimes_G M_2(D))\rtimes G$ and so $H\wr G$ is quasidiagonal.

(iii) The class of quasidiagonal groups is strictly larger than the class of residually amenable groups as noted in Example~\ref{qd-not-ra} below. We do not have examples of quasidiagonal groups which are not LEA.
\end{remark}
\begin{example}\label{qd-not-ra}
 Let $\mathrm{O}_n(\Q)\subset\mathrm{O}_n(\R)$ be the group of orthogonal matrices with rational entries.
 $\mathrm{O}_{n}(\Q)$ is MAP (since $\mathrm{O}_n(\R)$ is compact) and hence quasidiagonal. We will argue below that it is not residually amenable if $n\geq 5$.
  Let $\Omega_n$ denote the commutator subgroup of $\mathrm{O}_n(\Q)$. It was shown by
   Kneser \cite{Kneser} that the projective group $P\Omega_n$ is simple for $n\geq 5$. If $n$ is odd, the center of $\mathrm{O}_n(\Q)$ is trivial and in particular $\Omega_5=P\Omega_5$ is simple.
 Since $\Omega_5$ is simple and infinite, it is not virtually solvable and hence by the Tits alternative \cite{Tits} it contains a non-abelian free subgroup. 
It follows that any group that contains a subgroup isomorphic to $\Omega_5$ is not residually amenable. In particular this applies to $\mathrm{O}_n(\Q)$, $n\geq 5$.
\end{example}
 We denote by $\lambda_G$ the left regular representation of $G$ and by $\iota_G$  the trivial representation.
 \begin{proposition}\label{Bk}
 Let $\{\omega_n:G \to U(H_n)\}_{n\geq 1}$
 be  a  sequence of group
representations  that separates the points
of $G$.
Then $\lambda_G$ is weakly contained in
$\{\iota_G\}\cup\{ \omega_{i_1}\otimes  \cdots \otimes \omega_{i_n}\,\colon\, 1 \leq i_1\leq \cdots \leq i_n, \, n\geq 1\}.$
\end{proposition}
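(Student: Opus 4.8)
The plan is to reduce the statement to a standard fact about weak containment of unitary representations, namely that if a family $\mathcal{F}$ of representations separates points of $G$, then the left regular representation $\lambda_G$ is weakly contained in the closure of $\{\iota_G\}$ together with all tensor products of members of $\mathcal{F}$. In fact I would first observe that it suffices to prove: a representation $\pi$ that separates points has the property that $\lambda_G \prec \iota_G \oplus \bigoplus_{n\geq 1} \pi^{\otimes n}$; then one applies this to $\pi=\bigoplus_n \omega_n$ and expands $\pi^{\otimes n}$ into the sub-tensor-products $\omega_{i_1}\otimes\cdots\otimes\omega_{i_n}$, noting that each such summand is contained in $\pi^{\otimes n}$, and that (since the tensor factors commute up to unitary equivalence) we may always arrange $i_1 \leq \cdots \leq i_n$. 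Weak containment is preserved under passing to subrepresentations and direct sums, so this rearrangement costs nothing.

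For the core claim I would argue via positive-definite functions and the Fell topology. A net of representations $\sigma_\alpha$ weakly contains $\tau$ iff every diagonal matrix coefficient of $\tau$ is a pointwise limit, uniformly on finite sets, of sums of diagonal matrix coefficients of the $\sigma_\alpha$. Concretely, one takes the matrix coefficient $s\mapsto \langle \lambda_G(s)\delta_e,\delta_e\rangle = \delta_{s,e}$ and must approximate it (uniformly on finite subsets $F\subset G$) by convex-type combinations of matrix coefficients $s\mapsto \langle \pi^{\otimes n}(s)\xi,\xi\rangle$. The standard device: since $\pi$ separates points, for each $s\neq e$ we have $\|\pi(s)\xi - \xi\| > 0$ for some unit vector $\xi$, equivalently $\mathrm{Re}\langle \pi(s)\xi,\xi\rangle < 1$; by taking tensor powers, $\langle \pi^{\otimes n}(s)\xi^{\otimes n},\xi^{\otimes n}\rangle = \langle \pi(s)\xi,\xi\rangle^n \to 0$. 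Summing suitable such vectors over the finitely many nontrivial $s\in F$ (and normalizing) produces a positive-definite function close to $\delta_{s,e}$ on $F$; including the trivial representation $\iota_G$ handles the value at $e$. This shows $\delta_{s,e}$ lies in the closed convex cone generated by matrix coefficients of $\{\iota_G\}\cup\{\pi^{\otimes n}\}$, which is precisely the assertion $\lambda_G\prec \{\iota_G\}\cup\{\pi^{\otimes n}: n\geq 1\}$, since $\delta_{s,e}$ is a cyclic matrix coefficient generating $\lambda_G$.

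The main obstacle — really a bookkeeping point rather than a deep one — is making the approximation of $\delta_{s,e}$ uniform over a prescribed finite set $F$ simultaneously, rather than one group element at a time. The resolution is to choose, for each $s\in F\setminus\{e\}$, a unit vector $\xi_s$ in the space of $\pi$ with $|\langle\pi(s)\xi_s,\xi_s\rangle|\leq 1-\eta_s$ for some $\eta_s>0$, pick $n$ large enough that $(1-\eta_s)^n < \varepsilon$ for all such $s$, and then form the vector $\Xi = c\big(t^{\otimes n}_0\oplus \bigoplus_{s\in F\setminus\{e\}}\xi_s^{\otimes n}\big)$ inside $\iota_G^{\otimes n}\oplus\big(\bigoplus \pi^{\otimes n}\big)$ (with $t_0$ the trivial line), normalized so that $\|\Xi\|=1$; its diagonal matrix coefficient is then within $\varepsilon$ of $\delta_{s,e}$ uniformly on $F$. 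I would also remark that $\pi^{\otimes n}=(\bigoplus_m \omega_m)^{\otimes n}$ decomposes as a direct sum of the $\omega_{i_1}\otimes\cdots\otimes\omega_{i_n}$, so that each $\xi_s^{\otimes n}$ lives in one such summand, which yields the stated form of the weakly-containing family. This last decomposition and the symmetrization $i_1\leq\cdots\leq i_n$ are the only places where the product structure of $\pi$ (as opposed to an abstract separating representation) is used.
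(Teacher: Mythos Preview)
Your overall plan---approximate the positive-definite function $\delta_e$ by matrix coefficients of representations in the family---matches the paper's, but the execution has two genuine gaps.

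First, the step ``$\langle\pi(s)\xi,\xi\rangle^n\to 0$'' requires $|\langle\pi(s)\xi,\xi\rangle|<1$, not merely $\mathrm{Re}\,\langle\pi(s)\xi,\xi\rangle<1$. In your final paragraph you silently upgrade to the modulus condition $|\langle\pi(s)\xi_s,\xi_s\rangle|\le 1-\eta_s$, but this can fail: if $\pi(s)$ happens to be a nontrivial scalar $e^{i\theta}\cdot 1$ (as when all $\omega_n$ are the same faithful one-dimensional character of $G=\Z$), every matrix coefficient has modulus exactly $1$. The paper handles this by working instead with $f_i(\cdot)=\tfrac12\bigl(1+\langle\omega_i(\cdot)\xi_i,\xi_i\rangle\bigr)$, the matrix coefficient of $\iota_G\oplus\omega_i$ at the vector $\tfrac{1}{\sqrt2}(1\oplus\xi_i)$; averaging with $1$ forces $|f_i(s)|<1$ whenever $\langle\omega_i(s)\xi_i,\xi_i\rangle\neq 1$.

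Second, your direct-sum vector $\Xi=c\bigl(t_0^{\otimes n}\oplus\bigoplus_{s\in F\setminus\{e\}}\xi_s^{\otimes n}\bigr)$ does not yield a matrix coefficient close to $\delta_e$. Its value at $g\in G$ is $|c|^2\bigl(1+\sum_{s}\langle\pi(g)\xi_s,\xi_s\rangle^n\bigr)$; at $g=s_0\in F\setminus\{e\}$ the constant contribution $|c|^2=1/|F|$ from $t_0$ survives no matter how large $n$ is, and the cross terms $\langle\pi(s_0)\xi_s,\xi_s\rangle^n$ for $s\neq s_0$ need not be small either. The correct device is a \emph{product} of positive-definite functions (tensor product of representations), not a sum: the paper sets $f_I=\prod_{j}f_{i_j}$, a matrix coefficient of $\bigotimes_j(\iota_G\oplus\omega_{i_j})$, and lets the multi-index $I$ grow so that for each $s\in F\setminus\{e\}$ some factor $f_{i(s)}$ with $|f_{i(s)}(s)|<1$ appears to a high power. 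Since all factors have modulus at most $1$, this drives $f_I(s)\to 0$ uniformly over $F\setminus\{e\}$ while $f_I(e)=1$, resolving both the modulus issue and the uniformity issue at once.
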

 \begin{proof}
Since $(\omega_n)_n$
separates the points of $G$, for each $s\in G\setminus\{e\}$ there is $i=i(s)\geq 1$ such that $\omega_i(s)\neq 1$. Using the spectral theorem we find a unit vector $\xi_i \in H_i$ such that the positive definite map associated to $\iota_G\oplus \omega_i$, $f_i(\cdot)=\frac{1}{2}(1+\langle\omega_i(\cdot)\xi_i,\xi_i\rangle)$ satisfies $|f_i(s)|<1$.
For $I=(i_1,\cdots, i_n)$ with $ 1 \leq i_1\leq \cdots \leq i_n$, the positive definite map $f_I=f_{i_1}\cdots f_{i_n}$ is associated to  the representation
 $(\iota_G\oplus \omega_{i_{1}})\otimes  \cdots \otimes (\iota_G\oplus \omega_{i_{n}})$ which is unitarily equivalent to a direct sum of representations from $\Omega:=\{\iota_G\}\cup\{ \omega_{i_1}\otimes  \cdots \otimes \omega_{i_n}\,\colon\, 1 \leq i_1\leq \cdots \leq i_n, \, n\geq 1\}.$ As $I$ increases in size, for each $s\neq e$, $f_I$ contains larger and larger powers of $f_{i(s)}$. Since $f_I(e)=1$, it follows that
 $\lim_{|I|\to \infty} f_I(s)=\delta_e(s)$ for all $s\in G$. Since the
positive definite map $\delta_e$ corresponds to a cyclic
vector of $\lambda_G$ and $f_I$ is associated to $\Omega$, it follows by \cite[18.1.4]{Dix:C*} that
$\lambda_G$ is weakly contained in $\Omega$.
 \end{proof}

\begin{proposition}\label{cor:qd-embed} Let $G$ be a countable discrete group. The following assertions are equivalent:
\begin{itemize}
  \item[(i)] $G$ is quasidiagonal.
  \item[(ii)] $\lambda_G$ is weakly contained in a quasidiagonal representation $\pi$ of $G$.
  \item[(iii)] The canonical  map $q_G:C^*(G)\to C_r^*(G)$ factors through  a unital  quasidiagonal C*-algebra.
\end{itemize}
\end{proposition}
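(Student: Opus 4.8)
The plan is to prove the three conditions equivalent by running the cycle $(i)\Rightarrow(ii)\Rightarrow(iii)\Rightarrow(i)$. The two implications $(ii)\Rightarrow(iii)$ and $(iii)\Rightarrow(i)$ are essentially formal, and the real content is in $(i)\Rightarrow(ii)$, where Proposition~\ref{Bk} does the decisive work.

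For $(iii)\Rightarrow(i)$: if $q_G$ factors as $C^*(G)\xrightarrow{\alpha}D\xrightarrow{\beta}C^*_r(G)$ with $D$ unital quasidiagonal, then $\alpha$ restricts to a group homomorphism $s\mapsto\alpha(u_s)$ from $G$ into the unitary group of the (still quasidiagonal) corner $\alpha(1)D\alpha(1)$, where $u_s$ is the canonical unitary of $C^*(G)$. This homomorphism is injective because $\beta(\alpha(u_s))=q_G(u_s)=\lambda_G(s)$ and the left regular representation of a discrete group is faithful; hence $G$ embeds into the unitary group of a quasidiagonal $C^*$-algebra, i.e. it is quasidiagonal. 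For $(ii)\Rightarrow(iii)$: if $\lambda_G\prec\pi$ with $\pi$ quasidiagonal, set $D:=C^*_\pi(G)$, a unital (since $\pi(e)=1$) quasidiagonal $C^*$-algebra; the universal property of $C^*(G)$ gives a $*$-homomorphism $\pi_*\colon C^*(G)\to D$ with $\pi_*(u_s)=\pi(s)$, and $\lambda_G\prec\pi$ means precisely that $\|q_G(a)\|\le\|\pi_*(a)\|$ for every $a\in C^*(G)$, so $\ker\pi_*\subseteq\ker q_G$ and $q_G$ descends to a $*$-homomorphism $D\to C^*_r(G)$, exhibiting the desired factorization.

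The main step is $(i)\Rightarrow(ii)$. By hypothesis $G$ carries a faithful quasidiagonal representation $\sigma$ on a separable Hilbert space, and faithfulness means $\sigma$ separates the points of $G$. Applying Proposition~\ref{Bk} with $\omega_n=\sigma$ for all $n$ yields that $\lambda_G$ is weakly contained in $\{\iota_G\}\cup\{\sigma^{\otimes n}:n\ge1\}$, hence in the single representation $\pi:=\iota_G\oplus\bigoplus_{n\ge1}\sigma^{\otimes n}$. It remains to check that $\pi$ is quasidiagonal, and for this I would use two permanence facts. First, a tensor product of quasidiagonal representations is quasidiagonal: if $(p_m)$ and $(q_m)$ witness quasidiagonality of $\rho_1$ and $\rho_2$, then $(p_m\otimes q_m)$ witnesses it for $\rho_1\otimes\rho_2$, since $\|[\rho_1(s)\otimes\rho_2(s),\,p_m\otimes q_m]\|\le\|[\rho_1(s),p_m]\|+\|[\rho_2(s),q_m]\|$; so every $\sigma^{\otimes n}$ is quasidiagonal. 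Second, a countable block-diagonal sum of quasidiagonal representations on separable spaces is quasidiagonal, by a routine diagonalization over an enumeration $G=\{g_1,g_2,\dots\}$: for the $k$-th summand and each stage $N\ge k$ pick a projection from its witnessing sequence commuting with $g_1,\dots,g_N$ up to $1/N$, with the choices increasing in $N$, and let $P_N$ be the direct sum of the first $N$ of these; then $P_N$ increases strongly to the identity and $\|[\pi(g_j),P_N]\|<1/N$ for $N\ge j$. (Alternatively, at the level of $C^*$-algebras: $C^*_\pi(G)$ embeds as a $C^*$-subalgebra of the product $\mathbb{C}\times\prod_{n\ge1}C^*_{\sigma^{\otimes n}}(G)$ of quasidiagonal $C^*$-algebras and is therefore quasidiagonal, and one may pass to an infinite amplification -- disjoint from the compacts -- to realize it by an honest quasidiagonal representation, still weakly containing $\lambda_G$, via Voiculescu's theorem.)

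The only genuine obstacle is the one Proposition~\ref{Bk} is built to overcome: a faithful quasidiagonal representation need not on its own weakly contain $\lambda_G$, and it is precisely the passage to the tensor powers $\sigma^{\otimes n}$ -- which forces the associated positive-definite functions to converge to $\delta_e$ -- that produces a quasidiagonal representation dominating the regular representation. Everything else reduces to bookkeeping with weak containment and the standard permanence properties of quasidiagonality.
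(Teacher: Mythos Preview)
Your argument is correct and follows essentially the same cycle $(i)\Rightarrow(ii)\Rightarrow(iii)\Rightarrow(i)$ as the paper, with Proposition~\ref{Bk} doing the work in $(i)\Rightarrow(ii)$. The only cosmetic difference is in how you justify that $\pi=\bigoplus_{n\ge 0}\sigma^{\otimes n}$ is quasidiagonal: you build explicit asymptotically commuting projections summand by summand, whereas the paper observes directly that $C^*_\pi(G)\subset\prod_{n\ge 0}B^{\otimes n}$ with $B=C^*_\sigma(G)$ quasidiagonal (your ``alternatively'' clause is exactly this), invoking the standard permanence of quasidiagonality under minimal tensor products, products, and subalgebras.
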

\begin{proof}
   $(i) \Rightarrow (ii)$ By assumption, there is a faithful representation $\omega:G \to U(H)$ such that the $C^*$-algebra  $B=C_\omega^*(G)$
 is quasidiagonal.
   Proposition~\ref{Bk} shows that $\lambda_G$ is weakly contained in the set $\{\omega^{\otimes n}: \, n \geq 0\}$, $\omega^{\otimes 0}=\iota_G$, and hence in $\pi=\bigoplus_{n\geq 0} \omega^{\otimes n}.$
   Since $C_{\pi}^*(G)\subset \prod_{n\geq 0} B^{\otimes n}$ (minimal tensor products) and $B$ is quasidiagonal, the representation $\pi$ is quasidiagonal.

   $(ii) \Rightarrow (iii)$ If $\pi$ is as in (ii), then
    $q_G$ factors  through the quasidiagonal $C^*$-algebra  $C_{\pi}^*(G)$.

 $(iii) \Rightarrow (i)$
  By assumption, $q_G$ factors through  a unital  quasidiagonal C*-algebra $D$.
  Let $\pi$ be  the composition $G \to U(C^*(G))\to U(D)$.  Since $\pi$ is a lifting of $\lambda_G:G \to U(C^*_r(G))$, it follows that $\pi$ is injective. Moreover,
 $\|\pi(s)-\pi(t)\| \geq \|\lambda_G(s)-\lambda_G(t)\|\geq \sqrt{2}$ for $s\neq t$.
\end{proof}

Free products (without amalgamation) of quasidiagonal groups are quasidiagonal as a consequence of \cite{Boca:free-prod=qd}.
 It is an open problem whether the amagalmated product of two amenable groups over a finite group  is residually amenable or not \cite[Question 2]{Nikolov}. However, we show that it is quasidiagonal.
\begin{proposition}
 Let $G_1$, $G_2$ be residually amenable countable discrete groups with a common finite subgroup $H$. Then $G_1\star_{H} G_2$ is quasidiagonal.
\end{proposition}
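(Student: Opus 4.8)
The plan is to verify the criterion of Proposition~\ref{cor:qd-embed}(iii) for $G=G_1\star_H G_2$: it suffices to exhibit the canonical surjection $q_G\colon C^*(G)\to C^*_r(G)$ as factoring through a unital quasidiagonal $C^*$-algebra. I will build that intermediate algebra as a reduced amalgamated free product of quasidiagonal models for the two factors.

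First I would use the standard identifications $C^*(G)=C^*(G_1)\star_{C^*(H)}C^*(G_2)$ (full amalgamated free product) and $C^*_r(G)=C^*_r(G_1)\star^{\mathrm{red}}_{C^*(H)}C^*_r(G_2)$ (reduced amalgamated free product with respect to the canonical conditional expectations onto $C^*(H)=C^*_r(H)$; this is legitimate since $H$ is finite), under which $q_G$ restricts to $q_{G_i}$ on each $C^*(G_i)$. Since $G_i$ is residually amenable it is quasidiagonal by Remark~\ref{rem:c}(i), so Proposition~\ref{cor:qd-embed}(iii) supplies a unital quasidiagonal $C^*$-algebra $B_i$ and $*$-homomorphisms $\alpha_i\colon C^*(G_i)\to B_i$ and $\beta_i\colon B_i\to C^*_r(G_i)$ with $\beta_i\alpha_i=q_{G_i}$. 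Because $q_{G_i}$ restricts to the canonical injection $C^*(H)=C^*_r(H)\hookrightarrow C^*_r(G_i)$ on $C^*(H)$, the map $\alpha_i$ is injective on $C^*(H)$ and $\beta_i$ carries $\alpha_i(C^*(H))$ isomorphically onto $C^*_r(H)\subseteq C^*_r(G_i)$. The composition of $\beta_i$, the canonical conditional expectation $E^r_i\colon C^*_r(G_i)\to C^*_r(H)$, and the inverse of this isomorphism is then a conditional expectation $E_i\colon B_i\to\alpha_i(C^*(H))$ for which $\beta_i$ is expectation-preserving.

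Next I would form $D:=B_1\star^{\mathrm{red}}_{C^*(H)}B_2$. By the universal property of the full amalgamated free product (applied to $\alpha_1,\alpha_2$, which agree on $C^*(H)$) and by functoriality of the reduced amalgamated free product along expectation-preserving $*$-homomorphisms (applied to $\beta_1,\beta_2$), one obtains $*$-homomorphisms $C^*(G)\to D\to C^*_r(G)$ whose composite restricts to $q_{G_i}$ on each $C^*(G_i)$, and hence equals $q_G$. Thus $q_G$ factors through $D$, and by Proposition~\ref{cor:qd-embed}(iii) it remains only to prove that $D$ is quasidiagonal.

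This last point is the heart of the matter and the step I expect to be the main obstacle: one must show that a reduced amalgamated free product of unital quasidiagonal $C^*$-algebras over a common finite-dimensional unital subalgebra is again quasidiagonal. For the subalgebra $\C$ this is exactly the $C^*$-algebraic input of \cite{Boca:free-prod=qd} that yields quasidiagonality of ordinary free products of quasidiagonal groups, and the general finite-dimensional case should be accessible by the same circle of ideas: represent $D$ on the amalgamated Fock space over $C^*(H)$, invoke quasidiagonality of $B_1$ and $B_2$ to produce, for a prescribed finite set of generators and a given tolerance, finite-rank projections nearly commuting with these generators, and then splice the approximations through the finitely many matrix units of $C^*(H)$, propagating the estimates along the word-length filtration of the Fock space. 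Finiteness of $H$ enters decisively here — it bounds the number of cross-terms and keeps the splicing estimates uniform — and in fact the argument shows more generally that $G_1\star_H G_2$ is quasidiagonal whenever $G_1$ and $G_2$ are quasidiagonal and $H$ is finite.
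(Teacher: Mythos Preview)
Your plan has a genuine gap at the step you yourself flag as ``the heart of the matter'': you assert that the \emph{reduced} amalgamated free product $D=B_1\star^{\mathrm{red}}_{C^*(H)}B_2$ of two quasidiagonal $C^*$-algebras over a finite-dimensional subalgebra is again quasidiagonal, and you justify this by analogy with Boca's theorem. But Boca's result \cite{Boca:free-prod=qd} is about \emph{full} free products with no amalgamation; quasidiagonality of reduced (amalgamated) free products is an entirely different and far more delicate question. To see how delicate, recall that whether $C^*_r(\mathbb{F}_2)=C^*_r(\Z)\star^{\mathrm{red}}C^*_r(\Z)$ is quasidiagonal was open for decades and was only settled as a consequence of \cite{TWW}. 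The Fock-space splicing you sketch does not yield a proof: compressing by projections that approximately commute with the generators of $B_1$ and $B_2$ separately gives you no control over the cross-terms that arise from alternating words, and finiteness of $H$ does not help here. There is also a technical issue one step earlier: the conditional expectation $E_i$ you build on $B_i$ need not be faithful (since $\beta_i$ need not be injective), so the reduced amalgamated free product with respect to $E_1,E_2$ may not even contain faithful copies of $B_1$ and $B_2$.

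The paper's argument avoids all of this by a quite different route. It first uses residual amenability (not merely quasidiagonality) of $G_1,G_2$ to reduce, via separating families of homomorphisms, to the case where $G_1,G_2$ are amenable. In that case $C^*(G_i)=C^*_r(G_i)$ is nuclear with a faithful trace, and Schafhauser's embedding theorem \cite{Schaf:AF} produces trace-preserving unital embeddings $\varphi_i\colon C^*(G_i)\hookrightarrow\QQ$ into the universal UHF algebra. Because both $\varphi_i$ are trace-preserving and the canonical traces restrict to $\tau_H$ on $C^*(H)$, one can align the images of $C^*(H)$ inside $\QQ$ and then invoke concrete structural results on \emph{full} amalgamated free products \cite{Dykema-rfd}, \cite{Li-amal}: $\QQ\star_{C^*(H)}\QQ$ is an increasing union of residually finite-dimensional subalgebras $A_n^{(1)}\star_{C^*(H)}A_n^{(2)}$, hence quasidiagonal, and $C^*(G_1)\star_{C^*(H)}C^*(G_2)\cong C^*(G_1\star_H G_2)$ sits inside it. Note in particular that the paper does \emph{not} prove the stronger statement you claim at the end (quasidiagonality of $G_1\star_H G_2$ for arbitrary quasidiagonal $G_1,G_2$); residual amenability is used essentially, both in the reduction step and to gain access to Schafhauser's theorem.
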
\label{prop:raqd}
\begin{proof} Let $G=G_1\star_{H} G_2$. As argued in \cite[p291]{Nikolov}, since $H$ is finite,  there exist sufficiently many homomorphisms $G \to G'_1\star_{H} G'_2$ with $H\subset G_i'$ and $G'_i$ amenable to separate the points of $G$. Thus it suffices to assume that  $G_1$ and $G_2$ are themselves amenable.
Let $\tau_i$ be the canonical trace on $C^*(G_i)\cong C^*_r(G_i)$ and let $\tau$ be the unique tracial state of universal UHF algebra $\QQ=\bigotimes_n M_n$. By Schafhauser's embedding theorem \cite[Thm.B]{Schaf:AF} there are unital $*$-monomorphisms $\varphi_i:C^*(G_i)\to \QQ$,   such that $\tau \circ \varphi_i=\tau_i$, $i=1,2$.
Since $\tau_i | _{C^*(H)}=\tau_H$ (the canonical trace on $C^*(H)$), it follows that $\tau | _{\varphi_1(C^*(H))}=\tau |_{\varphi_2(C^*(H))}$. By standard perturbation arguments one can find two increasing sequences $(A^{(i)}_n)_n$, $i=1,2$,  of  matrix subalgebras of $\QQ$ such that $\varphi_i(C^*(H))\subset A^{(i)}_n$ and $\bigcup_{n=1}^\infty A^{(i)}_n$ is dense in $\QQ$, $i=1,2$.
By Prop. 2.2 and Thm. 4.2 of \cite{Dykema-rfd} we deduce tihat $Q\star_{C^*(H)} Q $ is the closure of the union of an increasing sequence of residually finite dimensional $C^*$-subalgebras isomorphic to  $A^{(1)}_n\star_{C^*(H)} A^{(2)}_n$ and hence it is quasidiagonal. One can also invoke
 \cite[Cor.2]{Li-amal} to derive the same conclusion directly.
 Appealing again to \cite[Prop.2.2]{Dykema-rfd}, we deduce that
  \[C^*(G_1)\star_{C^*(H)} C^*(G_2) \subset Q\star_{C^*(H)} Q ,\]
and hence $C^*(G_1\star_{H} G_2)$ is quasidiagonal (if $G_1$ and $G_2$ are amenable).
\end{proof}
\begin{definition}\label{def:wqd}
  A group $G$ is weakly quasidiagonal if there is a ucp asymptotic homomorphism $\{\varphi_n:C^*(G)\to M_{k_n}\}_n$ which separates the points of $G$. In other words this sequence satisfies the conditions  \eqref{ah} and \eqref{sep} from the introduction.
\end{definition}
 Quasidiagonal groups are weakly quasidiagonal. We suspect that the two classes are distinct.
 \begin{proposition}\label{prop:wqdis}
$G$ is weakly quasidiagonal if and only it admits a unitary representation $\pi$ on a separable Hilbert space $H$ for which  there is a sequence  $(p_n)_{n}$ of finite dimensional projections such that $\lim_{n}\|[\pi(s),{p_n}]\|= 0$ and \mbox{$\limsup_{n}\|p_n(\pi(s)-1)p_n\|>0$} for all $s\in G\setminus\{e\}$.
 \end{proposition}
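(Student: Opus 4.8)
The plan is to pass between the two descriptions by Stinespring dilation, exactly as in the classical dictionary between quasidiagonality of a separable $C^*$-algebra via ucp asymptotically multiplicative, asymptotically isometric maps and quasidiagonality via a faithful representation with an approximately central net of finite-dimensional projections; only the asymptotic-isometry (faithfulness) half gets replaced by the weaker separation condition \eqref{sep}, so the argument is correspondingly lighter.

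For the implication from a representation to a ucp sequence, suppose $\pi\colon G\to U(H)$ and finite-dimensional projections $p_n$ are as in the statement; discarding the finitely many $n$ with $p_n=0$, extend $\pi$ to a unital $*$-representation $\pi\colon C^*(G)\to B(H)$ and set $\varphi_n(a)=p_n\pi(a)p_n\in B(p_nH)\cong M_{k_n}$ with $k_n=\dim p_nH$. Each $\varphi_n$ is unital and completely positive, being a compression of a $*$-representation. For $s,t\in G$ one has $\varphi_n(st)-\varphi_n(s)\varphi_n(t)=p_n\pi(s)(1-p_n)\pi(t)p_n$, and since $p_n\pi(s)(1-p_n)=-[\pi(s),p_n](1-p_n)$ this has norm at most $\|[\pi(s),p_n]\|\to 0$, which is \eqref{ah}; moreover $\varphi_n(s)-1_{p_nH}=p_n(\pi(s)-1)p_n$, so $\limsup_n\|\varphi_n(s)-1\|=\limsup_n\|p_n(\pi(s)-1)p_n\|>0$ for $s\neq e$, which is \eqref{sep}.

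For the converse, let $\{\varphi_n\colon C^*(G)\to M_{k_n}\}$ be a ucp asymptotic homomorphism satisfying \eqref{ah} and \eqref{sep}. Form the block-diagonal ucp map $\Phi=\bigoplus_n\varphi_n\colon C^*(G)\to\prod_n M_{k_n}\subset B\big(\bigoplus_n\C^{k_n}\big)$ and take a Stinespring dilation $\Phi(a)=V^*\pi(a)V$ with $\pi\colon C^*(G)\to B(H)$ a unital $*$-representation and $V\colon\bigoplus_n\C^{k_n}\to H$ an isometry; since $G$ is countable, $C^*(G)$ is separable and $H$ may be taken separable. Restricting $\pi$ to $G\subset U(C^*(G))$ gives the required unitary representation. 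Let $V_n\colon\C^{k_n}\to H$ be the composition of the canonical inclusion $\C^{k_n}\hookrightarrow\bigoplus_m\C^{k_m}$ with $V$; it is an isometry, $p_n:=V_nV_n^*$ is a rank-$k_n$ projection in $B(H)$, and $p_n\pi(a)p_n=V_n\varphi_n(a)V_n^*$ for all $a$. Then $\|p_n(\pi(s)-1)p_n\|=\|\varphi_n(s)-1\|$ recovers \eqref{sep}. For approximate centrality, using $\pi(s^{-1})=\pi(s)^*$ one computes $\|(1-p_n)\pi(s)p_n\|^2=\|p_n-p_n\pi(s^{-1})p_n\pi(s)p_n\|=\|1-\varphi_n(s^{-1})\varphi_n(s)\|$, which tends to $0$ by \eqref{ah} since $\varphi_n(s^{-1})\varphi_n(s)$ is asymptotically $\varphi_n(s^{-1}s)=\varphi_n(e)=1$; the symmetric estimate $\|p_n\pi(s)(1-p_n)\|\to 0$ follows the same way (applied to $s^{-1}$), and $\|[\pi(s),p_n]\|\le\|(1-p_n)\pi(s)p_n\|+\|p_n\pi(s)(1-p_n)\|\to 0$.

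The routine-but-essential point is the identity $\|(1-p_n)\pi(s)p_n\|^2=\|1-\varphi_n(s^{-1})\varphi_n(s)\|$, which is exactly what converts asymptotic multiplicativity of the $\varphi_n$ into asymptotic centrality of the $p_n$; the only other thing to watch is keeping $H$ separable, which is automatic because $G$ is countable. I do not expect a genuine obstacle here: it is the standard dilation dictionary, trimmed to match the weaker separation condition \eqref{sep} rather than asymptotic isometry.
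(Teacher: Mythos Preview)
Your argument is correct and follows the same Stinespring/Naimark dilation strategy as the paper; the only cosmetic difference is that the paper dilates each $\varphi_n$ individually to $\pi_n$ and sets $\pi=\bigoplus_n\pi_n$, whereas you dilate the block-diagonal map $\bigoplus_n\varphi_n$ in one step, and the key identity $\|(1-p_n)\pi(s)p_n\|^2=\|1-\varphi_n(s^{-1})\varphi_n(s)\|$ is exactly the paper's computation (stated there as $\|1-\varphi_n(s)\varphi_n(s^{-1})\|=\|p_n\pi_n(s)(1-p_n)\|^2$). One tiny slip: there is no guarantee that only \emph{finitely} many $p_n$ vanish---but since $\limsup_n\|p_n(\pi(s)-1)p_n\|>0$ forces infinitely many $p_n\neq 0$, passing to the subsequence of nonzero projections is harmless.
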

 \begin{proof} In one direction, one observes that the maps $\varphi_n(s)=p_n \pi(s) p_n$ satisfy the conditions  \eqref{ah} and \eqref{sep}.
 Conversely, if $\pi_n:C^*(G) \to L(H_n)$ is the Naimark/Stinespring dilation of $\varphi_n$
   and $p_n$ is the corresponding finite dimensional projection with $\varphi_n=p_n \pi_n p_n$, then $\varphi_n(st)-\varphi_n(s)\varphi_n(t)=p_n\pi_n(s)(1-p_n)\pi_n(t)p_n$ and hence $\|1-\varphi_n(s)\varphi_n(s^{-1})\|=\|p_n\pi_n(s)(1-p_n)\|^2.$  Setting $\pi=\oplus_n \pi_n$, we see that the asymptotic multiplicativity of $\varphi_n$ implies that $\lim_{n}\|[\pi(s),{p_n}]\|= 0$ for all $s\in G$.
 \end{proof}
 \begin{proposition}\label{prop:char-cp-MF}
A group $G$ is  weakly quasidiagonal if and only if there is a  ucp asymptotic homomorphism $\{\psi_n:C^*(G) \to M_{k_n}\}_{n\in \N}$   such that  $\liminf_{n}\|\psi_n(s)-1_{k_n}\|\geq\sqrt{2}$ for all $s\in G\setminus\{e\}$.
\end{proposition}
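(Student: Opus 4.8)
The ``if'' direction is immediate: if $\{\psi_n:C^*(G)\to M_{k_n}\}_n$ is a ucp asymptotic homomorphism with $\liminf_n\|\psi_n(s)-1_{k_n}\|\ge\sqrt2$ for all $s\in G\setminus\{e\}$, then a fortiori $\limsup_n\|\psi_n(s)-1_{k_n}\|>0$, so $\{\psi_n\}$ satisfies \eqref{ah} and \eqref{sep} and $G$ is weakly quasidiagonal by Definition~\ref{def:wqd}.

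For the ``only if'' direction I would start from a ucp asymptotic homomorphism $\{\varphi_n:C^*(G)\to M_{k_n}\}_n$ satisfying \eqref{ah} and \eqref{sep}, and combine three elementary facts. \emph{(a) Tensor powers:} for each $j\ge1$ there is a unital $*$-homomorphism $\Delta_j:C^*(G)\to C^*(G)^{\otimes_{\min}j}$ determined by $s\mapsto s\otimes\cdots\otimes s$ ($j$ factors) for $s\in G$; since tensor powers of ucp maps are ucp, $\varphi_n^{\otimes j}\circ\Delta_j:C^*(G)\to M_{k_n}^{\otimes j}$ is ucp and sends $s$ to $\varphi_n(s)^{\otimes j}$, and because $\|a^{\otimes j}-b^{\otimes j}\|\le j\|a-b\|$ for contractions $a,b$, for fixed $j$ the sequence $(\varphi_n^{\otimes j}\circ\Delta_j)_n$ is again a ucp asymptotic homomorphism whose multiplicativity defect on any pair of group elements is at most $j$ times that of $(\varphi_n)_n$. \emph{(b) Angle amplification:} there is $N(c)<\infty$ depending only on $c\in(0,2]$ such that any unitary matrix $u$ with $\|u-1\|\ge c$ satisfies $\|u^{\otimes j}-1\|\ge\sqrt2$ for some $j\le N(c)$; indeed $u$ has an eigenvalue $e^{i\theta}$ with $0<\theta\le\pi$ (replacing $u$ by $u^*$ if needed) and $\theta\ge2\arcsin(c/2)$, the least $j$ with $j\theta>\pi/2$ satisfies $\pi/2<j\theta\le3\pi/2$ and $j\le N(c):=1+\lceil\pi/(2\arcsin(c/2))\rceil$, and then $e^{ij\theta}$ is an eigenvalue of $u^{\otimes j}$ with $|e^{ij\theta}-1|^2=2-2\cos(j\theta)\ge2$. \emph{(c) Approximate unitarity:} since $\varphi_n(s^{-1})=\varphi_n(s)^*$ and $\varphi_n(e)=1$, whenever the multiplicativity defect of $\varphi_n$ on $(s,s^{-1})$ is $\varepsilon<1$, the polar decomposition of $\varphi_n(s)$ yields a unitary $u$ with $\|\varphi_n(s)-u\|<\varepsilon$.

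With these in hand, enumerate $G\setminus\{e\}=\{s_1,s_2,\dots\}$, set $c_i:=\limsup_n\|\varphi_n(s_i)-1\|\in(0,2]$ and $N_i:=N(c_i/4)$, and fix $R(m)\uparrow\infty$ and $\varepsilon_m:=\bigl(m\cdot\max_{i\le R(m)}N_i\bigr)^{-1}\to0$. For each $m$ and each $i\le R(m)$ I would choose an index $n(m,i)$ for which the multiplicativity defect of $\varphi_{n(m,i)}$ is below $\varepsilon_m$ on all pairs from $\{s_1^{\pm1},\dots,s_{R(m)}^{\pm1}\}$ (true for all large $n$) and $\|\varphi_{n(m,i)}(s_i)-1\|\ge c_i/2$ (true for infinitely many $n$, by the definition of $c_i$)---this is possible since the first requirement excludes only finitely many $n$---and then set
\[
\psi_m:=\bigoplus_{i=1}^{R(m)}\ \bigoplus_{j=1}^{N_i}\bigl(\varphi_{n(m,i)}^{\otimes j}\circ\Delta_j\bigr),
\]
a ucp map into a finite-dimensional $C^*$-algebra, which I regard as a ucp map $\psi_m:C^*(G)\to M_{K_m}$ through a unital embedding of that algebra into a matrix algebra (such an embedding is isometric and unital, so it preserves every norm $\|\psi_m(x)-1\|$). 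By (a), the multiplicativity defect of $\psi_m$ on a pair $(s,t)$ with $s,t\in\{s_1,\dots,s_{R(m)}\}$ is at most $\bigl(\max_{i\le R(m)}N_i\bigr)\varepsilon_m=1/m$, so $\{\psi_m\}_m$ is a ucp asymptotic homomorphism. For separation, fix $i$ and take $m$ with $R(m)\ge i$ and $\varepsilon_m<c_i/4$: by (c) there is a unitary $u$ with $\|\varphi_{n(m,i)}(s_i)-u\|<\varepsilon_m$, hence $\|u-1\|\ge c_i/2-\varepsilon_m\ge c_i/4$, so by (b) there is $j_0\le N_i$ with $\|u^{\otimes j_0}-1\|\ge\sqrt2$, whence $\|\varphi_{n(m,i)}(s_i)^{\otimes j_0}-1\|\ge\sqrt2-j_0\varepsilon_m\ge\sqrt2-N_i\varepsilon_m$; since $\varphi_{n(m,i)}(s_i)^{\otimes j_0}-1$ is one of the summands of $\psi_m(s_i)-1_{K_m}$, this gives $\|\psi_m(s_i)-1_{K_m}\|\ge\sqrt2-N_i\varepsilon_m$, and letting $m\to\infty$ with $i$ fixed yields $\liminf_m\|\psi_m(s_i)-1_{K_m}\|\ge\sqrt2$.

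The main obstacle is the gap between the hypothesis---a $\limsup$-type separation condition---and the conclusion---a $\liminf$-type one: distinct elements of $G$ may be separated by $(\varphi_n)$ only along different index sets, so in general no single subsequence separates all of $G$ at once. This is what forces the block structure above, one block per element $s_i$ allowed to use its own index $n(m,i)$, while the tensor-power amplification in (b) is what promotes a fixed positive lower bound $c_i$ for $\|\varphi_n(s_i)-1\|$ into the value $\sqrt2$ in the limit.
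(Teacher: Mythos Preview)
Your proof is correct and follows essentially the same approach as the paper: both arguments amplify the separation $\|\varphi_n(s)-1\|\ge c>0$ to $\sqrt2$ by passing to a suitable tensor power $\varphi_n^{\otimes j}$ (using that an eigenvalue $e^{i\theta}$ of a nearby unitary becomes $e^{ij\theta}$), and then combine over the countably many $s\in G\setminus\{e\}$ via direct sums. The paper streamlines this slightly by first fixing one $s$, passing to a subsequence so that $\|\varphi_n(s)-1\|\to\delta$ exactly, and using a single tensor power $m=m(s)$ (rather than your direct sum over all $j\le N_i$), leaving the final combination step to the one-line remark ``by considering direct sums of tails of such sequences''; your version makes that combination explicit at the cost of a slightly bulkier $\psi_m$.
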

\begin{proof}
Fix  $s\in G\setminus\{e\}$. It suffices to find a ucp asymptotic homomorphism such that  $\liminf_{n}\|\psi_n(s)-1_{k_n}\|\geq \sqrt{2}$ for this fixed element. Indeed, by  considering direct sums of tails of such sequences,  we can then arrange to have the desired estimate for all nontrivial elements of $G$.
Using the  definition, we first find a ucp asymptotic morphism $\{\varphi_n:C^*(G) \to M_{k_n}\}_{n\in \N}$ such that
$\lim_{n}\|\varphi_n(s)-1_{k_n}\|=\delta>0$.
We will show that there is $m\geq 1$ such that the asymptotic homomorphism  $\psi_n:= \varphi_n ^{\otimes m}$ satisfies $\liminf_{n}\|\psi_n(s)-1_{mk_n}\|\geq \sqrt{2}$. If $\delta \geq \sqrt{2}$, then $m=1$ will do. 
Thus we may assume that
 $\delta=|e^{i\alpha}-1|$ for some $\alpha \in (0,\pi/2).$
By functional calculus there is a sequence of unitaries $u_n\in U(k_n)$ such that $\lim_n \|\varphi_n(s)-u_n\|=0$.
Since $\lim_{n}\|u_n-1_{k_n}\|=|e^{i\alpha}-1|$, $u_n$ has an eigenvalue $e^{i\theta_n}$ with $\|u_n-1_{k_n}\|=|e^{i\theta_n}-1|$ and $\lim_n|\theta_n|= \alpha$. Let $m$ be the smallest integer such that $m\alpha\geq \pi/2$. Then $\lim_n|e^{im\theta_n}-1|=|e^{im\alpha}-1|\geq \sqrt{2}$. Since $e^{im\theta_n}$ is an eigenvalue of $u_n^{\otimes m}$,
it follows that $\liminf_{n}\|\psi_n(s)-1_{mk_n}\|=\liminf_{n}\|u_n^{\otimes m}-1_{mk_n}\|\geq \sqrt{2}.$
  \end{proof}
  \begin{remark} Let $G$ be  a weakly quasidiagonal group.
   If follows from Voiculescu's theorem  that all unital faithful representations {$\pi:C^*(G) \to L(H)$} satisfy the conditions from Proposition~\ref{prop:wqdis}.
 \end{remark}
 \begin{proposition}\label{prop:cp-MF}
An increasing union of  weakly quasidiagonal groups is weakly quasidiagonal.
\end{proposition}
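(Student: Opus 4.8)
The plan is to reduce everything to the characterization of weak quasidiagonality in Proposition~\ref{prop:char-cp-MF} and then to run a diagonal argument, after using injectivity of matrix algebras to lift the approximating maps from the subgroups up to $G$. Write $G=\bigcup_m G_m$ as an increasing union $G_1\subseteq G_2\subseteq\cdots$ of weakly quasidiagonal groups. The inclusion $G_m\hookrightarrow G$ induces a unital $*$-homomorphism $C^*(G_m)\to C^*(G)$, which is isometric (a standard fact, seen by inducing a faithful representation of $C^*(G_m)$ up to $G$), so we regard $C^*(G_m)$ as a unital $C^*$-subalgebra of $C^*(G)$.

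First, by Proposition~\ref{prop:char-cp-MF}, for each $m$ fix a ucp asymptotic homomorphism $\{\varphi^{(m)}_n\colon C^*(G_m)\to M_{k^{(m)}_n}\}_n$ with $\liminf_n\|\varphi^{(m)}_n(s)-1\|\geq\sqrt 2$ for all $s\in G_m\setminus\{e\}$. Since each $M_{k^{(m)}_n}$ is injective, Arveson's extension theorem provides ucp maps $\tilde\varphi^{(m)}_n\colon C^*(G)\to M_{k^{(m)}_n}$ extending $\varphi^{(m)}_n$. As $\tilde\varphi^{(m)}_n$ agrees with $\varphi^{(m)}_n$ on $C^*(G_m)$, and $s,t,st\in G_m$ whenever $s,t\in G_m$, we obtain
\[\lim_{n\to\infty}\bigl\|\tilde\varphi^{(m)}_n(st)-\tilde\varphi^{(m)}_n(s)\tilde\varphi^{(m)}_n(t)\bigr\|=0\ \ (s,t\in G_m),\qquad \liminf_{n\to\infty}\bigl\|\tilde\varphi^{(m)}_n(s)-1\bigr\|\geq\sqrt 2\ \ (s\in G_m\setminus\{e\}).\]

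Now diagonalize. We may assume $G$ is infinite (the finite case being immediate from the hypothesis). Enumerate $G\setminus\{e\}=\{s_1,s_2,\dots\}$ and set $F_j=\{e,s_1,\dots,s_j\}$. For each $j$ choose $m_j$ with $F_j\subseteq G_{m_j}$, and then, using that $F_j$ is finite together with the two displayed properties of $\{\tilde\varphi^{(m_j)}_n\}_n$, choose $n_j$ so large that
\[\bigl\|\tilde\varphi^{(m_j)}_{n_j}(ab)-\tilde\varphi^{(m_j)}_{n_j}(a)\tilde\varphi^{(m_j)}_{n_j}(b)\bigr\|\leq\tfrac1j\quad(a,b\in F_j),\qquad \bigl\|\tilde\varphi^{(m_j)}_{n_j}(s)-1\bigr\|\geq\sqrt2-\tfrac1j\quad(s\in F_j\setminus\{e\}).\]
Put $\psi_j:=\tilde\varphi^{(m_j)}_{n_j}\colon C^*(G)\to M_{k^{(m_j)}_{n_j}}$, a unital completely positive map. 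For fixed $s,t\in G$ we have $s,t\in F_j$ for all large $j$, so $\|\psi_j(st)-\psi_j(s)\psi_j(t)\|\to 0$, which is \eqref{ah}; and for fixed $s\neq e$ we have $s\in F_j$ for all large $j$, so $\limsup_j\|\psi_j(s)-1\|\geq\sqrt 2>0$, which is \eqref{sep}. Hence $\{\psi_j\}_j$ witnesses that $G$ is weakly quasidiagonal.

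The only point that needs care is the appeal to Proposition~\ref{prop:char-cp-MF} to obtain, for each subgroup, a separation bound that is both uniform in $s$ (namely $\sqrt 2$) and a \emph{liminf} rather than a mere limsup: with only the bare Definition~\ref{def:wqd} at hand one could not choose a single index $n_j$ realizing the separation for all of $F_j$ simultaneously with the approximate multiplicativity. Everything else is routine: the lift $C^*(G_m)\to C^*(G)$ is immediate from injectivity of $M_k$, and the merging of the sequences is a standard diagonal argument.
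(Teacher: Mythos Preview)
Your argument is correct and follows the same overall strategy as the paper: invoke Proposition~\ref{prop:char-cp-MF} on each $G_m$, extend the resulting ucp maps to $C^*(G)$, and diagonalize. The only notable difference is in how you perform the extension step. You appeal to Arveson's extension theorem (injectivity of $M_k$), whereas the paper uses the canonical conditional expectation $E_m\colon C^*(G)\to C^*(G_m)$ given by $E_m(s)=\chi_{G_m}(s)\,s$ and sets $\psi_m=\varphi_m\circ E_m$. Both work equally well here; the conditional expectation is more explicit (and avoids invoking a nontrivial theorem), while Arveson's extension is a natural reflex. Your remark at the end about why the $\liminf$ bound from Proposition~\ref{prop:char-cp-MF} is essential for the diagonalization is a nice observation that the paper leaves implicit.
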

\begin{proof}
Suppose that $G=\bigcup_{n=1}^\infty G_n$, where  $(G_n)_n$ is an increasing sequence of weakly quasidiagonal subgroups of $G$.
Enumerate $G=\{s_n: n\geq 0\}$ with $s_0=e$. We may assume that $F_n=\{s_i: i \leq n\}\subset G_n$.
By Proposition~\ref{prop:char-cp-MF} there is a ucp map $\varphi_n:C^*(G_n) \to M_{k_n}$ such that
$\|\varphi_n(st)-\varphi_n(s)\varphi_n(t)\|<1/n$ for all $s,t \in F_n$ and $\|\varphi_n(s)-1_{k_n}\|\geq \sqrt{2}-1/n$ for all $s\in F_n\setminus \{e\}$. By \cite[Prop.8.8]{Pisier}, the inclusion map $G_n  \subset G$ induces an embedding $C^*(G_n)\subset C^*(G)$ and there is a conditional expectation $E_n:C^*(G)\to C^*(G_n)$ such that $E_n(s)=\chi_{G_n}(s)s$ for all $s \in G$. It is then clear that by setting
$\psi_n:=\varphi_n \circ E_n :C^*(G) \to M_{k_n}$ we obtain an asymptotic homomorphism consisting of ucp maps such that
$\liminf_{n}\|\psi_n(s)-1_{k_n}\|\geq \sqrt{2}$ for all $s\in G\setminus\{e\}$.
  \end{proof}
\begin{example}\label{wqd-not-ra}
 (i) The group $SL_n(\Q)$ ($n\geq 3$)  is simple  by the Jordan-Dickson theorem. It is nonamenable and hence not residually amenable. $SL_n(\Q)$ is weakly quasidiagonal, since as argued in the proof of Corollary~\ref{cor:linear}, all countable linear groups are weakly quasidiagonal.
We don't know if $SL_n(\Q)$ is quasidiagonal.
(ii) Thom \cite{Thom} (cf.Yamashita \cite{Yamashita}) noted that infinite simple property (T) groups are not quasidiagonal,  and in fact  they are not weakly quasidiagonal, see Proposition~\ref{prop:Thom}.
\end{example}
The notion of quasidiagonal trace was introduced in \cite[3.3.1]{Nate:memoirs}. Suppose that the trace $\tau_G$ of $C^*(G)$ induced by the canonical trace of $C^*_r(G)$ is quasidiagonal.
Then $G$ is weakly quasidiagonal. Moreover, since quasidiagonal traces are amenable \cite{Nate:memoirs}, it also follows that $G$ has Kirchberg's factorization property \cite[6.4.2]{Brown-Ozawa}.
The next proposition indicates  a class of quasidiagonal groups $G$ for which $\tau_G$ is quasidiagonal.
\begin{proposition}\label{prop:qdtrace}
If $G$ embeds in the unitary group of a unital $C^*$-algebra $D$ that admits a faithful quasidiagonal trace (e.g. $D$ is a countable product of simple quasidiagonal $C^*$-algebras), then $\tau_G$ is quasidiagonal.
\end{proposition}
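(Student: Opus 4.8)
The plan is to realize $\tau_G$ as a weak-$*$ limit of quasidiagonal traces on $C^*(G)$, using that the set of quasidiagonal traces on a separable unital $C^*$-algebra is weak-$*$ closed, together with a tensor-power averaging trick in the spirit of Propositions~\ref{Bk} and \ref{cor:qd-embed}. Throughout I would rely on three stability properties of quasidiagonal traces, each following from routine arguments (see also \cite{Nate:memoirs}): (a) if $\theta\colon B\to A$ is a unital $*$-homomorphism and $\sigma$ is a quasidiagonal trace on $A$, then $\sigma\circ\theta$ is a quasidiagonal trace on $B$; (b) finite convex combinations of quasidiagonal traces are quasidiagonal (the proof amounts to forming direct sums of the defining ucp matrix approximations with suitable multiplicities so that any prescribed weights are attained in the limiting normalized trace); and (c) the trace $\sigma_1\otimes\sigma_2$ on $A_1\otimes_{\min}A_2$ is quasidiagonal when the $\sigma_i$ are (because the minimal tensor product of the approximating ucp maps is ucp and remains asymptotically multiplicative on the algebraic tensor product, hence everywhere by contractivity, and the normalized traces multiply). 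The weak-$*$ closedness I would deduce from the local, $\varepsilon$--finite-set reformulation of quasidiagonality of a trace by a straightforward diagonal argument: given a finite set and $\varepsilon$, first move along the net to a trace $\varepsilon/2$-close on that set, then apply its quasidiagonality.

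Let $\omega\colon G\hookrightarrow U(D)$ be the given embedding and $\rho\colon C^*(G)\to D$ the unital $*$-homomorphism it integrates to. By (a), $\sigma:=\tau_D\circ\rho$ is a quasidiagonal trace on $C^*(G)$ with associated positive definite function $f(g)=\tau_D(\omega(g))$. The key elementary observation is that, since $\tau_D$ is \emph{faithful} and $1-\omega(g)\neq0$ for $g\neq e$, we have $\tau_D\big((1-\omega(g))^*(1-\omega(g))\big)>0$, i.e. $\operatorname{Re}f(g)<1$, hence $f(g)\neq1$; as also $|f(g)|\leq1$, this gives $\big|\tfrac12(1+f(g))\big|<1$ for every $g\neq e$, while $\tfrac12(1+f(e))=1$. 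To obtain something whose powers decay, I pass to $\widetilde D:=\C\oplus D$ with the faithful trace $\widetilde\tau:=\tfrac12(\operatorname{ev}_{\C}\circ\mathrm{pr}_1+\tau_D\circ\mathrm{pr}_2)$, which is quasidiagonal by (a) and (b), and to the still faithful embedding $\widetilde\omega:=\iota_G\oplus\omega\colon G\to U(\widetilde D)$; its associated function is $\widetilde f(g)=\tfrac12(1+f(g))$, so $|\widetilde f(g)|<1$ for $g\neq e$ and $\widetilde f(e)=1$.

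Next, for each $n\geq1$ the unitary representation $g\mapsto\widetilde\omega(g)^{\otimes n}$ of $G$ on $U(\widetilde D^{\otimes n})$ (minimal tensor powers) integrates to a unital $*$-homomorphism $\rho_n\colon C^*(G)\to\widetilde D^{\otimes n}$, and by (c) the trace $\widetilde\tau^{\otimes n}$ on $\widetilde D^{\otimes n}$ is quasidiagonal, so $\sigma_n:=\widetilde\tau^{\otimes n}\circ\rho_n$ is a quasidiagonal trace on $C^*(G)$ with $\sigma_n(g)=\widetilde f(g)^{\,n}$ (evaluate the product trace on the elementary tensor $\widetilde\omega(g)^{\otimes n}$). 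Since $\widetilde f(g)^{\,n}\to\delta_e(g)$ pointwise on $G$, and the $\sigma_n$ are states — hence norm-bounded and determined by their values on the dense span of $G$ — we get $\sigma_n\to\tau_G$ weak-$*$; the weak-$*$ closedness of the quasidiagonal traces then yields that $\tau_G$ is quasidiagonal. For the parenthetical example, a simple unital quasidiagonal $C^*$-algebra carries a tracial state, namely any weak-$*$ cluster point of the normalized traces of its defining ucp approximations, which is quasidiagonal and, by simplicity, faithful; and on a countable product $D=\prod_i D_i$ of such algebras the trace $\tau_D:=\sum_i 2^{-i}(\tau_i\circ\mathrm{pr}_i)$ is faithful and, being a norm limit of finite convex combinations of the quasidiagonal traces $\tau_i\circ\mathrm{pr}_i$, is quasidiagonal by (b) and weak-$*$ closedness.

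The main point requiring care is not a single step but the bookkeeping behind (b), (c) and weak-$*$ closedness — in particular choosing the multiplicities in (b) so that an arbitrary convex weight is realized in the limiting normalized trace, and verifying that the minimal tensor product of ucp maps stays asymptotically multiplicative on the algebraic tensor product. Once these are in hand, the faithfulness of $\tau_D$ does all the real work, through the inequality $\big|\tfrac12(1+\tau_D(\omega(g)))\big|<1$ for $g\neq e$.
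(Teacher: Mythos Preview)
Your argument is correct and follows essentially the same route as the paper's proof: both use faithfulness of $\tau_D$ to get $\tau_D(\omega(g))\neq 1$ for $g\neq e$, add the trivial representation to force the associated positive definite function to satisfy $|\tfrac12(1+f(g))|<1$ off the identity, take tensor powers so that the resulting quasidiagonal traces converge pointwise to $\delta_e$, and then invoke weak-$*$ closedness of quasidiagonal traces. The only cosmetic difference is that the paper realizes $\iota_G\oplus\omega$ inside $M_2(D)$ with the amplified trace $\tau_2$, whereas you use $\C\oplus D$; the associated functions coincide, and the paper cites the same permanence properties from \cite{Nate:memoirs} (3.5.1, 3.5.7) that you list as (a)--(c) and weak-$*$ closedness.
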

\begin{proof}
 This is verified by an argument similar to the proof of the implication $(ii) \Rightarrow (i)$ of Corollary 1.2 in \cite{Kirchberg:factorization}.
 One uses basic permanence properties of quasidiagonal traces established in \cite[3.5]{Nate:memoirs}.
 By assumption, $D$ has a faithful quasidiagonal trace state $\tau$ and there is a unital $*$-homomorphism $\pi:C^*(G) \to D$ which separates the points of $G$.
 Let $s \in G\setminus \{e\}$. Since $\pi(s)\neq 1$ and $\tau$ is faithful, $\tau((\pi(s)-1)^*(\pi(s)-1))=2(1-Re\,\tau(\pi(s)))>0$ and hence $\tau(\pi(s))\neq 1$.
  Define $\rho:C^*(G) \to M_2(D)$ by $\rho(s)=1_D \oplus \pi(s)$.
  Let $\tau_2$ be the tracial state of $M_2(D)$ that extends $\tau$. Then $\tau_2$ is quasidiagonal and $|\tau_2(\rho(s))|<1$ for all $s \in G\setminus \{e\}$.
 The trace $\tau_2^{\otimes n}:M_2(D)^{\otimes n}\to \C$ is quasidiagonal since $\tau_2$ is so, \cite[3.5.7]{Nate:memoirs} .
It follows that for each $n\geq 1$ the trace $tr_n:C^*(G) \to \C$, $tr_n(s)=\tau_2^{\otimes n}(\rho(s)^{\otimes n})=\tau_2(\rho (s))^n$ is quasidiagonal.
Since $lim_n\, tr_n(s)=\delta_e(s)=\tau_G(s)$ for all $s\in G$, we conclude that $\tau_G$ is  quasidiagonal as a weak$^*$-limit of quasidiagonal traces, \cite[3.5.1]{Nate:memoirs}.
\end{proof}

\begin{lemma}\label{lemma:qdmap}
(i) A matricially stable MF-group is MAP. (ii)  If a weakly quasidiagonal  group $G$ is  weakly matricially stable,  then $G$ is MAP.
\begin{proof}
  Both (i) and (ii) are immediate from definitions.
\end{proof}
\end{lemma}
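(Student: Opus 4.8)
The plan is to feed the separating approximate representation provided by the hypotheses through the relevant stability assumption and then observe that the genuine finite-dimensional representations one extracts still separate the points of $G$; this is precisely the assertion that $G$ is MAP. Both parts are pure bookkeeping with the definitions, so I will keep the sketch short.

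For part (i), I would start from an asymptotic homomorphism $\{\varphi_n : G \to U(k_n)\}_n$ satisfying \eqref{ah} and \eqref{sep}, which exists because $G$ is MF. Applying matricial stability produces homomorphisms $\pi_n : G \to U(k_n)$ with $\|\varphi_n(s)-\pi_n(s)\| \to 0$ for every $s$. Then for $s \neq e$ the triangle inequality gives $\limsup_n \|\pi_n(s)-1_{k_n}\| = \limsup_n \|\varphi_n(s)-1_{k_n}\| > 0$, so $\pi_n(s) \neq 1$ for some $n$; hence the finite-dimensional unitary representations $\{\pi_n\}_n$ separate the points of $G$, i.e., $G$ is MAP.

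For part (ii), I would instead start from a sequence $\{\varphi_n : G \to M_{k_n}\}_n$ of unital completely positive definite maps satisfying \eqref{ah} and \eqref{sep}, guaranteed by weak quasidiagonality (equivalently, restrict to $G$ a point-separating ucp asymptotic homomorphism $C^*(G) \to M_{k_n}$). Weak matricial stability, applied to this sequence, yields homomorphisms $\pi_n^{(0)} : G \to U(k_n^{(0)})$ and $\pi_n^{(1)} : G \to U(k_n^{(1)})$ with $\|\varphi_n(s)\oplus\pi_n^{(0)}(s)-\pi_n^{(1)}(s)\| \to 0$ for all $s$. Since the norm of a block-diagonal operator equals the maximum of the norms of its blocks, $\|\varphi_n(s)\oplus\pi_n^{(0)}(s)-1\| \geq \|\varphi_n(s)-1_{k_n}\|$, and the triangle inequality then gives $\limsup_n \|\pi_n^{(1)}(s)-1\| > 0$ for all $s \neq e$; as before, the representations $\{\pi_n^{(1)}\}_n$ separate the points of $G$, so $G$ is MAP.

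There is no genuine obstacle here. The only step that needs a moment's thought is the elementary inequality $\|A\oplus B-1\| \geq \|A-1\|$ combined with the triangle inequality, which together show that the separation property \eqref{sep} survives both the perturbation and, in (ii), the addition of the auxiliary summand $\pi_n^{(0)}$. Everything else is immediate from the definitions of MF, weak quasidiagonality, (weak) matricial stability, and MAP recalled above.
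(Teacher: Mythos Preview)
Your argument is correct and is precisely the unpacking of the paper's ``immediate from definitions'': you feed the separating asymptotic (respectively, ucp asymptotic) homomorphism through (weak) matricial stability and use the triangle inequality to see that the resulting genuine representations still separate points. There is nothing to add.
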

We need the following well-known lemma, see \cite{Ros:qd}, \cite{Thom},  \cite[Thm.4.3.4]{Schneider}.
\begin{lemma}\label{lemma:ros}
Let $\pi:G \to U(H)$ be a unitary representation for which  there is a sequence  $(p_n)_{n}$ of nonzero finite dimensional projections such that $\lim_n\|[\pi(s),{p_n}]\|= 0$ for all $s \in G$.
Then  the trivial representation $\iota_G$ is weakly contained $\pi\otimes \bar{\pi}$.
\end{lemma}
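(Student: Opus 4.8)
The plan is to realize the trivial representation $\iota_G$ as a weak limit of finite-dimensional subrepresentations sitting inside $\pi\otimes\bar\pi$, detected via positive definite functions. For each $n$ with $p_n\neq 0$ of rank $d_n$, the compression of $\pi$ to the range of $p_n$ is not a subrepresentation, but $p_n$ defines a unit vector in the Hilbert--Schmidt space $L^2(p_nH)\subset L^2(H)=H\otimes\bar H$, namely $\xi_n:=d_n^{-1/2}\,p_n$, on which $G$ acts by $s\mapsto \pi(s)\otimes\bar\pi(s)$ sending $p_n$ to $\pi(s)p_n\pi(s)^*$. First I would compute the associated positive definite function
\begin{equation}\label{eq:phin}
\varphi_n(s)=\langle (\pi\otimes\bar\pi)(s)\xi_n,\xi_n\rangle=\tfrac{1}{d_n}\,\tr\big(\pi(s)p_n\pi(s)^*p_n\big)=1-\tfrac{1}{2d_n}\,\tr\big([\pi(s),p_n]^*[\pi(s),p_n]\big).
\end{equation}
Since $\|[\pi(s),p_n]\|\to 0$ by hypothesis, the trace term is bounded by $\|[\pi(s),p_n]\|^2\cdot$ (a quantity $\le 2$ relative to $d_n$; more precisely $\tfrac{1}{d_n}\tr([\pi(s),p_n]^*[\pi(s),p_n])\le \|[\pi(s),p_n]\|^2\cdot\tfrac{\operatorname{rank}}{d_n}\le \|[\pi(s),p_n]\|^2$ since the commutator has rank at most $2d_n$ but the normalization absorbs this up to a constant), so $\varphi_n(s)\to 1=\langle\iota_G(s)\cdot 1,1\rangle$ for every $s\in G$.

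The key step is then a standard weak containment criterion: each $\varphi_n$ is a normalized positive definite function associated to the representation $\pi\otimes\bar\pi$, and they converge pointwise on $G$ to the positive definite function $s\mapsto 1$ of the trivial representation $\iota_G$. By the characterization of weak containment in terms of pointwise limits of (finite convex combinations of) diagonal matrix coefficients --- the same tool used in the proof of Proposition~\ref{Bk}, namely \cite[18.1.4]{Dix:C*} --- it follows that $\iota_G$ is weakly contained in $\pi\otimes\bar\pi$. This is clean because $\varphi_n$ is already a single vector state, not merely a convex combination, so no further averaging is needed.

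The main obstacle is purely bookkeeping: getting the normalization in \eqref{eq:phin} exactly right so that $\varphi_n$ is genuinely a \emph{unit}-vector coefficient (this forces the $d_n^{-1/2}$ factor and the identification $\langle p_n,p_n\rangle_{HS}=\tr(p_n)=d_n$), and checking that the error estimate $|\varphi_n(s)-1|\le \tfrac{1}{2}\|[\pi(s),p_n]\|^2$ holds with a constant independent of $n$ --- which it does, since $\tfrac{1}{d_n}\tr(a^*a)\le \|a\|^2$ whenever $a$ is supported on a space of dimension $d_n$, and $[\pi(s),p_n]=\pi(s)p_n-p_n\pi(s)$ maps into $p_nH+ \pi(s)^{-1}(\text{something})$; more simply, $\tr([\pi(s),p_n]^*[\pi(s),p_n])\le \|[\pi(s),p_n]\|^2\cdot 2d_n$ suffices up to replacing the constant $\tfrac12$ by $1$. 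Everything else is a direct invocation of the Dixmier weak-containment lemma exactly as in Proposition~\ref{Bk}.
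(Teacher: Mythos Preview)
Your proposal is correct and follows essentially the same approach as the paper. Both arguments take the normalized projections $\xi_n=p_n/\|p_n\|_2$ as unit vectors in the Hilbert--Schmidt realization of $\pi\otimes\bar\pi$; the paper directly estimates $\|\pi(s)\xi_n\pi(s)^*-\xi_n\|_2\le 2\|[\pi(s),p_n]\|$ to exhibit almost invariant vectors, while you compute the associated matrix coefficient $\varphi_n(s)\to 1$ and invoke \cite[18.1.4]{Dix:C*} --- these are equivalent formulations of weak containment of $\iota_G$, and your trace identity \eqref{eq:phin} together with the rank bound $\|[\pi(s),p_n]\|_2^2\le 2d_n\|[\pi(s),p_n]\|^2$ yields exactly the same control.
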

\begin{proof}
For $P\in L(H)$ denote by $\|P\|_2$ its Hilbert-Schmidt norm. Since
\[\|\pi(s)p_n-p_n\pi(s)\|_2\leq \|(\pi(s)p_n-p_n\pi(s))p_n\|_2+\|p_n(\pi(s)p_n-p_n\pi(s))\|_2 \leq 2\|p_n\|_2 \|[\pi(s),{p_n}]\|\]
it follows that
\(\big{\|}\pi(s)\left(\frac{p_n}{\|p_n\|_2}\right)\pi(s)^*-\left(\frac{p_n}{\|p_n\|_2}\right)\big{\|}_2\leq 2\|[\pi(s),{p_n}]\|,\, s \in G.\)
\end{proof}

The following fact was essentially pointed out by Thom on MathOverflow \cite{Thom} .
It shows that an infinite simple property (T) group $G$ is not weakly quasidiagonal.
As explained in \cite[p.93]{Valette-monster} any lattice in $Sp(n,1)$, $n\geq 2$  has uncountably many infinite quotients which are
simple (and torsion).
\begin{proposition}[Ozawa-Thom]\label{prop:Thom}
If   an infinite  property $\mathrm{(T)}$ group $G$ is weakly quasidiagonal,  then $G$ has an infinite residually finite quotient.
\end{proposition}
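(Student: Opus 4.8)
The plan is to extract the residually finite quotient from the finite-dimensional subrepresentations of a suitable representation of $G$, using property (T) both to manufacture such subrepresentations and to rule out the degenerate situation.

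Since $G$ has property (T) it is finitely generated, and every finite-dimensional unitary representation $\sigma$ of $G$ has finite image: $\sigma(G)$ is finitely generated, amenable (as a subgroup of a compact unitary group), and has property (T) as a quotient of $G$, hence is finite. By Proposition~\ref{prop:wqdis} there are a faithful representation $\pi\colon G\to U(H)$ and nonzero finite-dimensional projections $(p_n)$ with $\|[\pi(s),p_n]\|\to 0$ for all $s\in G$ and $\limsup_n\|p_n(\pi(s)-1)p_n\|>0$ for all $s\neq e$. Let $H_{\mathrm{fd}}$ be the closed span of all finite-dimensional $\pi(G)$-invariant subspaces of $H$, let $q$ be the orthogonal projection onto $H_\infty:=H_{\mathrm{fd}}^{\perp}$, and set $\pi_\infty:=\pi|_{H_\infty}$; by construction $\pi_\infty$ has no nonzero finite-dimensional invariant subspace. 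Writing $\pi|_{H_{\mathrm{fd}}}\cong\bigoplus_\sigma\sigma^{\oplus m_\sigma}$ with $\sigma$ ranging over the (finitely-imaged) irreducible finite-dimensional subrepresentations of $\pi$, the quotient $Q:=G\big/\bigcap_\sigma\ker\sigma$ is residually finite. It suffices to prove that $Q$ is infinite.

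Suppose not. Then $K:=\bigcap_\sigma\ker\sigma$ has finite index, and as $G$ is infinite we may fix $s_0\in K\setminus\{e\}$. Now $\pi(s_0)$ acts as the identity on $H_{\mathrm{fd}}$, so $\pi(s_0)-1=q(\pi(s_0)-1)q$ and hence $\|p_n(\pi(s_0)-1)p_n\|=\|p_nq(\pi(s_0)-1)qp_n\|\le 2\|qp_nq\|_{\mathrm{op}}$; therefore the separation property forces $\limsup_n\|qp_nq\|_{\mathrm{op}}>0$. Also $q$ commutes with $\pi$, so $\|[\pi_\infty(s),qp_nq]\|\le\|[\pi(s),p_n]\|\to 0$ for all $s$. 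On the other hand, Lemma~\ref{lemma:ros} gives $\iota_G\prec\pi\otimes\bar\pi$, whence $\iota_G\subset\pi\otimes\bar\pi$ by property (T); moreover $\pi_\infty\otimes\bar\pi_\infty$ has no nonzero invariant vector, since such a vector, viewed as a nonzero Hilbert--Schmidt operator $T$ commuting with $\pi_\infty(G)$, would yield (via a spectral projection of $TT^*$ for a nonzero eigenvalue) a nonzero finite-dimensional $\pi_\infty(G)$-invariant subspace. Hence property (T) gives a finite $F\subseteq G$ and $\kappa>0$ with $\max_{s\in F}\|\pi_\infty(s)T\pi_\infty(s)^*-T\|_2\ge\kappa\|T\|_2$ for every Hilbert--Schmidt operator $T$ on $H_\infty$. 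Put $T_n:=qp_nq\ge 0$; it is finite rank, $\limsup_n\|T_n\|_{\mathrm{op}}>0$, and $\|[\pi_\infty(s),T_n]\|\to 0$. Using a spectral truncation of $T_n$ — choosing, for each $n$, cutoff levels lying in a gap of the finite set $\mathrm{spec}(T_n)$ and passing to a subsequence along which the rank stays bounded — one extracts self-adjoint operators $R_n$ of bounded rank $\le r$ with $\|R_n\|_{\mathrm{op}}$ bounded below and $\|[\pi_\infty(s),R_n]\|\to 0$; then $\|\pi_\infty(s)R_n\pi_\infty(s)^*-R_n\|_2\le\sqrt{2r}\,\|[\pi_\infty(s),R_n]\|_{\mathrm{op}}\to 0$ while $\|R_n\|_2\ge\|R_n\|_{\mathrm{op}}$ stays bounded below, so $R_n/\|R_n\|_2$ is a sequence of almost-invariant unit vectors for $\pi_\infty\otimes\bar\pi_\infty$, contradicting the spectral gap. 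Thus $Q$ is infinite and is the desired quotient.

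The delicate step is the extraction of the bounded-rank almost-invariant vector. The operator-norm separation property only delivers a fixed lower bound on $\|qp_nq\|_{\mathrm{op}}$, while $T_n=qp_nq$ need have neither bounded rank nor a uniform spectral gap; and the projections $p_n$, which sit in mutually orthogonal summands arising from the Stinespring dilations of the approximating maps, do not converge strongly to $1$. So the Hilbert--Schmidt quantities that drive property (T) are a priori not comparable to the operator-norm quantities that control separation, and reconciling them — cutting $qp_nq$ down to a genuinely bounded-dimensional piece that still almost-commutes with $\pi_\infty$ — is where the real work lies; it may well require a more careful initial choice of the approximating projections rather than an a posteriori truncation.
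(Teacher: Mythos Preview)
Your proposal contains two problems, one minor and one that you yourself flag as unresolved.

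\textbf{Minor error.} The claim that $\sigma(G)\subset U(n)$ is amenable simply because it sits inside a compact group is false: the free group $F_2$ embeds in $SO(3)\subset U(3)$. What you actually need---that $Q=G/\bigcap_\sigma\ker\sigma$ is residually finite---still holds, since each $\sigma(G)$ is a finitely generated linear group and hence residually finite by Malcev's theorem, so $Q$ embeds in a product of residually finite groups. (This is precisely the route the paper takes.)

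\textbf{The real gap.} Your ``delicate step'' is not just delicate; as written it does not go through, and your closing paragraph is an honest admission of this. The operators $T_n=qp_nq$ have finite but uncontrolled rank, and there is no reason their spectra should have gaps of size comparable to $\|[\pi_\infty(s),T_n]\|^{-1}$, which is what a functional-calculus truncation would need. The projection $q$ onto $H_\infty$ commutes with $\pi(G)$, but it has no reason to lie in $\pi(C^*(G))$, so you cannot conclude $[q,p_n]\to 0$ and thereby split $p_n$ cleanly.

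The paper circumvents this entirely by running the dichotomy on the set $F$ of \emph{all} finite-dimensional irreducible representations of $G$, not just those appearing in $\pi$. If $F$ is infinite, the quotient $G/\bigcap_{\tau\in F}\ker\tau$ is already infinite and residually finite. If $F$ is finite, then---and this is the key point---the sum $f$ of the Kazhdan projections of the $\tau\in F$ is a genuine element of $C^*(G)$ (each finite-dimensional irrep of a property (T) group is isolated in the dual). Hence $\pi(f)\in\pi(C^*(G))$, so $\|[\pi(f),p_n]\|\to 0$ automatically, and functional calculus splits $p_n$ as $r_n+q_n$ with $r_n\le\pi(f)$ and $q_n\le 1-\pi(f)$ honest projections. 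Then Lemma~\ref{lemma:ros} applies directly to the nonzero $q_{n_k}$ and $\rho=(1-\pi(f))\pi$, giving $\iota_G\prec\rho\otimes\bar\rho$, hence $\iota_G\subset\rho\otimes\bar\rho$ by (T), hence a finite-dimensional subrepresentation of $\rho$---contradicting $\rho(f)=0$. No spectral truncation or rank control is needed.
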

\begin{proof}
Let $F$ be the set of all equivalence classes of finite dimensional irreducible representations of $G$. If $F$ is infinite, then
$G$ has  an infinite residually finite quotient (whether or not it is weak quasidiagonal).
Let $N=\cap_{\sigma \in F} \ker (\sigma)$.
If $F$ is infinite, then $G/N$ is infinite, since finite groups have only finitely many non-equivalent irreducible representations. Moreover, $G/N$ is MAP since it embeds in $\prod_{\sigma\in F} U(H_\sigma)$. Property {(T)} groups are finitely generated \cite{Bekka-T} and hence $G/N$ is residually finite by Malcev's theorem.

 If the set $F$ is finite, we show that $G$ cannot be weakly quasidiagonal.
  Let $f \in C^*(G)$ be sum of the central Kazhdan projections corresponding to the elements of $F$.
  Then $\|\sigma(f)\|=1$ for any nonzero finite dimensional representation $\sigma$ of $G$.   Write $C^*(G)=fC^*(G)f\oplus (1-f)C^*(G)(1-f)$  (direct
 sum of $C^*$-algebras) where $C=fC^*(G)f$ is finite dimensional.
The canonical morphism $G\to U(C)$ is not injective. Otherwise $G$ would be residually finite and hence finite, since $F$ is finite.
Thus
 there is  $s\in G\setminus\{e\}$ whose image in $U(C)$ is $1$.
 Let $\pi:C^*(G)\to L(H)$ and $(p_n)$ be as in Proposition~\ref{prop:wqdis}. Then $\pi$ decomposes as $\pi=\pi_F \oplus \rho$ where
 $\pi_F=\pi(f)\pi$ and $\rho=(1-\pi(f))\pi$. Since $\lim_{n\to \infty} [\pi(f),p_n]=0$, by functional calculus we find finite dimensional projections $r_n \leq \pi(f)$ and $q_n \leq 1-\pi(f)$ such that $\lim_{n\to \infty}\|p_n-(r_n+q_n)\|=0$.
    Since $\pi_F(s)=1$ as $\pi_F$ factors through $C$, it follows that
 $\limsup_{n\to \infty}\|q_n(\rho(s)-1)q_n\|=\limsup_{n\to \infty}\|p_n(\pi(s)-1)p_n\|>0$.
 Thus $(q_n)$ must have a subsequence $q_{n_k}$ consisting of nonzero projections.
Since $\|[\rho(s),{q_{n_k}}]\|\to 0$ for $s\in G$,  it follows by Lemma~\ref{lemma:ros} that  $\iota_G$ is weakly contained in $\rho\otimes \bar{\rho}$.
  Since $G$ has property {(T)}, it follows that $\iota_G$ is contained in $\rho\otimes \bar{\rho}$, and hence,  by a classic argument \cite[A.1.12]{Bekka-T}, $\rho$ has a nonzero finite dimensional subrepresentation $\sigma$.
  Since $\rho(f)=0$, this implies that $\sigma(f)=0$  which is a contradiction.
\end{proof}
\section{The dual assembly map and quasidiagonality}\label{4}
In this section we prove Theorem~\ref{thm:2} and prepare the ground for Theorem~\ref{thm:af}.

Let $\EG$ be the classifying space for proper actions of $G$, \cite{BCH}. It is known that $\EG$ admits a locally compact model, \cite{Kasparov-Skandalis-Annals}.
Let us recall  that $G$ has a $\gamma$-element if there exists a $G-C_0(\EG)$-algebra $A$ in the sense of Kasparov \cite{Kas:inv} and two elements $d\in KK_G(A,\C)$ and $\eta\in KK_G(\C,A)$ (called Dirac and dual-Dirac elements, respectively) such that the element $\gamma=\eta\otimes_Ad\in KK_G(\C,\C)$ has the property that $p^*(\gamma)=1\in \mathcal{R}KK_G(\EG;C_0(\EG),C_0(\EG))$ where
$p:\EG \to \text{point}$, \cite{Tu:gamma}. We refer the reader to \cite{Kas:inv} for the definitions and the basic properties of these groups and we will freely employ the notation from there.

Let $B$ be a separable $C^*$-algebra endowed with the trivial $G$-action.
Consider the dual assembly map with coefficients in $B$:
$$\alpha: KK_{G}(\C,B) \to RKK_G^0(\underline{E}G;\C,B)$$
defined by $\alpha(y)=p^*(y)$ where $p:\EG \to \text{point}$.
As in \cite{Kas:inv}, we write $RKK_G^0(\underline{E}G;\C,B)$ for ${\CR}KK_G(\underline{E}G;C_0(\EG),C_0(\EG)\otimes B).$
We need the following result of Kasparov \cite[Th.6.5]{Kas:inv},  see also the proofs of
\cite[Thm. 3.1]{Kasparov-Skandalis-kk} and \cite[Thm. 2.3]{Kasparov-Skandalis-cr} for general coefficients.

\begin{theorem}[Kasparov]\label{thm:Kas1}
  If $G$ is a countable discrete group that admits a $\gamma$-element,
  then the dual assembly map $\alpha:KK_G(\C,B)\to {R}KK_G^0(\underline{E}G;\C,B)$ is split surjective with kernel \mbox{$(1-\gamma)KK_G(\C,B)$}.
\end{theorem}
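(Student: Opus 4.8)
We use Kasparov's Dirac--dual-Dirac method. Write $p\colon\EG\to\{\ast\}$ and, for brevity, set $R(B):=RKK_G^0(\EG;\C,B)={\CR}KK_G(\EG;C_0(\EG),C_0(\EG)\otimes B)$, so that $\alpha=p^*\colon KK_G(\C,B)\to R(B)$ is the dual assembly map. The plan is to construct an explicit homomorphism $\beta\colon R(B)\to KK_G(\C,B)$ and to establish the two identities
\[
\beta\circ\alpha=\gamma\cdot(-)\ \ \text{on}\ KK_G(\C,B),\qquad \alpha\circ\beta=\mathrm{id}_{R(B)}.
\]
Granting these, $\beta$ is a section of $\alpha$, hence $\alpha$ is split surjective; and if $\alpha(y)=0$ then $\gamma y=\beta\alpha(y)=0$, so $y=y-\gamma y=(1-\gamma)y\in(1-\gamma)KK_G(\C,B)$, while conversely $\alpha\bigl((1-\gamma)y\bigr)=p^*(y)-p^*(\gamma)\otimes_{C_0(\EG)}p^*(y)=p^*(y)-p^*(y)=0$ because $p^*(\gamma)=1$ by hypothesis and $p^*$ is compatible with Kasparov products. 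Thus $\Ker\alpha=(1-\gamma)KK_G(\C,B)$, as claimed.

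To construct $\beta$: fibrewise tensoring of cycles over $\EG$ by the $G$-$C_0(\EG)$-algebra $A$ yields a natural map $R(B)\to{\CR}KK_G(\EG;A,A\otimes B)$, and since $A$ (hence $A\otimes B$) is a proper $C_0(\EG)$-algebra, Kasparov's isomorphism theorem for proper algebras \cite{Kas:inv} identifies ${\CR}KK_G(\EG;A,A\otimes B)$ with $KK_G(A,A\otimes B)$; call the resulting homomorphism $\theta\colon R(B)\to KK_G(A,A\otimes B)$. Using the dual-Dirac element $\eta\in KK_G(\C,A)$ on the left and $d\otimes 1_B\in KK_G(A\otimes B,B)$ (the external Kasparov product of the Dirac element $d\in KK_G(A,\C)$ with $1_B$) on the right, define
\[
\beta(x):=\eta\otimes_A\theta(x)\otimes_{A\otimes B}(d\otimes 1_B)\ \in\ KK_G(\C,B).
\]

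The identity $\beta\alpha=\gamma\cdot(-)$ is then a matter of juggling Kasparov products: for $y\in KK_G(\C,B)$ one checks $\theta(p^*(y))=\sigma_A(y):=1_A\otimes_\C y$, whence $\eta\otimes_A\sigma_A(y)=\eta\otimes_\C y$ and $(\eta\otimes_\C y)\otimes_{A\otimes B}(d\otimes 1_B)=(\eta\otimes_A d)\otimes_\C y=\gamma\otimes_\C y=\gamma\cdot y$. The identity $\alpha\beta=\mathrm{id}$ uses the functoriality of $p^*$ with respect to Kasparov products together with the compatibility between $p^*$ and the proper-algebra identification: applying $p^*$ to $\beta(x)$ and transporting the whole expression back into the ${\CR}KK_G(\EG;-,-)$ picture sandwiches the $\EG$-relative class corresponding to $x$ between $p^*(\eta)$ and $p^*(d)\otimes 1_B$, which collapses to $x\otimes_{C_0(\EG)}p^*(\eta\otimes_A d)=x\otimes_{C_0(\EG)}p^*(\gamma)=x$ since $p^*(\gamma)=1$.

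The substantive input, and the main obstacle, is Kasparov's proper-algebra isomorphism ${\CR}KK_G(\EG;A,-)\cong KK_G(A,-)$: this is what makes $\beta$ well defined and encodes the geometry of $\EG$. The remaining difficulty is purely bookkeeping — one must carefully track external versus internal Kasparov products, the $\sigma$-operations, the functoriality of $p^*$, and verify that all of these commute as needed in the two displayed identities. The hypothesis $p^*(\gamma)=1$ is invoked only once, in the verification that $\alpha\beta=\mathrm{id}$ (equivalently, that $(1-\gamma)KK_G(\C,B)\subseteq\Ker\alpha$), but that is precisely the step where the construction pays off.
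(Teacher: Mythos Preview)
Your approach is the same as the paper's: both invoke Kasparov's Dirac--dual-Dirac machinery from \cite[Thm.~6.5]{Kas:inv}, and your construction of $\beta$ and the identity $\beta\alpha=\gamma\cdot(-)$ are correct. The paper in fact does not reproduce the argument; it simply notes that Kasparov's proof for $EG$ carries over verbatim to $\underline{E}G$ once one knows that the two canonical projections $q_1,q_2\colon\underline{E}G\times\underline{E}G\to\underline{E}G$ are $G$-homotopic \cite[Prop.~8]{BCH}.

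That single observation is precisely what your sketch of $\alpha\beta=\mathrm{id}$ is missing. When you write that applying $p^*$ to $\beta(x)$ ``sandwiches the $\EG$-relative class corresponding to $x$ between $p^*(\eta)$ and $p^*(d)\otimes 1_B$, which collapses to $x\otimes_{C_0(\EG)}p^*(\gamma)$'', you are implicitly commuting $x$ past $p^*(\eta)$ inside $\CR KK_G(\EG;-,-)$. But at that stage there are two $C_0(\EG)$-module structures in play --- one from $p^*$ and one inherent to the proper algebra $A$ --- so the relevant object really lives over $\EG\times\EG$, and collapsing it back to $\EG$ via the first projection versus the second need not a priori give the same answer. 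The $G$-homotopy $q_1\simeq q_2$ is exactly what makes this collapse unambiguous and yields the claimed identity. So the step you describe as ``purely bookkeeping'' is the one place where a genuine geometric input is needed; it is not difficult, but it should be stated.
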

\begin{proof}
Since the two canonical  projections $q_1,q_2:\underline{E}G \times \underline{E}G \to \underline{E}G$  are $G$-homotopic  by \cite[Prop.8]{BCH}, one can simply repeat the arguments from
the proof of \cite[Thm. 6.5]{Kas:inv} (with $X=\underline{E}G$ in place of $EG$).
\end{proof}
  By universality of $\underline{E}G$, there is a $G$-equivariant map (unique up to homotopy) $\sigma: EG \to \underline{E}G$.
  It induces a map $\sigma^*:{R}KK_G^0(\underline{E}G;\C,B)\to {R}KK_G^0({E}G;\C,B)$. Recall that $\QQ=\bigotimes_n M_n$  is universal UHF algebra and $K_0(\mathcal{Q})=\Q$ and $K_1(\mathcal{Q})=0$.  We view $\mathcal{Q}$ as a trivial $G$-algebra.
\begin{corollary}\label{cor:KY}
  Let $G$ be a countable discrete group that admits a $\gamma$-element.
  Then the composition $$\gamma KK_G(\C,\mathcal{Q})\hookrightarrow KK_G(\C,\mathcal{Q})\stackrel{\alpha}\longrightarrow {R}KK_G^0(\underline{E}G;\C,\mathcal{Q})\stackrel{\sigma^*}\longrightarrow {R}KK_G^0({E}G;\C,\mathcal{Q})$$
   is  a surjective map. If moreover $G$ is torsion free,  then $\underline{E}G=EG$ and the composition
  \newline $\gamma KK_G(\C,\mathbb{C})\hookrightarrow KK_G(\C,\C)\stackrel{\alpha}\longrightarrow {R}KK_G^0({E}G;\C,\C)$ is an isomorphism.
\end{corollary}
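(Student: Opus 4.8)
The plan is to derive the corollary from Theorem~\ref{thm:Kas1}, exploiting that $\gamma$ is an idempotent in the ring $KK_G(\C,\C)$ (a standard feature of $\gamma$-elements, cf.\ \cite{Kas:inv}, \cite{Tu:gamma}) together with, for part~(i), a rational computation that identifies $\sigma^*$ with a projection. The first observation is purely formal: for any separable $C^*$-algebra $B$ with trivial $G$-action, Kasparov multiplication by $\gamma$ is an idempotent endomorphism of $KK_G(\C,B)$, so $KK_G(\C,B)=\gamma KK_G(\C,B)\oplus(1-\gamma)KK_G(\C,B)$, and by Theorem~\ref{thm:Kas1} the second summand is exactly $\ker\alpha$. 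Consequently $\alpha$ restricts to an isomorphism $\gamma KK_G(\C,B)\xrightarrow{\,\sim\,}RKK_G^0(\EG;\C,B)$: it is onto, since $\alpha(y)=\alpha(\gamma y)$ for every $y$ (as $(1-\gamma)y\in\ker\alpha$) while $\alpha$ itself is onto by Theorem~\ref{thm:Kas1}, and it is injective, since any element of $\gamma KK_G(\C,B)$ lying in $(1-\gamma)KK_G(\C,B)=\ker\alpha$ is killed by $\gamma\cdot$ and hence vanishes.

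This already yields part~(ii). If $G$ is torsion free, a $G$-CW complex is proper precisely when it is free, so $EG$ is a model for $\EG$; the canonical map $\sigma$ may then be taken to be the identity and $\sigma^*=\mathrm{id}$. Applying the first paragraph with $B=\C$ shows that the composite $\gamma KK_G(\C,\C)\hookrightarrow KK_G(\C,\C)\xrightarrow{\alpha}RKK_G^0(EG;\C,\C)$ is an isomorphism, as claimed.

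For part~(i), the first paragraph with $B=\QQ$ reduces everything to surjectivity of $\sigma^*\colon RKK_G^0(\EG;\C,\QQ)\to RKK_G^0(EG;\C,\QQ)$, and this is exactly where the choice of $\QQ$ is used. Since $K_0(\QQ)=\Q$ and $K_1(\QQ)=0$, the K\"unneth theorem makes all the $KK_G$-groups under consideration $\Q$-vector spaces, and on each $G$-compact $Z\subseteq\EG$ the group $KK_G(C_0(Z),\QQ)$ is computed, through the equivariant Chern character of Baum--Connes/L\"uck, as a finite direct sum of (rationalized, locally finite) cohomology groups of fixed-point spaces indexed by conjugacy classes of finite-order elements of $G$, the identity class contributing exactly the corresponding piece of $RK^0(BG;\QQ)=RKK_G^0(EG;\C,\QQ)$. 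Under these identifications $\sigma$ induces the projection onto the identity summand --- for $G$ finite this is simply the rationalized dimension homomorphism $R(G)\otimes\Q\to\Q$ --- and assembling these projections over a $G$-compact exhaustion of $\EG$ gives the desired surjectivity. The step I expect to carry the real content is precisely this last one: pinning down $\sigma^*$ as the identity-component projection of the delocalized Chern character, doing so compatibly with the $G$-compact exhaustion, and checking that passing to the limit over the exhaustion preserves surjectivity (either as a colimit of surjections or, in the cohomological direction, via a Mittag--Leffler argument). Everything preceding that is formal bookkeeping with the idempotent $\gamma$ and Theorem~\ref{thm:Kas1}.
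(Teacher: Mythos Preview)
Your reduction is the same as the paper's: use the idempotence of $\gamma$ together with Theorem~\ref{thm:Kas1} to see that $\alpha$ restricts to an isomorphism $\gamma\,KK_G(\C,B)\cong RKK_G^0(\underline{E}G;\C,B)$, dispose of the torsion-free case via $EG=\underline{E}G$, and reduce part~(i) to surjectivity of $\sigma^*$ with $B=\QQ$. The difference is only in how you prove this last surjectivity.

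You invoke the delocalized equivariant Chern character to decompose $RKK_G^0(\underline{E}G;\C,\QQ)$ over conjugacy classes of finite-order elements and identify $\sigma^*$ with the projection onto the identity component. This is plausible and, done carefully, would work, but it needs a contravariant version of the Chern character compatible with the $G$-compact exhaustion and an inverse-limit argument---exactly the steps you flag as carrying the real content and leave unresolved.

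The paper bypasses all of this by dualizing a known K-homology fact. By \cite[p.~275--6]{BCH} the map $\sigma_*\colon RK_0^G(EG)\to RK_0^G(\underline{E}G)$ is rationally injective, so $(\sigma_*)^*\colon \Hom(RK_0^G(\underline{E}G),\Q)\to\Hom(RK_0^G(EG),\Q)$ is surjective. A universal-coefficient theorem for the coefficient algebra $\QQ$ (\cite[Lemma~2.3]{Kasparov-Skandalis-cr}, \cite[Lemma~3.4]{Kasparov-Skandalis-kk}) identifies $RKK_G^0(X;\C,\QQ)$ with $\Hom(RK_0^G(X),\Q)$ for both $X=EG$ and $X=\underline{E}G$, and under these identifications $\sigma^*$ becomes $(\sigma_*)^*$. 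This is shorter: it avoids the Chern character entirely and offloads the limit issues to the cited UCT lemmas. Your route would give a more explicit geometric picture of $\sigma^*$, but at the cost of substantially more machinery to justify.
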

\begin{proof}
 It was shown in \cite[p.275-6]{BCH} that $\sigma$ induces a rationally injective homomorphism
 \[\sigma_*: RK^G_0(EG) \to RK^G_0(\underline{E}G).\]
   It follows that the map
  \((\sigma_*)^*:\Hom(RK^G_0(\underline{E}G),\Q)\to \Hom( RK^G_0(EG),\Q)\) is surjective.
  By the universal coefficient theorems  stated as Lemma 2.3 of \cite{Kasparov-Skandalis-cr}  and Lemma 3.4 of \cite{Kasparov-Skandalis-kk} applied for the coefficient algebra $\QQ$, the horizontal maps in the commutative diagram
   \[\xymatrix{
{R}KK_G^0(\underline{E}G;\C,\mathcal{Q})\ar[d]_{\sigma^*}\ar[r]& \Hom(RK^G_0(\underline{E}G),\Q)\ar[d]^{(\sigma_*)^*}
\\
{R}KK_G^0({E}G;\C,\mathcal{Q})\ar[r]  & \Hom( RK^G_0(EG),\Q)
}
\]
  are bijections.
  It follows that the restriction map
 $\sigma^*:{R}KK_G^0(\underline{E}G;\C,\mathcal{Q})\to {R}KK_G^0({E}G;\C,\mathcal{Q})$ is surjective. The first part of the statement follows now from Theorem~\ref{thm:Kas1}. The second part follows directly from the same theorem since if $G$ is torsion free then $EG = \underline{E}G$.
\end{proof}
Let $j_G$ and $j_{G,r}$ be the descent maps of Kasparov \cite[Thm.3.11]{Kas:inv}.  Thus $\gamma\in KK_G(\C,\C)$ gives
an element $j_G(\gamma)\in KK(C^*(G),C^*(G))$ which induces a map $$j_G(\gamma)^*=j_G(\gamma) \otimes_{C^*(G)}-:KK(C^*(G),B) \to KK(C^*(G),B).$$
 The image of $j_G(\gamma)^*$ is $j_G(\gamma) \otimes_{C^*(G)} KK(C^*(G),B),$ and as it is customary, we will denote it by $\gamma KK(C^*(G),B)$,
($\gamma_r KK(C_r^*(G),B)$ is defined similarly as the image of $ j_{G,r}(\gamma)^*$).
 Since $G$ is discrete and acts trivially on $B$,
there is a canonical  isomorphism, \cite{Kas:inv},  $$\kappa: KK_G(\C,B)\stackrel{\cong}\longrightarrow KK(C^*(G),B)$$  which is compatible with  the module structure over the group ring of $G$. Thus, by \cite[Lemma 11]{Proietti},
 for every $x\in KK_G(\C,\C)$, the following diagram is commutative.
 \begin{equation}\label{eq:obs}
\xymatrix{
KK_G(\C,B)\ar[r]^-{\kappa}\ar[d]^{x\otimes -}&  KK(C^*(G),B)\ar[d]^{j_G(x)\otimes -}
\\
KK_G(\C,B)\ar[r]_-{\kappa}  & KK(C^*(G),B)
}
\end{equation}
Kasparov \cite[3.12]{Kas:inv} has shown that
  the canonical surjection  $q_G :C^*(G)\to C_r^*(G)$  induces an isomorphism of $\gamma$-parts
   $q_G^*: \gamma_r KK(C_r^*(G),B) \stackrel{\cong}\longrightarrow  \gamma  KK(C^*(G),B)$. In particular:
\begin{proposition}[Kasparov]\label{prop:Kubota}
 If $G$ is a discrete countable group  that admits a $\gamma$-element, then
 $\gamma  KK(C^*(G),B)\subset q_G^* (KK(C_r^*(G),B))$.
\end{proposition}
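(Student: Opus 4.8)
The plan is to read this statement off directly from the isomorphism of Kasparov quoted immediately above. That isomorphism asserts that $q_G^*=[q_G]\otimes_{C_r^*(G)}-\colon KK(C_r^*(G),B)\to KK(C^*(G),B)$, restricted to the subgroup $\gamma_r KK(C_r^*(G),B)$, has image exactly $\gamma KK(C^*(G),B)$. Hence $\gamma KK(C^*(G),B)=q_G^*\bigl(\gamma_r KK(C_r^*(G),B)\bigr)\subseteq q_G^*\bigl(KK(C_r^*(G),B)\bigr)$, which is precisely the assertion. I would note in passing that only surjectivity of $q_G^*$ onto the $\gamma$-part is needed here, not injectivity.

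If one prefers not to quote \cite[3.12]{Kas:inv} verbatim, I would derive the surjectivity half from the Dirac--dual-Dirac factorization of $\gamma$ directly. First write $\gamma=\eta\otimes_A d$ with $A$ a proper $G$-$C_0(\EG)$-algebra, as in Theorem~\ref{thm:Kas1}; since the $G$-action on $A$ is proper, the full and reduced crossed products agree, $A\rtimes G=A\rtimes_r G$. Naturality of Kasparov's descent with respect to the canonical surjection then gives $[q_G]\otimes_{C_r^*(G)}j_{G,r}(\eta)=j_G(\eta)$ (under this identification), so that $j_G(\gamma)=j_G(\eta)\otimes_{A\rtimes G}j_G(d)=[q_G]\otimes_{C_r^*(G)}\beta$, where $\beta:=j_{G,r}(\eta)\otimes_{A\rtimes G}j_G(d)\in KK(C_r^*(G),C^*(G))$. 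Consequently any $y=j_G(\gamma)\otimes_{C^*(G)}z\in\gamma KK(C^*(G),B)$ equals $q_G^*\bigl(\beta\otimes_{C^*(G)}z\bigr)$ with $\beta\otimes_{C^*(G)}z\in KK(C_r^*(G),B)$, which is the inclusion. For completeness, the reverse containment $q_G^*\bigl(\gamma_r KK(C_r^*(G),B)\bigr)\subseteq\gamma KK(C^*(G),B)$ and the injectivity that together complete Kasparov's isomorphism would follow from the intertwining relation $q_G^*\circ j_{G,r}(\gamma)^*=j_G(\gamma)^*\circ q_G^*$ and the idempotency $\gamma\otimes_{\C}\gamma=\gamma$, but neither is required for the present statement.

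I do not expect a genuine obstacle: the argument is formal once the descent homomorphisms $j_G$, $j_{G,r}$ and their compatibility with $q_G$ are in hand. The one point that needs care is bookkeeping --- keeping $\gamma_r KK(C_r^*(G),B)$, defined as the image of $j_{G,r}(\gamma)^*$, distinct from a naive ``$\gamma$-part'' built on the reduced side, and making sure that $j_G(\gamma)$ really does factor through $C_r^*(G)$ in $KK$, which is exactly where the properness of $A$, hence the equality of its full and reduced crossed products, enters.
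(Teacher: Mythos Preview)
Your proposal is correct, and your second paragraph is essentially the paper's proof: the paper draws the commutative diagram in $KK$ induced by descent applied to $\eta$ and $d$, invokes the isomorphism $A\rtimes G\cong A\rtimes_r G$ from properness, and reads off $\mathrm{Im}(j_G(\gamma)^*)\subseteq \mathrm{Im}(j_G(\eta)^*)\subseteq \mathrm{Im}(q_G^*)$. Your formulation via the explicit element $\beta=j_{G,r}(\eta)\otimes_{A\rtimes G}j_G(d)$ is the same argument stated pointwise rather than at the level of images; the paper also notes, as you do implicitly, that this shows $j_G(\gamma)$ itself lies in the image of $q_G^*$, which is the input needed to invoke \cite[3.12]{Kas:inv} --- so your first paragraph's direct citation is not circular, but the paper prefers to spell out the argument since the factorization through a quasidiagonal intermediate algebra is reused later.
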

\begin{proof} We review the argument as it plays an important role in the paper.
Let $A$, $d$ and $\eta$ be as in the definition of the $\gamma$-element.
The following diagram is commutative  in the category with objects separable $C^*$-algebras and morphisms KK-elements \cite{Kas:inv}.
\[
\xymatrix{
C^*(G)\ar[r]^{j_G(\eta)}\ar[d]^{q_G}& A\rtimes G\ar[d]^{q_A}\ar[r]^{j_G(d)}& C^*(G)\ar[d]^{q_G}
\\
C_r^*(G)\ar[r]_{j_{G,r}(\eta)}  &A\rtimes_r G \ar[r]_{j_{G,r}(d)}  & C_r^*(G)
}
\]
Since the action of $G$ is proper, the natural map $q_A:A\rtimes G\to A\rtimes_r G$ is an isomorphism by \cite[3.4.16]{Echterhoff:book}. We obtain then a commutative diagram of abelian groups
\begin{equation*}
\xymatrix{
KK(C^*(G),B)& KK(A\rtimes G,B)\ar[l]_-{j_G(\eta)^*}\ar@{=}[d]& KK(C^*(G),B)\ar[l]_{j_G(d)^*}
\\
KK(C_r^*(G),B)\ar[u]^{q_G^*}  &KK(A\rtimes_r G,B)\ar[l]^-{j_{G,r}(\eta)^*}  &
}
\end{equation*}
which shows that $\mathrm{Im}(j_G(\gamma)^*)=\mathrm{Im}(j_G(\eta)^*\circ j_G(d)^* ) \subset \mathrm{Im}(j_G(\eta)^*)\subset \mathrm{Im}(q_G^*)$.  Note that for $B=\C$ this shows that $\gamma\in K^0(C^*(G))$ is in the image of $q_G^*$ and this means that it can be represented by a class whose underlying $G$-representation is weakly contained in the left regular representation. This property of $\gamma$ is part of the hypotheses of \cite[3.12]{Kas:inv}.
\end{proof}

Let $A$, $B$ be separable $C^*$-algebras.
Any class $x\in KK(A,B)$ is  represented by some Cuntz pair, i.e. a pair
   of $*$-homomorphisms $\varphi,\psi:A \to M(K(H)\otimes B)$,  such that $\varphi(a)-\psi(a)\in K(H)\otimes B$, for all $a\in A$.
The following notion was introduced in \cite{AA} for K-homology classes. Assume that $B$ is unital.
\begin{definition}\label{def:kkqd}
An element $x\in KK(A,B)$ is quasidiagonal if it is represented by a Cuntz pair  $\varphi,\psi:A \to M(K(H)\otimes B),$ $\varphi(a)-\psi(a) \in K(H)\otimes B$, for all $a\in A,$  with the property that
there exists an approximate unit of projections $(p_n)_n$ of  $K(H)$ such that $\lim_{n\to \infty}\|[\psi(a),p_n\otimes 1_B]\|= 0$,  for all $a\in A$.
The quasidiagonal elements form a subgroup of $KK(A,B)$,  denoted by $KK(A,B)_{qd}$. The $C^*$-algebra $A$ is called K-quasidiagonal if $K^0(A)=K^0(A)_{qd}$.
  \end{definition}
  \begin{remark}\label{remark:qdd}
  (a) If $\theta:A \to D$ is a $*$-homomorphism, then $\theta^*[\varphi,\psi]= [\varphi\circ \theta,\psi\circ \theta]$ and hence $$\theta^*(KK(D,B)_{qd})\subset KK(A,B)_{qd}.$$

    (b) Let $D, B$ be  separable $C^*$-algebras with $B$ nuclear and unital. Fix a faithful representation $\psi_0:D \to M(K(H))$ such that $\psi_0(D)\cap K(H)=\{0\}$ and $(\psi_0(D)H)^\perp$ is infinite dimensional. Then any element $x\in KK(D,B)$ is represented by a Cuntz pair $(\varphi,\psi)$ where $\psi=\psi_0 \otimes 1_B: A \to M(K(H)\otimes B)$, \cite{Skandalis:K-nuclear}. Therefore, if  $D$ is quasidiagonal,
     then $ \psi_0(D)$ is a quasidiagonal subset of $M(K(H))$ and hence
      $KK(D,B)=KK(D,B)_{qd}$.

  \end{remark}
By \cite[Thm.3.4]{Kas:inv}, there is natural descent isomorphism
  $$ \lambda^G:{R}KK_G^0({E}G;\C,B)\stackrel{\cong}\longrightarrow  {R}KK^0(BG;\C,B).$$
In particular,
$ {R}KK_G^0({E}G;\C,\mathcal{Q})\cong  {R}KK^0(BG;\C,\mathcal{Q})=: RK^0(BG;\Q)$, \cite{Kasparov:conspectus}. Let $\nu$ be the map
\begin{equation*}
 \nu=\lambda^G\circ\sigma^*\circ \alpha \circ \kappa^{-1} : KK^0(C^*(G),B)\cong KK_G(\C,B) \to {R}KK_G^0({E}G;\C,B)\cong  {R}KK^0(BG;\C,B).
\end{equation*} Note that $\sigma^*\circ \alpha =p^*$ where $p:EG \to point$.

 Kubota's idea \cite{Kubota2} of using a quasidiagonal $C^*$-algebra intermediate between $C_r^*(G)$ and $C^*(G)$
has strengthened significantly the approach to  almost flat K-theory based on {$K$}-{quasidiagonality} of $C^*(G)$,  introduced in \cite{AA}.  By Proposition~\ref{cor:qd-embed}, the quasidiagonal groups are exactly those which admit quasidiagonal intermediate $C^*$-algebras. This enables us to extend a result from \cite{Kubota2} to the class of quasidiagonal groups
(we consider only the non-relative, single group case).  Moreover, the direct use of Theorem~\ref{thm:Kas1} allows for integral coefficients in the case of torsion free groups.
\begin{theorem}\label{cor:qd-embed-kk}
  Let $G$ be a countable discrete quasidiagonal group and let $B$ be a separable nuclear unital $C^*$-algebra.
  If $G$ admits a $\gamma$-element, then 
$ \gamma KK(C^*(G),B) )\subset KK(C^*(G),B)_{qd}$. It follows that
  \(\nu(KK(C^*(G),\mathcal{Q})_{qd})= RK^0(BG;\Q) \) and if we also assume that $G$ is torsion free, then \(\nu(K^0(C^*(G))_{qd})= RK^0(BG)\).
\end{theorem}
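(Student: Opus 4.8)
The plan is to first prove the inclusion $\gamma KK(C^*(G),B)\subseteq KK(C^*(G),B)_{qd}$ and then feed it into Corollary~\ref{cor:KY} to compute the image of $\nu$ on quasidiagonal classes. For the inclusion I would argue as follows. Since $G$ is quasidiagonal, Proposition~\ref{cor:qd-embed} provides a factorization $q_G=\rho\circ\pi$ of the canonical surjection $q_G\colon C^*(G)\to C_r^*(G)$ through a unital quasidiagonal $C^*$-algebra $D$, with $*$-homomorphisms $\pi\colon C^*(G)\to D$ and $\rho\colon D\to C_r^*(G)$. By Proposition~\ref{prop:Kubota}, every $x\in\gamma KK(C^*(G),B)$ lies in $q_G^*\big(KK(C_r^*(G),B)\big)$, so $x=q_G^*(y)=\pi^*\rho^*(y)$ for some $y\in KK(C_r^*(G),B)$. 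Now $\rho^*(y)\in KK(D,B)$, and since $D$ is quasidiagonal while $B$ is separable, nuclear and unital, Remark~\ref{remark:qdd}(b) gives $KK(D,B)=KK(D,B)_{qd}$, so in particular $\rho^*(y)\in KK(D,B)_{qd}$. Applying the functoriality of Remark~\ref{remark:qdd}(a) to $\pi$ yields $x=\pi^*\rho^*(y)\in KK(C^*(G),B)_{qd}$, which proves the first assertion.

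Next I would use the commuting square \eqref{eq:obs} with $x=\gamma$ together with the (module-compatible) isomorphism $\kappa$ to identify $\kappa^{-1}\big(\gamma KK(C^*(G),\mathcal{Q})\big)$ with $\gamma KK_G(\C,\mathcal{Q})$. Since $\nu=\lambda^G\circ\sigma^*\circ\alpha\circ\kappa^{-1}$, the surjectivity statement of Corollary~\ref{cor:KY} for the coefficient algebra $\mathcal{Q}$ (that $\sigma^*\circ\alpha$ maps $\gamma KK_G(\C,\mathcal{Q})$ onto $RKK_G^0(EG;\C,\mathcal{Q})$) together with the descent isomorphism $\lambda^G$ gives $\nu\big(\gamma KK(C^*(G),\mathcal{Q})\big)=RK^0(BG;\Q)$. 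Combining this with the inclusion just proved and with the trivial containment $\nu\big(KK(C^*(G),\mathcal{Q})_{qd}\big)\subseteq RK^0(BG;\Q)$, one obtains $RK^0(BG;\Q)\subseteq\nu\big(KK(C^*(G),\mathcal{Q})_{qd}\big)\subseteq RK^0(BG;\Q)$, hence equality. For the torsion free case one has $\underline{E}G=EG$, so $\sigma^*$ is the identity, and the second part of Corollary~\ref{cor:KY} says the relevant composition with $\C$-coefficients is an isomorphism onto $RKK_G^0(EG;\C,\C)$; repeating the same three-term squeeze with $\mathcal{Q}$ replaced by $\C$ (which is again separable, nuclear and unital, so Remark~\ref{remark:qdd}(b) still applies, and $\C$ is unital as required by Definition~\ref{def:kkqd}) yields $\nu\big(K^0(C^*(G))_{qd}\big)=RK^0(BG)$.

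I expect the proof to be an assembly of already-available ingredients rather than to require a new idea: the serious inputs — Kasparov's comparison of $\gamma$-parts (Proposition~\ref{prop:Kubota}), the equality $KK(D,B)=KK(D,B)_{qd}$ for quasidiagonal $D$ obtained via Voiculescu absorption (Remark~\ref{remark:qdd}(b)), and the surjectivity of the dual assembly map on $\gamma$-parts (Corollary~\ref{cor:KY}) — are all in place. The points that genuinely need care are bookkeeping ones: making sure the coefficient algebra is unital and nuclear every time Remark~\ref{remark:qdd}(b) is invoked, and checking that ``factors through a unital quasidiagonal $C^*$-algebra'' in Proposition~\ref{cor:qd-embed} is literally the factorization $q_G=\rho\circ\pi$ used above. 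If one prefers not to quote Remark~\ref{remark:qdd}(b) as a black box, the place where quasidiagonality of $D$ is actually used is in representing $\rho^*(y)$ by a Cuntz pair whose second component is a quasidiagonal faithful representation of $D$, amplified so as to meet the compacts trivially and have infinite corank, after which the approximate unit of projections almost commuting with that representation is supplied directly by quasidiagonality of $D$.
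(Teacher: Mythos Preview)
Your proposal is correct and follows essentially the same route as the paper: factor $q_G$ through a unital quasidiagonal $D$ via Proposition~\ref{cor:qd-embed}, combine Remark~\ref{remark:qdd}(a)(b) with Proposition~\ref{prop:Kubota} to get $\gamma KK(C^*(G),B)\subset q_G^*KK(C_r^*(G),B)\subset KK(C^*(G),B)_{qd}$, and then feed this into Corollary~\ref{cor:KY}. The only cosmetic difference is that for the second part the paper observes (via Theorem~\ref{thm:Kas1} and the square~\eqref{eq:obs}) that $\nu$ vanishes on $(1-\gamma)KK(C^*(G),B)$, giving $\nu(KK(C^*(G),B))=\nu(\gamma KK(C^*(G),B))=\nu(KK(C^*(G),B)_{qd})$ directly, whereas you run the equivalent sandwich argument; both are the same computation.
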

\begin{proof} The factorization
$C^*(G) \stackrel{q_D}\longrightarrow D  \to C_r^*(G)$ of $q_G$
with $D$ unital and quasidiagonal given by Proposition~\ref{cor:qd-embed} in conjunction with Remark~\ref{remark:qdd} implies that
\[q_G^* (KK(C_r^*(G),B))\subset q_D^*(KK(D,B))= q_D^*(KK(D,B)_{qd})\subset KK(C^*(G),B)_{qd}.\]
From this and Proposition~\ref{prop:Kubota} we obtain that $ \gamma KK(C^*(G),B) )\subset KK(C^*(G),B)_{qd}$.

By  Theorem~\ref{thm:Kas1},  $\alpha$ vanishes on $(1-\gamma) KK_G(\C,B))$. Since the diagram \eqref{eq:obs} is commutative, this group is mapped to  $(1-\gamma) KK^0(C^*(G),B)$ by $\kappa$   and hence
$$\nu (KK^0(C^*(G),B))= \nu(\gamma KK^0(C^*(G),B))= \nu(KK(C^*(G),B)_{qd}). $$
   By  Corollary~\ref{cor:KY}, the map $\nu$ is surjective if either $B=\QQ$ or if $G$ is torsion free and $B=\C$.
\end{proof}
\begin{remark}\label{rem:push}
 Suppose that $\{\pi_n:A \to D_n\}_n$ is a ucp asymptotic homomorphism
of unital C*-algebras. Thus $\lim_{n \to \infty} \|\pi_n(aa')-\pi_n(a)\pi_n(a')\|=0$ for all $a,a'\in A$. The sequence $\{\pi_n\}_{n}$ induces a unital $*$-homomorphism
$A \to \prod_n D_n/\bigoplus_n D_n$ and hence a group homomorphism
$K_0(A)\to \prod_n K_0(D_n)/\bigoplus_n K_0(D_n)$.
This gives a canonical way to push forward  an element $x\in K_0(A)$ to a sequence $(\pi_{\nsh }(x))_n$ with components in $K_0(D_n)$ which is well-defined up to tail equivalence: two sequences are tail equivalent,
$(y_n)\equiv (z_n)$, if there is $m$ such that $x_n=y_n$ for all $n\geq m$. Note that $\pi_{\nsh }(x+x')\equiv \pi_{\nsh }(x)+\pi_{\nsh }(x')$.
Of course, if $\pi_n$ are genuine $*$-homomorphisms then $\pi_{\nsh }(x)=\pi_{n\, * }(x)$.
It is convenient to extend the notation above for the corresponding maps on $K_1$ and K-theory with coefficients $K_*(A;\Z/p)\to \prod_n K_*(D_n;\Z/p)/\bigoplus_n K_*(D_n;\Z/p)$.
\end{remark}

 If $BG$ is written as the union of an increasing sequence
$(Y_i)_{i}$ of finite CW complexes, then as explained in the proof of Lemma 3.4 from \cite{Kasparov-Skandalis-kk}, there is a Milnor $\varprojlim^1$ exact sequence which gives
\begin{equation}\label{eq:milnor}
RK^0(BG;\Q)\cong \varprojlim RK^0(Y_i;\Q).
\end{equation}
We denote by $\nu_i$ the composition of the map $\nu$ defined above with the restriction map $RK^0(BG;\Q)\to RK^0(Y_i;\Q)$. If $Y=Y_i$ is fixed we will write $\nu_Y$ for $\nu_i$.
\vskip 4pt
\emph{Proof of Theorem~\ref{thm:2}:}
\vskip 4pt
 If $G$ is weakly quasidiagonal and weakly matricially stable, then by Lemma~\ref{lemma:qdmap}, $G$ is MAP and in particular quasidiagonal. It follows  by Theorem~\ref{cor:qd-embed-kk} that
 \(\nu(KK(C^*(G),\mathcal{Q})_{qd})= RK^0(BG;\Q) \).

 Consider the flat line-bundle $\ell$ with fiber $C^*(G)$ defined by
$\widetilde{EG}\times_G C^*(G)\to BG$, where $G\subset C^*(G)$ acts diagonally.
  Let $Y$ be a  connected finite CW complex $Y\subset BG$. The restriction of $\ell$ to $Y$, denoted $\ell_Y,$ yields a self-adjoint projection $P=P_Y$
in $M_m(\CCC)\otimes C(Y)\otimes C^*(G)$ for some $m\geq 1$ and hence an element $[P]\in KK(\C,C(Y)\otimes C^*(G))$. We shall elaborate on this point shortly.

It was shown by Kasparov \cite[Lemma~6.2]{Kas:inv}, \cite{Kasparov:conspectus},
that the map \[\nu_Y:KK(C^*(G),\QQ) \stackrel{\nu}\longrightarrow K^0(BG;\Q) \to K^0(Y;\Q) \cong K_0(C(Y)\otimes \QQ)\] is given by $\nu_Y(x)=[P]\otimes_{C^*(G)} x$.
Let  $(U_i)_{i\in I}$ be finite covering of $Y$ by open sets
such that $\ell$ is trivial on each $U_i$ and $U_i\cap U_j$ is connected. Using  trivializations
of $\ell$  to $U_i$ one obtains group elements $s_{ij}\in G$
 which define  a 1-cocycle that is constant on each nonempty set $U_i\cap U_j$  and which represents $\ell_Y$.  Thus $s_{ij}^{-1}=s_{ji}$ and $s_{ij} \cdot s_{jk}=s_{ik}$ whenever $U_i\cap U_j \cap U_k \neq \emptyset.$
 Let $(\chi_i)_{i\in I}$
be positive continuous functions with $\chi_i$ supported in $U_i$ and such that
$\sum_{i\in I} \chi^2_i=1$. Set $m=|I|$ and let $(e_{ij})$ be the canonical matrix unit of $M_m(\CCC)$. Then $\ell_Y$ is represented by the selfadjoint projection
\[P=\sum_{i,j\in I} e_{ij}\otimes \chi_i\chi_j\otimes s_{ij}\in M_{m}(\CCC)\otimes C(Y)\otimes C^*(G).\]
We now take advantage of the following realization of $\nu_Y$ on quasidiagonal KK-classes introduced in  \cite{AA}.
Let $x\in KK(C^*(G),\QQ)_{qd}$. Then  $x$ is represented  by  a pair of nonzero $*$-representations $\varphi,\psi:C^*(G)\to M(K(H)\otimes \QQ)$,  such that $\varphi(a)-\psi(a)\in K(H)\otimes \QQ$,
 $a\in C^*(G)$, and with the property that
 there is an increasing approximate unit $(p_n)_n$ of $K(H)$ consisting of projections such that $(p_n\otimes 1_\QQ)_n$ commutes asymptotically with both $\varphi(a)$ and $\psi(a)$, for all $a\in C^*(G)$.
  It is then clear that the compressions $\varphi^{(0)}_n=(p_n\otimes 1_\QQ) \varphi(\cdot)(p_n\otimes 1_\QQ)$ and $\varphi^{(1)}_n=(p_n\otimes 1_\QQ) \psi(\cdot)(p_n\otimes 1_\QQ)$
 are cp asymptotic homomorphisms $\varphi^{(r)}_n:C^*(G)\to K(H)\otimes \QQ$. Let $1$ denote the unit of $C^*(G)$.
It is routine to  further perturb these maps to cp asymptotic homomorphisms such that
$\varphi^{(r)}_n(1)$, $r=0,1$, are projections. If $[\varphi^{(r)}_n(1)]= k^{(r)}_n/q^{(r)}_n \in \Q=K_0(\QQ)$, then after conjugating by  unitaries in $\QQ$, we can arrange that $\varphi^{(r)}_n:C^*(G)\to M_{k^{(r)}_n}(\C e_{0})\subset M_{k^{(r)}_n}(\QQ^{(r)}_n)$, where
 $\QQ^{(r)}_n=(1 \otimes \cdots \otimes 1\otimes  M_{q^{(r)}_n}\otimes 1\cdots )  \subset \bigotimes_{q=1}^\infty M_q=\Q,$
and $e_{0}$ is a rank one projection in a copy of $M_{q^{(r)}_n}\cong\QQ^{(r)}_n$.
As argued in  \cite[Prop.2.5]{AA}: 
 \begin{equation}\label{eqn:important}
  \nu_Y(x)=[P]\otimes_{C^*(G)} x \equiv (\id{m}\otimes\id{C(Y)}\otimes\varphi^{(0)}_n)_\sharp(P)-( \id{m}\otimes\id{C(Y)}\otimes\varphi^{(1)}_n)_\sharp(P).
 \end{equation}
 Since $G$ is assumed to be weakly matricially stable, there exist sequences of genuine group representations $\sigma^{(r)}_n$ and $\pi^{(r)}_n$, $r=0,1$, such that
\begin{equation}\label{eqn:importanti}\lim_n\|\varphi^{(r)}_n(s)\oplus \sigma^{(r)}_n(s)-\pi^{(r)}_n(s)\|=0 \end{equation} for all $s\in G$.
  It is clear that we can view $\sigma^{(r)}_n$ and $\pi^{(r)}_n$ as maps into matrices over $\QQ^{(r)}_n$.
  Let $x_n$ denote the class of $[\sigma^{(0)}_n,\sigma^{(1)}_n]\in KK(C^*(G),\QQ).$
  Then
 \begin{equation}\label{eqn:importantii}\nu_Y(x_n)= [P]\otimes_{C^*(G)} (x_n)\equiv (\id{m}\otimes\id{C(Y)}\otimes\sigma^{(0)}_n)_*[P]-( \id{m}\otimes\id{C(Y)}\otimes\sigma^{(1)}_n)_*[P].\end{equation}
It follows from \eqref{eqn:important}, \eqref{eqn:importanti} and \eqref{eqn:importantii} that
\begin{equation}\label{eqn:iimportant}
 \nu_Y(x+x_n)= [P]\otimes_{C^*(G)} (x +x_n)\equiv (\id{m}\otimes\id{C(Y)}\otimes\pi^{(0)}_n)_*[P]-( \id{m}\otimes\id{C(Y)}\otimes\pi^{(1)}_n)_*[P].
 \end{equation}
 On the other hand we observe that the projections $e^{(r)}_n$, $f^{(r)}_n$, $r=0,1$, defined by
 \[e^{(r)}_n=(\id{m}\otimes\id{C(Y)}\otimes\pi^{(r)}_n)(P)=\sum_{i,j\in I} e_{ij}\otimes \chi_i\chi_j\otimes \pi^{(r)}_n(s_{ij})\in M_{m}(\CCC)\otimes C(Y)\otimes M_{K(n)}(\QQ^{(r)}_n),\]
  \[f^{(r)}_n=(\id{m}\otimes\id{C(Y)}\otimes\sigma^{(r)}_n)(P)=\sum_{i,j\in I} e_{ij}\otimes \chi_i\chi_j\otimes \sigma^{(r)}_n(s_{ij})\in M_{m}(\CCC)\otimes C(Y)\otimes M_{K(n)}(\QQ^{(r)}_n),\]
 correspond to flat finite rank complex bundles since they are realized via the constant cocycles $\pi^{(r)}_n(s_{ij})$ and $\sigma^{(r)}_n(s_{ij})$.
 By an extension of a result of Milnor given in \cite{KT:flat-bundles}, all the rational Chern classes of flat complex bundles vanish. It follows that the bundles corresponding to $m_n\cdot e^{(r)}_n$ and $m_n\cdot f^{(r)}_n$ will be trivial for suitable integers $m_n$ and hence $$\nu_Y(x)=\nu_Y(x+x_n)-\nu_Y(x_n)=[e^{(0)}_n]-[e^{(1)}_n]-[f^{(0)}_n]+[f^{(1)}_n]\in {K}^0(point;\Q).$$ Since $x$ was arbitrary, it follows that
 $\nu(KK(C^*(G),\QQ)_{qd})\subset \Q =RK^0(point;\Q)\subset RK^0(BG;\Q)$.
On the other hand we know that $\nu$ is surjective by Theorem~\ref{cor:qd-embed-kk}. Thus $RK^0(BG;\Q)=\Q$ and hence
  $H^{even}(BG;\Q)=\Q,$ since the Chern character
\( Ch: RK^0(BG;\Q)\to H^{even}(BG;\Q)\)
is an isomorphism.
 \qed

\begin{remark}\label{rem:flat}
(i) Let $G$ be as in Theorem~\ref{thm:2} and assume in addition that $BG$ admits a finite simplicial complex model.
If  $G$ is weakly matricially stable it follows that $K^0(BG)$ is generated by flat bundles.
Indeed, if $BG$ is a finite simplicial complex, then $G$ is torsion free and so $\nu:K^0(C^*(G))_{qd}\to K^0(BG)$ is surjective by Theorem~\ref{cor:qd-embed-kk}.
Given $y\in  K^0(BG)$ we lift $y$ to quasidiagonal class $x=[\varphi,\psi]$ and reasoning as above we see that $y=[e^{(0)}_n]-[e^{(1)}_n]-[f^{(0)}_n]+[f^{(1)}_n]$
for sufficiently large $n$.

(ii) If $G$ is a group with Haagerup's property and $BG$ admits a finite simplicial complex model, then we claim that $K^0(C^*(G))=R(G)_{fin}$ if and only if
$K^0(BG)$ is generated by flat bundles.
This is explained by the fact that $\nu$ is a generalization of the Atiyah-Segal map and that it is a bijection under the present assumptions.
 If $\rho: G \to U(k)$ is a unitary representation,
then $\nu[\rho]=[E_\rho]$ where $E_\rho=EG\times_G \C^k$ ($G$ acts on $\C^k$ via $\rho$) is a flat hermitian bundle.
Conversely, if $E$ is a flat hermitian bundle of rank $k$ over $BG$, it is well-known  that $E\cong E_\rho$ where $\rho:\pi_1(G)\cong G \to U(k)$ is the monodromy representation of $E$, see (for example )\cite{DadCarrion:almost_flat}.
\end{remark}
\section{Matricially stable $C^*$-algebras}\label{sec:msa}
In this section we prove Theorems~\ref{thm:3} and \ref{thm:amen} by establishing first their $C^*$-algebraic versions.
\begin{definition}\label{def:alg-wms}
A unital $C^*$-algebra $A$ is weakly matricially stable if for any ucp asymptotic homomorphism $\{\varphi_n:A \to M_{k_n}\}_n$ there are two sequences of unital finite dimensional $*$-representations  of $A$, denoted $(\pi_n)_n$ and $(\sigma_n)_n$, such that $\lim_n\|\varphi_n(a)\oplus\sigma_n(a) -\pi_n(a)\|=0$ for all $a\in A$. Note that a group $G$ is weakly matricially stable if and only if $C^*(G)$ is so.
\end{definition}
Recall that $\mathrm{Inf} K_0(A)=\{x\in K_0(A)\,:\,  \forall k \in \Z, \, \exists m>0,\, m([1_A]+kx )\geq 0\}$
is the subgroup of infinitesimal elements of $K_0(A)$. If $\varphi:A \to B$ is a unital $*$-homomorphism, then $\varphi_*(\mathrm{Inf} K_0(A))\subset \mathrm{Inf} K_0(B)$.
\begin{theorem}\label{thm-gen}
 Let $A$ be a separable unital K-quasidiagonal $C^*$-algebra that satisfies the UCT and such that $K_*(A)$ is finitely generated.
 Suppose that $A$ is weakly matricially stable.
 Then $K^0(A)$ is generated by finite dimensional  representations of $A$ and $\mathrm{Inf} K_0(A)$ is
 a torsion group.
\end{theorem}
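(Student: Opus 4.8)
The plan is to prove the two assertions in order: first that $K^0(A)=G_{\mathrm{fin}}$, where $G_{\mathrm{fin}}\subseteq K^0(A)=KK(A,\C)$ denotes the subgroup generated by the classes $[\pi]$ of unital finite dimensional $*$-representations $\pi\colon A\to M_d$; and then to deduce the statement about $\mathrm{Inf}\,K_0(A)$ from the first via the index pairing $K_0(A)\times K^0(A)\to\Z$. For the deduction: if $x\in\mathrm{Inf}\,K_0(A)$ and $\pi\colon A\to M_d$ is a unital finite dimensional representation, then for each $k\in\Z$ there is $m>0$ with $m([1_A]+kx)\ge 0$ in $K_0(A)$, so pairing with $[\pi]$ and using that $\pi_*$ is order preserving gives $m\bigl(d+k\langle x,[\pi]\rangle\bigr)\ge 0$ for every $k\in\Z$; this forces $\langle x,[\pi]\rangle=0$. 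Hence $x$ pairs trivially with $G_{\mathrm{fin}}=K^0(A)$, and since $A$ satisfies the UCT the pairing map $K^0(A)\to\Hom(K_0(A),\Z)$ is surjective, so $x$ is annihilated by every homomorphism $K_0(A)\to\Z$, i.e.\ $x\in\mathrm{Tor}\,K_0(A)$, which is finite because $K_*(A)$ is finitely generated.

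To prove $K^0(A)=G_{\mathrm{fin}}$, I would fix $x\in K^0(A)$ and, using K-quasidiagonality ($x=x_{\mathrm{qd}}$), represent $x$ by a quasidiagonal Cuntz pair: unital $*$-homomorphisms $\varphi,\psi\colon A\to M(K(H))$ with $\varphi(a)-\psi(a)\in K(H)$, together with an approximate unit of projections $(p_n)_n\subset K(H)$ satisfying $\|[\varphi(a),p_n]\|\to 0$ and $\|[\psi(a),p_n]\|\to 0$ for all $a\in A$. Compressing gives unital ucp maps $\varphi_n:=p_n\varphi(\cdot)p_n$ and $\psi_n:=p_n\psi(\cdot)p_n$ from $A$ into $p_nK(H)p_n\cong M_{k_n}$; each of the sequences $(\varphi_n)_n$, $(\psi_n)_n$ is a ucp asymptotic homomorphism, and $\varphi_n(a)-\psi_n(a)=p_n(\varphi(a)-\psi(a))p_n\to\varphi(a)-\psi(a)$ in norm. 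By the quasidiagonal $KK$-theory machinery of \cite{AA}, \cite{Kubota2} (the same mechanism underlying \eqref{eqn:important}, now with integral coefficients and applied to the class itself rather than its image under $\nu_Y$), for $n$ large the approximate Cuntz pair $(\varphi_n,\psi_n)$ computes $x$.

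Next I would apply weak matricial stability of $A$: to $(\varphi_n)_n$ it produces unital finite dimensional representations $\sigma_n,\rho_n$ with $\|\varphi_n(a)\oplus\sigma_n(a)-\rho_n(a)\|\to 0$, and to $(\psi_n)_n$ it produces $\sigma'_n,\rho'_n$ with $\|\psi_n(a)\oplus\sigma'_n(a)-\rho'_n(a)\|\to 0$. Adding the common finite dimensional summand $\sigma_n\oplus\sigma'_n$ to both legs of $(\varphi_n,\psi_n)$ alters neither the difference of the legs nor the associated class, so $x$ is still computed by $\bigl(\varphi_n\oplus\sigma_n\oplus\sigma'_n,\ \psi_n\oplus\sigma'_n\oplus\sigma_n\bigr)$ (the reordering of summands on the second leg is conjugation by a permutation unitary, hence harmless). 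By the two estimates, for each $a\in A$ this pair is within $o(1)$ in norm of the honest finite dimensional Cuntz pair $\bigl(\rho_n\oplus\sigma'_n,\ \rho'_n\oplus\sigma_n\bigr)$, whose $KK$-class is $[\rho_n]+[\sigma'_n]-[\rho'_n]-[\sigma_n]\in G_{\mathrm{fin}}$. A closeness argument of the type in \cite{AA}, \cite{Kubota2}—performed inside the quasidiagonal extension generated by $\varphi(A),\psi(A)$ and $K(H)$, and using that the difference of the two legs stabilizes to the fixed compact operator $\varphi(a)-\psi(a)$—then forces $x=[\rho_n]+[\sigma'_n]-[\rho'_n]-[\sigma_n]$ for all large $n$; hence $x\in G_{\mathrm{fin}}$.

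The step I expect to be the main obstacle is precisely this last one: promoting ``$\bigl(\varphi_n\oplus\sigma_n\oplus\sigma'_n,\ \psi_n\oplus\sigma'_n\oplus\sigma_n\bigr)$ is norm close to a genuine finite dimensional Cuntz pair'' to ``the two pairs represent the same $KK(A,\C)$-class''. Because $A$ is not assumed semiprojective, nearby $*$-homomorphisms need not be homotopic, so one cannot argue abstractly; the argument must exploit that the compressions arise from a single quasidiagonal pair whose difference cocycle stabilizes to a fixed finite rank operator, and must use finite generation of $K_*(A)$ and the UCT to control $KK(A,\C)$ and pass to the limit, as in the quasidiagonal $KK$-theory of \cite{AA}, \cite{Kubota2}. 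The remaining ingredients—the common-summand trick, the order/pairing argument, and the UCT bookkeeping—are routine.
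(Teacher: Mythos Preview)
Your overall strategy matches the paper's, and your argument for $\mathrm{Inf}\,K_0(A)$ is essentially identical to it (the paper phrases it as: a finite dimensional representation $\pi$ sends $\mathrm{Inf}\,K_0(A)$ to $\mathrm{Inf}\,K_0(\C)=0$, which is your pairing computation). The gap is exactly where you flag it: promoting ``the approximate Cuntz pair is norm-close to a genuine finite dimensional one'' to ``they represent the same element of $K^0(A)$'' is not something that follows from an abstract closeness principle, nor directly from the machinery of \cite{AA}, \cite{Kubota2}. You say one must ``use finite generation of $K_*(A)$ and the UCT to control $KK(A,\C)$'', which is correct, but you have not said \emph{how}, and the plain UCT is not enough because of the $\mathrm{Ext}$ term.

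The paper's resolution is the Universal Multicoefficient Theorem of \cite{DadLor:duke}: under the UCT and with $K_*(A)$ finitely generated (so $\mathrm{Pext}$ vanishes), there is an isomorphism
\[
\theta\colon K^0(A)\ \stackrel{\cong}{\longrightarrow}\ \Hom_{\Lambda'}\bigl(\uK'(A),\,\uK'(\C)\bigr),
\]
where $\uK'(-)=\bigoplus_{p\in\mathcal{P}_N}K_*(-;\Z/p)$ is a \emph{finite} direct sum of K-groups with coefficients and $\Lambda'$ is the corresponding finite set of B\"ockstein operations. By \cite[Prop.~2.5]{AA}, for each $y\in\uK'(A)$ one has $(\varphi_n^{(0)})_\sharp(y)-(\varphi_n^{(1)})_\sharp(y)\equiv y\otimes_A x$ (tail equivalence). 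After applying weak matricial stability as you describe, one obtains genuine representations $\pi_n^{(i)},\sigma_n^{(i)}$, and K-theory stability under small perturbations (applied componentwise, which is elementary) gives $(\pi_n^{(0)})_*(y)-(\pi_n^{(1)})_*(y)\equiv y\otimes_A(x+x_n)$ where $x_n=[\sigma_n^{(0)}]-[\sigma_n^{(1)}]$. The crucial point is that $\uK'(A)$ is a \emph{finitely generated} $\Lambda'$-module, so a single $m$ turns all these tail equivalences into equalities on a generating set simultaneously; then the two sides agree as elements of $\Hom_{\Lambda'}(\uK'(A),\uK'(\C))$, and since $\theta$ is an isomorphism, $x+x_m=[\pi_m^{(0)}]-[\pi_m^{(1)}]$ in $K^0(A)$.

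In short: the missing idea is not a perturbation argument at the level of Cuntz pairs, but the UMCT identification of $K^0(A)$ with a $\Hom_{\Lambda'}$-group on which the approximate pair acts, together with the observation that finite generation of this $\Lambda'$-module lets one pass from tail equivalence to equality at a fixed stage.
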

\begin{proof}
 $A$ satisfies the UCT if and only if
  it satisfies the UMCT of \cite{DadLor:duke}.  A concise account of the UMCT is included in \cite{Dad-Meyer}.
  Let \(\mathcal{P}\subseteq\N\) be the set consisting of~\(0\) and all prime powers.
   The total K-theory group of a $C^*$-algebra $A$ is defined as
 \(
\uK(A)=\bigoplus_{p\in \mathcal{P}} K_*(A;\Z/p).
\)
$\uK(A)$ is a module over the ring $\Lambda$ of {B\"ockstein operations}.
  Since  $K_1(A)$ is finitely generated, $\mathrm{Pext}(K_1(A),K_0(\C))=0$ and by the UMCT we have an isomorphism
\begin{equation}\label{umct}
\theta: K^0(A)\stackrel\cong\longrightarrow \Hom_{\Lambda}(\uK(A),\uK(\C))
\end{equation}
induced by the Kasparov product  $K_*(A;\Z/p)\times K^0(A) \to K_*(\C;\Z/p),$ $\theta(x)(y)=y\otimes_A x$.
Since $K_*(A)$ is finitely generated, $\uK(A)$ is a finitely generated $\Lambda$-module. Moreover, if  $\mathrm{Tor} K_*(A)$ has order $N$ and
$\mathcal{P}_N=\{p\in \mathcal{P}: p | N\}\cup \{0\}$, then by \cite[Cor.2.11]{DadLor:duke} we can replace $\uK(-)$ and $\Lambda$ in \eqref{umct} by $\uK'(-)$ and $\Lambda'$, where $\uK'(A)=\bigoplus_{p\in \mathcal{P}_N} K_*(A;\Z/p)$ and $\Lambda'$ is the corresponding subset of {B\"ockstein operations} acting on $\uK'(A)$.
Since $A$ is K-quasidiagonal, any element $x\in K^0(A)$ is represented by a Cuntz pair $(\varphi^{(0)},\varphi^{(1)})$ for which there is an increasing approximate unit of $K(H)$ consisting of projections, denoted $(p_n)_n$, such that $\lim_{n}\|[\varphi^{(i)}(a),p_n]\|=0$, $i=0,1$, for all $a\in A$.
Then $p_n\varphi^{(i)}(\cdot)p_n$ are finite rank cp asymptotic morphisms which we can perturb to ucp asymptotic homomorphisms $\varphi_n^{(i)}:A \to M_{k_n^{(i)}}$.
It follows  by \cite[Prop.2.5]{AA} that for each $y\in \uK(A):$
\[(\varphi^{(0)}_{n})_\sharp(y)-(\varphi^{(1)}_{n})_\sharp(y)\equiv y\otimes_{A} x.\]
Suppose now that $A$ is weakly matricially stable. Then there are sequences of finite dimensional representations of $A$, $(\sigma^{(i)}_n)$ and  $(\pi^{(i)}_n)_n$, $i=0,1$, such that
$\lim_n\|\varphi^{(i)}_{n}(a)\oplus \sigma^{(i)}_{n}(a)-\pi^{(i)}_{n}(a)\|=0$, $i=0,1$, for all $a \in A$.
  Let $x_n$ denote the class of $[\sigma^{(0)}_n,\sigma^{(1)}_n]\in K^0(A).$
Since K-theory is stable under small perturbations, we obtain that for all $y\in \uK(A):$
\[(\pi^{(0)}_{n})_*(y)-(\pi^{(1)}_{n})_*(y)\equiv (\varphi^{(0)}_{n}\oplus \sigma^{(0)}_{n})_\sharp (x) -(\varphi^{(1)}_{n}\oplus \sigma^{(1)}_{n})_\sharp (x)\equiv y\otimes_{A} (x+x_n).\]
Since the maps $\pi^{(i)}_{n\, *}$  are $\Lambda'$-linear and since the $\Lambda'$-module $\uK'(A)$ is finitely generated, there is a sufficiently large  $m$ such that $(\pi^{(0)}_{m})_*(y)-(\pi^{(1)}_{m})_*(y)=y\otimes_{A}( x+x_m)=y\otimes_{A} x+(\sigma^{(0)}_{m})_*(y)-(\sigma^{(1)}_{m})_*(y)$ for all $y \in  \uK'(A)$.
Since $\theta$ is an isomorphism, it follows that  $x=[\pi^{(0)}_{m}]-[\pi^{(1)}_{m}]-[\sigma^{(0)}_{m}]+[\sigma^{(1)}_{m}]$. This finishes the proof of the first part of the statement.

We prove the second part by contradiction. If $y\in \mathrm{Inf} K_0(A)$ is a non-torsion element, then there is a homomorphism $h:K_0(A)\to \Z$ such that $h(y)\neq 0$.
By the UCT, $\theta$ induces a surjection $K^0(A)\to Hom(K_0(A),\Z)$.
 Thus there is $x\in K^0(A)$ such that $\theta(x)=h$. By the first part of the proof, we find two finite dimensional representations $\pi_i:A \to M_{k_i}$ of $A$ such that $x=[\pi_0]-[\pi_1].$ It follows that $h(y)=\theta(x)(y)=(\pi_0)_*(y)-(\pi_1)_*(y)=0$ since
 $\mathrm{Inf}K_0(\C)=0$.
\end{proof}

 Eilers, Loring and Pedersen \cite{ELP} introduced and studied natural noncommutative analogues of CW complexes (NCCW). They showed that if $A$ is a
 2-dimensional NCCW such that $\mathrm{Inf} K_0(A)$ is a torsion group, then $A$ is matricially stable.
 We show below that the reverse implication also holds.
 The $C^*$-algebras associated with wallpaper groups are examples of 2-dimensional NCCW complexes.
 The list of all matricially stable crystallographic groups was obtained \cite{ESS}. In addition to Theorem~\ref{thm:2}, the following corollary sheds new lights on the results of \cite{ESS} concerning these groups, see \cite[Remarks 4.4-4.5]{ESS}.
\begin{corollary}
  A 2-dimensional NCCW $A$ is matricially stable if and only $\mathrm{Inf} K_0(A)$ is a torsion group.
\end{corollary}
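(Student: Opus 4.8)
The plan is to derive the nontrivial ("only if") implication from Theorem~\ref{thm-gen}, the reverse implication being exactly the theorem of Eilers, Loring and Pedersen \cite{ELP} quoted above. So I would assume that the $2$-dimensional NCCW complex $A$ is matricially stable and aim to conclude that $\mathrm{Inf}\,K_0(A)$ is a torsion group.

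First I would note, exactly as in the group case discussed in the introduction, that matricial stability of $A$ forces weak matricial stability in the sense of Definition~\ref{def:alg-wms}: given a ucp asymptotic homomorphism $\{\varphi_n:A\to M_{k_n}\}_n$, matricial stability provides genuine $*$-homomorphisms $\pi_n:A\to M_{k_n}$ with $\|\varphi_n-\pi_n\|\to 0$, and for $n$ large these are automatically unital (the projection $\pi_n(1_A)$ lies within distance $<1$ of $1_{k_n}$, hence equals it), so the defining condition of Definition~\ref{def:alg-wms} holds with the correcting sequence $\sigma_n$ trivial. Thus it is enough to check that $A$ satisfies the hypotheses of Theorem~\ref{thm-gen} and to read off its conclusion.

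Next I would verify those hypotheses. The algebra $A$ is separable and unital. Since a finite NCCW complex is assembled from commutative $C^*$-algebras by finitely many pullbacks along mapping-cone and mapping-cylinder constructions, it belongs to the bootstrap class and therefore satisfies the UCT; and an induction over its finitely many cells, using the associated six-term exact sequences, shows that $K_*(A)$ is finitely generated. The remaining point, and the only one that is not quite a formality, is K-quasidiagonality of $A$. Here the key observation is that a $2$-dimensional NCCW complex is subhomogeneous — the dimensions of its irreducible representations are bounded by the matrix sizes entering its construction — so every primitive ideal of $A$ is the kernel of a finite dimensional irreducible representation; since the primitive ideals intersect to $\{0\}$, $A$ is residually finite dimensional, hence admits a faithful block-diagonal representation on a separable Hilbert space and is quasidiagonal. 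Applying Remark~\ref{remark:qdd}(b) with $D=A$ and $B=\C$ then gives $K^0(A)=K^0(A)_{qd}$, i.e. $A$ is K-quasidiagonal.

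With all the hypotheses in place, Theorem~\ref{thm-gen} applies and yields that $\mathrm{Inf}\,K_0(A)$ is a torsion group (and, as a bonus, that $K^0(A)$ is generated by finite dimensional representations of $A$), which is the "only if" direction. I do not expect a real obstacle in this argument: the substantive content is entirely in Theorem~\ref{thm-gen} and in \cite{ELP}, and the work specific to this corollary is the bookkeeping identifying "$2$-dimensional NCCW" with those hypotheses — principally the passage from subhomogeneity to (K-)quasidiagonality via residual finite dimensionality. If anything needs care it is making sure the definition of matricial stability for $C^*$-algebras used in \cite{ELP} is the one for which the implication "matricially stable $\Rightarrow$ weakly matricially stable" is immediate, which is the case.
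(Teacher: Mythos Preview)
Your proposal is correct and follows essentially the same approach as the paper: cite \cite{ELP} for the $(\Leftarrow)$ direction and deduce $(\Rightarrow)$ from Theorem~\ref{thm-gen} after checking its hypotheses. The paper's proof is simply terser, recording that $A$ is type~I, quasidiagonal, and has finitely generated $K$-theory; your elaboration of quasidiagonality via subhomogeneity and residual finite dimensionality is a valid way to fill in what the paper leaves implicit.
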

\begin{proof}
  The implication $(\Leftarrow)$ was proven in \cite[Cor.8.2.2]{ELP}. The converse follows from Theorem~\ref{thm-gen} since $A$
  is type I, it is quasidiagonal and it has finitely generated K-theory.
\end{proof}
\begin{theorem}\label{thm:amen-alg}
  Let $A$ be a separable unital exact quasidiagonal $C^*$-algebra satisfying the UCT and such that $K_*(A)$ is finitely generated.
Then $A$ is weakly matricially stable if and only if $A$ is residually finite dimensional and $K^0(A)$  is generated by finite dimensional  representations of $A$.
\end{theorem}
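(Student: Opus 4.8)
The plan is to prove the two implications separately. For the forward implication, suppose $A$ is weakly matricially stable. Since $A$ is quasidiagonal and $\C$ is nuclear and unital, Remark~\ref{remark:qdd}(b) (with $D=A$, $B=\C$) gives $K^0(A)=KK(A,\C)=KK(A,\C)_{qd}=K^0(A)_{qd}$, i.e. $A$ is $K$-quasidiagonal; as $A$ also satisfies the UCT and has finitely generated K-theory, Theorem~\ref{thm-gen} then shows that $K^0(A)$ is generated by finite dimensional representations of $A$. It remains to see that $A$ is residually finite dimensional, and here I would use quasidiagonality directly: fix a faithful unital representation $\pi\colon A\to L(H)$ and an increasing sequence $(p_n)_n$ of finite dimensional projections with $p_n\to 1_H$ strongly and $\|[\pi(a),p_n]\|\to 0$ for all $a$. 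Then $\varphi_n:=p_n\pi(\cdot)p_n$ is a ucp asymptotic homomorphism with $\|\varphi_n(a)\|\to\|a\|$ (for a unit vector $\xi$ with $\|\pi(a)\xi\|$ near $\|a\|$ one has $p_n\pi(a)p_n\xi\to\pi(a)\xi$ in norm); weak matricial stability supplies unital finite dimensional representations $\sigma_n,\pi_n$ with $\|\varphi_n(a)\oplus\sigma_n(a)-\pi_n(a)\|\to 0$, whence $\|\pi_n(a)\|\to\|a\|$, so $\bigoplus_n\pi_n$ is faithful and $A$ is residually finite dimensional.

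For the converse, assume $A$ is residually finite dimensional with $K^0(A)$ generated by finite dimensional representations. Since $A$ is unital, residual finite dimensionality provides a sequence $(\nu_m)_m$ of unital finite dimensional representations with $\bigoplus_m\nu_m$ faithful; replacing the (finitely many, since $K_*(A)$ is finitely generated) generating representations of $K^0(A)$ by their unital restrictions to subspaces, I may assume every class in $K^0(A)$ is $[\gamma]-[\delta]$ with $\gamma,\delta$ unital finite dimensional representations assembled, with $n$-dependent multiplicities, from a fixed finite list. Given an arbitrary ucp asymptotic homomorphism $\{\varphi_n\colon A\to M_{k_n}\}_n$, it induces a genuine $*$-homomorphism $A\to\prod_nM_{k_n}/\bigoplus_nM_{k_n}$ and hence, by Remark~\ref{rem:push}, a $\Lambda$-linear pushforward of total K-theory. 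As $\uK(A)$ is a finitely generated $\Lambda$-module, for all large $n$ the pushforwards of a fixed generating set satisfy the defining $\Lambda$-relations of $\uK(A)$ and thus define a $\Lambda$-module map $\uK(A)\to\uK(\C)$; since $K_1(A)$ is finitely generated the Pext-term in the UMCT vanishes, so this map equals $\theta(x_n)$ for a unique $x_n\in K^0(A)$. Writing $x_n=[\gamma_n]-[\delta_n]$ as above and comparing $\theta(x_n)([1_A])$ with $(\varphi_n)_\sharp([1_A])=[1_{M_{k_n}}]$ yields $\dim\gamma_n-\dim\delta_n=k_n$.

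Next, knowing the $x_n$, I would build absorbing representations $\rho_n:=\bigl(\bigoplus_{m=1}^{n}\nu_m\bigr)^{\oplus R_n}$, with $R_n$ chosen large enough (depending on $k_n$, on the multiplicities in $\gamma_n,\delta_n$, and on how multiplicative $\varphi_n$ is on the $n$-th set of an exhaustion of $A$) to make a quantitative stable uniqueness statement applicable. Then $\varphi_n\oplus\delta_n\oplus\rho_n$ and the unital finite dimensional representation $\gamma_n\oplus\rho_n$ have the same dimension $\dim\gamma_n+\dim\rho_n$ (by $\dim\gamma_n-\dim\delta_n=k_n$) and the same total K-theory invariant (since $[\varphi_n]+[\delta_n]=x_n+[\delta_n]=[\gamma_n]$ and pushforward is additive). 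By a quantitative stable uniqueness theorem for separable unital exact $C^*$-algebras satisfying the UCT — this is where exactness of $A$ enters — applied with the absorbing representations $\rho_n$, there are unitaries $u_n$ with
\[ \bigl\|u_n\bigl(\varphi_n(a)\oplus\delta_n(a)\oplus\rho_n(a)\bigr)u_n^{*}-\bigl(\gamma_n(a)\oplus\rho_n(a)\bigr)\bigr\|\longrightarrow 0\qquad(a\in A). \]
Setting $\sigma_n:=\delta_n\oplus\rho_n$ and $\pi_n:=u_n^{*}(\gamma_n\oplus\rho_n)u_n$, both unital finite dimensional representations of $A$, gives $\|\varphi_n(a)\oplus\sigma_n(a)-\pi_n(a)\|\to 0$, so $A$ is weakly matricially stable.

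The hard part will be the converse, and specifically having the stable uniqueness theorem in exactly the form needed: for exact (not just nuclear) $A$ with the UCT, comparing a ucp asymptotic homomorphism into matrices with a genuine finite dimensional representation of equal dimension and total K-theory, and with the absorbing summand an honest representation — the latter being precisely what residual finite dimensionality provides. One must also arrange a single sequence of unitaries working for all $a$ at once, which is why finite generation of $K_*(A)$ is indispensable: it collapses the matching of invariants to the finitely many constraints carried by $x_n$ and lets the $\rho_n$ be chosen uniformly. The forward direction, by contrast, is comparatively soft, resting only on Theorem~\ref{thm-gen} and on the fact that quasidiagonality yields asymptotically isometric ucp approximations that weak matricial stability then promotes to a separating family of finite dimensional representations.
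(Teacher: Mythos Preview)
Your forward direction is the same as the paper's: quasidiagonality gives asymptotically isometric ucp approximations, weak matricial stability promotes them to a separating family of finite dimensional representations (so $A$ is RFD), and Remark~\ref{remark:qdd}(b) plus Theorem~\ref{thm-gen} handle $K^0(A)$.

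For the converse your overall strategy---match total $K$-theory and then invoke stable uniqueness---is also the paper's, but the execution differs. The paper does \emph{not} work pointwise in $n$. Instead it passes to $B=\prod_n M_{r_n}/\bigoplus_n M_{r_n}$, where the sequences $(\varphi_n\oplus\sigma_n)_n$ and $(\psi_n)_n$ become genuine unital $*$-homomorphisms $\Phi,\Psi:A\to B$; it then checks that the natural map $J:\uK(B)\to\prod_n\uK(M_{r_n})/\bigoplus_n\uK(M_{r_n})$ is injective (a short computation using $K_1(B_n)=0$ and the B\"ockstein sequence), so that $(\phi_n)_\sharp\equiv(\psi_n)_\sharp$ forces $\Phi_*=\Psi_*$ on $\uK'$ and hence $[\Phi]=[\Psi]$ in $KK(A,B)$ by the UMCT. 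Exactness of $A$ is used to know $\Phi,\Psi$ are nuclear, and then the asymptotic uniqueness theorem of \cite[Thm.~4.3]{DadEil:AKK} is applied \emph{once} in $B$, with a fixed stabilizer $\gamma_k=\eta_1\oplus\cdots\oplus\eta_k$ viewed in $M_{m_k}(\C\,1_B)$; lifting the resulting unitary back to $\prod_n M_{r_n}$ gives the desired estimate on a finite set, and a diagonal argument finishes.

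Your route avoids the $J$-injectivity check and the quotient construction, at the price of invoking a ``quantitative stable uniqueness theorem'' whose precise statement you do not give. Such theorems exist, but you should cite one that (a) applies to separable unital exact (not just nuclear) $A$ with the UCT, (b) compares a ucp $\delta$-multiplicative map with a genuine finite dimensional representation, and (c) permits the absorbing summand to be an honest representation, with the multiplicity $R_n$ controlling the output tolerance. Without a reference this is the loose end in your argument. The paper's approach buys a clean, citable uniqueness step at the cost of the small extra $K$-theory computation for $J$; your approach is more direct but leaves the key analytic input implicit.
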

\begin{proof}
  $(\Rightarrow)$ Since $A$ is quasidiagonal, there is a ucp asymptotic homomorphism $\{\varphi_n:A \to M_{k_n}\}_n$ which is asymptotically isometric. If $(\sigma_n)_n$ and $(\pi_n)_n$ are as in Definition~\ref{def:alg-wms}, then $(\pi_n)$ is also asymptotically isometric and hence $A$ is residually finite dimensional.
  The required property of $K^0(A)$ follows from Theorem~\ref{thm-gen}. $(\Leftarrow)$
  In order to prove that $A$ is weakly matricially stable is suffices to show that
  for any ucp asymptotic homomorphism $\{\varphi_n:A \to M_{k_n}\}_n$, for any finite subset $F$ of $A$ and $\varepsilon >0$,
  there are two sequences of finite dimensional representations, $(\alpha_n)$ and $(\beta_n)$, such that
\begin{equation}\label{eqn:amb}
  \limsup_n\|\varphi_n(a)\oplus \alpha_n (a)- \beta_n(a) \|<\varepsilon.
\end{equation}
  for all $a\in F$.
  Since $K_*(A)$ is finitely generated, as explained in the proof of Theorem~\ref{thm-gen}, we can reduce the $\Lambda$-module $\uK(A)$ in the UMCT to a finite array of groups and a finite number of B\"ockstein operations denoted by $\uK'(A)$ and $\Lambda'$. Thus the UMCT reads: $KK(A,B)\cong \Hom_{\Lambda'}(\uK'(A),\uK'(B))$.
     Since $\uK'(A)$ is a finitely generated $\Lambda'$ module and since the set $\Lambda'$ is finite we can associate to each $\varphi_n$ an element $h_n\in \Hom_{\Lambda'}(\uK'(A),\uK'(M_{r_n}))$
    such that $(\varphi_n)_\sharp(x) \equiv h_n(x)$, for all $x\in \uK'(A)$. Since $K^0(A)\cong \Hom_{\Lambda'}(\uK'(A),\uK'(\C))$, it follows from our assumption on $K^0(A)$ that for each $n$ there are finite dimensional representations $\sigma_n$ and $\psi_n$ of $A$ such that $h_n=(\psi_n)_*-(\sigma_n)_*$.  Thus,
  \((\varphi_n)_\sharp(x) \equiv(\psi_n)_*(x)-(\sigma_n)_*(x)\), for all $x\in \uK'(A)$.  Setting $\phi_n=\varphi_n \oplus \sigma_n:A \to M_{r_n}$ it follows that $( \phi_n)_\sharp (x)\equiv (\psi_n)_\sharp(x)$  for all  $x\in \uK'(A)$. Let $D=\prod_n M_{r_n}$, $I=\bigoplus_n M_{r_n}$ and $B=D/I$.
Consider the  unital $*$-homomorphisms $\Phi, \Psi :A \to B$ induced by the asymptotic homomorphisms $(\phi_n )_n$ and $(\psi_n)_n$. One verifies that $\Phi_*= \Psi _*: \uK'(A) \to \uK'(B)$ and hence that $[\Phi]=[\Psi]$ in $KK(A,B)$.
We substantiate this claim as follows.
Let $B_n=\prod_{k\geq n} M_{r_k}$. Then $B=\varinjlim B_n$ where $B_n \to B_{n+1}$ is the natural restriction map.
We are going to show that the map \[J: \uK(B) \cong \varinjlim \uK(\prod_{k\geq n} M_{r_k})\to \varinjlim \prod_{k\geq n} \uK(M_{r_k})\cong \prod_n \uK(M_{r_n})/\bigoplus_n \uK(M_{r_n})\] is injective. This will follow if one shows
 that the natural map $j_n: \uK(\prod_{k\geq n} M_{r_k})\to \prod_{k\geq n}\uK( M_{r_k})$ is injective. To that purpose we note that $K_1(B_n)=0$ and that
 \[K_0(B_n)=\{(x_k)_{k\geq n}\,:\, x_k \in \Z,\, \exists m\, \text{with}\, |x_k|\leq m r_k, \forall k\geq n \}.\]
 Since the map $K_0(B_n)\stackrel{\times p}\longrightarrow K_0(B_n)$ is injective, we deduce that $K_1(B_n;\Z/p)=0$
for $p\geq 2$ and all $n$. Using the exactness and the naturality of the B\"ockstein  sequence,  we have the following commutative diagram which is easily checked to have injective vertical maps:
$$
\xymatrix{
{0} \ar[r] & {K_0(\prod_{k\geq n} M_{r_k})}\ar[r]^{\times p}\ar[d] & {K_0(\prod_{k\geq n} M_{r_k})}\ar[r]\ar[d] &{K_0(\prod_{k\geq n} M_{r_k};\Z/p)}\ar[r]\ar[d]& {0}\\
{0} \ar[r] & {\prod_{k\geq n} K_0( M_{r_k})}\ar[r]^{\times p} & {\prod_{k\geq n} K_0( M_{r_k})}\ar[r] &{\prod_{k\geq n} K_0( M_{r_k};\Z/p)}\ar[r]& {0}
}
$$
Therefore all         the maps $j_n$ are injective and hence $J$ is injective. We have seen earlier  that  $( \phi_n)_\sharp (x)\equiv (\psi_n)_\sharp(x)$ for all $x\in \uK(A)$. This means precisely that $J \circ \Phi_*=J \circ \Psi_*$. Since $J$ is injective, it follows that $\Phi_*= \Psi_*$ as desired.
Let $(\eta_k)_k$ be a sequence of finite dimensional representations of $A$ that separates the points of $A$ and such that each $\eta_k$ occurs infinitely many times. Set $\gamma_k =\eta_1\oplus \cdots \oplus \eta_k$ and view it as a map
  $\gamma_k:A \to M_{m_k}(\C 1_B)\subset M_{m_k}(B)$. Let us note that $A$ is K-nuclear since it is KK-equivalent to a commutative $C^*$-algebra and hence $KK(A,B)\cong KK_{nuc}(A,B)$. Moreover, since $A$ is exact, the maps $\Phi$ and $\Psi$ are nuclear by \cite[Prop.3.3]{Dad;qdmor}.
  By \cite[Thm.4.3]{DadEil:AKK}, since $[\Phi]=[\Psi]$ in $KK(A,B)$, there is a sequence of unitaries $u_k\in M_{m_k+1}(B)$ such that $\lim_n\|\Psi(a)\oplus \gamma_k(a)-u_k(\Psi(a)\oplus \gamma_k(a))u_k^*\|=0$ for all $a\in A$.
 Thus, for any finite subset $F$ of $A$ and $\varepsilon >0$ there is a $k$ such that if we set $\gamma:=\gamma_k$, $m:=m_k$, and $u:=u_k$, then $\|\Psi(a)\oplus \gamma(a)-u(\Psi(a)\oplus \gamma(a))u^*\|<\varepsilon$ for all $a\in F$.
   We can view $\gamma$ as a map into $M_m(\C 1_D)$ with components $(\gamma_n)$. Lift  $u$ to a unitary $v\in M_{m+1}(D)$ with components $(v_n)$.
   It follows that for all $a\in F,$
   \begin{equation}\label{eqn:ama}
  \limsup_n\|\varphi_n(a)\oplus \sigma_n(a)\oplus \gamma_n (a)-v_n(\psi _n(a)\oplus \gamma_n (a))v_n^*\|<\varepsilon.
\end{equation}
This proves \eqref{eqn:amb} with $\alpha_n=\sigma_n\oplus \gamma_n$ and $\beta_n=v_n(\psi _n\oplus \gamma_n)v_n^*$.
\end{proof}
\begin{remark}  Two ucp asymptotic homomorphisms $\{\varphi^{(i)}_n:A \to M_{k^{(i)}_n}\}_n$, $i=0,1$, are called stably unitarily equivalent if there are two sequences of unital $*$-homomorphisms $\{\pi^{(i)}_n:A \to M_{r^{(i)}_n}\}_n$, $i=0,1$, and a sequence of  unitaries $(u_n)$ such that $$\lim_n \|\varphi^{(0)}_n (a)\oplus \pi^{(0)}_n(a) -u_n(\varphi^{(1)}_n (a)\oplus \pi^{(1)}_n(a))u_n^*\|=0$$ for all $a \in A$.
Let $A$ be a separable exact residually finite dimensional $C^*$-algebra satisfying the UCT. Arguing as in the proof of Theorem~\ref{thm:amen-alg} one shows that
    two ucp asymptotic homomorphism $\{\varphi^{(i)}_n:A \to M_{k^{(i)}_n}\}_n$, $i=0,1$, are  stably unitarily equivalent
 if and only if $(\varphi^{(0)}_n)_\sharp(x)\equiv (\varphi^{(1)}_n)_\sharp(x)$ for all $x \in K_0(A;\Z/p)$, $p \in \mathcal{P}$.
\end{remark}

\emph{Proof of Theorem~\ref{thm:3}:}

(i)  Higson and Kasparov \cite{HigKas:BC} proved that the Baum-Connes map is an isomorphism and that the canonical map $q_G:C^*(G)\to C^*_r(G)$ is a KK-equivalence. Moreover, Tu \cite{Tu:gamma} showed that $C^*(G)$ satisfies the UCT.
  Suppose now that $G$ is weakly matricially stable. Then $G$ is MAP by Lemma~\ref{lemma:qdmap} and hence quasidiagonal. By Proposition~\ref{cor:qd-embed} the canonical map $q_G:C^*(G)\to C^*_r(G)$ factors as:
$C^*(G) \stackrel{q_D}\longrightarrow D  \to C_r^*(G)$ with $D$ unital and quasidiagonal.
Since $q_G$ is a KK-equivalence, $q_D^*:K^0(D) \to K^0(C^*(G))$ is surjective. Using Remark~\ref{remark:qdd} we see that
\[ K^0(C^*(G))=q_G^* (K^0(C_r^*(G)))=q_D^*(K^0(D))= q_D^*(K^0(D)_{qd})\subseteq K^0(C^*(G))_{qd}.\]
Thus $C^*(G)$ is K-quasidiagonal. We conclude the argument by applying Theorem~\ref{thm-gen}.

(ii) Suppose now that $G$ is torsion free. By Theorem~\ref{thm:2}, $H^{even}(G;\Q)\cong \Q$. Since the Baum-Connes map is an isomorphism, we have   $K_0(C^*(G))\otimes \Q\cong RK_0(BG)\otimes\Q \cong H_{even}(G,\Q)\cong \Q$.

(iii) This was discussed in Remark~\ref{rem:flat}.\qed
\vskip 4pt
\emph{Proof of Theorem~\ref{thm:amen}:}
One direction follows Theorem~\ref{thm:3} since amenable groups have Haagerup's property.
The converse follows by applying Theorem~\ref{thm:amen-alg} to the nuclear $C^*$-algebra $C^*(G)$. Since $G$ is amenable, $C^*(G)$ is residually finite dimensional if and only if $G$ is MAP, see Proposition~\ref{Bk}.\qed

\section{Almost flat K-theory}\label{5}

Connes, Gromov and Moscovici \cite{CGM:flat} used the approximate monodromy correspondence which associates a group quasi-representation to an almost flat bundle
to prove the Novikov conjecture for large classes of groups.  Gromov indicates in \cite{Gromov:reflections, Gromov:curvature} how one constructs
  nontrivial almost flat $K$-theory classes for residually finite groups
 that are fundamental groups of even {dimensional}
  non-positively curved compact manifolds. We introduced in \cite{BB}, \cite{AA} an approach to almost flat K-theory via quasidiagonality of K-homology classes. This topic was further explored  in \cite{DadCarrion:almost_flat} where we discussed continuity properties of the approximate modronomy correspondence. The results of \cite{AA} and \cite{DadCarrion:almost_flat} were extended in \cite{Kubota1, Kubota2} to a relative setting for pair of groups which are residually amenable.

Let $Y$ be a compact Hausdorff space and let $(U_i)_{i\in I}$ be a fixed finite open cover of $Y$.
 A complex vector bundle on $Y$ of rank $m$ is
called $\varepsilon$-flat if is represented by a cocycle $v_{ij}:U_i\cap U_j \to U(m)$ such that
$\|v_{ij}(y)-v_{ij}(y')\|<\varepsilon$ for
all $y,y'\in U_i\cap U_j$ and all $i,j \in I$.
An element $x\in K^0(Y)$ is called almost flat if for any $\varepsilon>0$
there are $\varepsilon$-flat vector bundles $E,F$ such that $x=[E]-[F]$.
This property  does not depend on the cover $(U_i)_{i\in I}$.

  An element $x\in RK^0(BG)$ is called locally almost flat, if for any finite CW-subcomplex $j:Y \hookrightarrow BG$, the element
  $j^*(x)\in K^0(Y)$ is  almost flat.   We say that $x\in RK^0(BG)$ has  almost flat local multiples if for any $Y$ as above there is $m\geq 1$ such that  $m\cdot j^*(x)\in K^0(Y)$ is  almost flat. For a discussion of almost flatness, see (for example) \cite{AA}, \cite{DadCarrion:almost_flat}.

   By combining the methods of \cite{AA} and \cite{Kubota1},\cite{Kubota2}
   one obtains the following
  general result on the existence of almost flat K-theory classes. A rational  version of this result for residually amenable groups
  (including the relative case of pairs of groups) is due to Kubota \cite[Cor.5.14]{Kubota2}.
\begin{theorem}\label{thm:af}
  Let  $G$ be a countable  discrete quasidiagonal group that  admits a $\gamma$-element.
  Then all the elements of $RK^0(BG)$  have almost flat local multiples. If in addition $G$ is torsion free, then all the elements of $RK^0(BG)$ are locally almost flat.
\end{theorem}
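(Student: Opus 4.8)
The plan is to run the machine from the proof of Theorem~\ref{thm:2}, but to retain the \emph{geometry} of the vector bundles obtained by pushing the flat line bundle $\ell$ forward along compressions of a quasidiagonal $KK$-class, rather than only their $K$-theory classes. The two inputs that do the real work are already in hand: Theorem~\ref{cor:qd-embed-kk} (which, through Proposition~\ref{cor:qd-embed} and Proposition~\ref{prop:Kubota}, is where quasidiagonality of $G$ and the $\gamma$-element are used) gives that $\nu$ carries $KK(C^*(G),\QQ)_{qd}$ onto $RK^0(BG;\Q)$, and, when $G$ is torsion free, carries $K^0(C^*(G))_{qd}$ onto $RK^0(BG)$ integrally; and the concrete formula \eqref{eqn:important} describes $\nu_Y$ on quasidiagonal classes as a difference of pushforwards of $P=\sum_{i,j\in I}e_{ij}\otimes\chi_i\chi_j\otimes s_{ij}$.

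First I would fix a finite CW-subcomplex $j\colon Y\hookrightarrow BG$ and a finite cover $(U_i)_{i\in I}$ of $Y$ trivialising $\ell_Y$ through a group-valued cocycle $(s_{ij})$ with each $s_{ij}$ constant on $U_i\cap U_j$, and form $P$ as in Section~\ref{4}. Given $z\in RK^0(BG)$, Theorem~\ref{cor:qd-embed-kk} supplies $x\in K^0(C^*(G))_{qd}$ (torsion free case, coefficient algebra $\C$) or $x\in KK(C^*(G),\QQ)_{qd}$ (general case) with $\nu(x)=z$, respectively $\nu(x)=z\otimes 1$. Represent $x$ by a Cuntz pair together with an approximate unit of projections $(p_n)$ commuting asymptotically with both of its maps, compress, and perturb exactly as in the proof of Theorem~\ref{thm:2} to obtain ucp asymptotic homomorphisms $\varphi^{(0)}_n,\varphi^{(1)}_n\colon C^*(G)\to M_{k^{(r)}_n}(\C)$ (the matrix block $\QQ^{(r)}_n$ is absorbed, $\varphi^{(r)}_n$ landing in $M_{k^{(r)}_n}(\C e_0)$). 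By \eqref{eqn:important}, for all large $n$ the elements
\[
e^{(r)}_n\ =\ \sum_{i,j\in I} e_{ij}\otimes\chi_i\chi_j\otimes\varphi^{(r)}_n(s_{ij})\ \in\ M_{m k^{(r)}_n}(C(Y)),
\]
after a small perturbation to honest projections, are classes of genuine complex vector bundles $E^{(0)}_n,E^{(1)}_n$ on $Y$ with $[E^{(0)}_n]-[E^{(1)}_n]=\nu_Y(x)$, which is $j^*(z)$ in $K^0(Y)$ in the torsion free case and $j^*(z)\otimes 1$ in $K^0(Y)\otimes\Q$ in general.

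The next step is to check that $E^{(0)}_n$ and $E^{(1)}_n$ are $\varepsilon_n$-flat with $\varepsilon_n\to 0$. I would use that the cocycle $\varphi^{(r)}_n(s_{ij})$ is genuinely \emph{constant} on each overlap $U_i\cap U_j$ (since $s_{ij}$ is and $\varphi^{(r)}_n$ is linear), while $\{\varphi^{(r)}_n(s_{ij})\}$ is, uniformly in $i,j$, a $\delta_n$-approximate unitary $1$-cocycle with $\delta_n\to 0$: $\varphi^{(r)}_n(s_{ij})^*=\varphi^{(r)}_n(s_{ij}^{-1})$ because $\varphi^{(r)}_n$ is completely positive, so $\varphi^{(r)}_n(s_{ij})^*\varphi^{(r)}_n(s_{ij})\approx\varphi^{(r)}_n(e)\approx 1$, and $\varphi^{(r)}_n(s_{ij})\varphi^{(r)}_n(s_{jk})\approx\varphi^{(r)}_n(s_{ik})$ on triple overlaps by asymptotic multiplicativity. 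Feeding this data into the quasi-cocycle--to--almost-flat-bundle mechanism of \cite{AA} (see the discussion around \cite[Prop.~2.5]{AA} and \cite{DadCarrion:almost_flat}), the bundle defined by $e^{(r)}_n$ is represented on the same cover by a genuine unitary cocycle whose oscillation on each overlap is $O(\delta_n)$; hence $E^{(r)}_n$ is $O(\delta_n)$-flat, and so is any Whitney multiple $d\cdot E^{(r)}_n$ (use the block-diagonal cocycle). In the torsion free case this already exhibits $j^*(z)=[E^{(0)}_n]-[E^{(1)}_n]$ as a difference of $\varepsilon$-flat bundles for every $\varepsilon>0$, so $j^*(z)$ is almost flat and $z$ is locally almost flat. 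In general $[E^{(0)}_n]-[E^{(1)}_n]$ and $j^*(z)$ differ by an element of the finite torsion subgroup of $K^0(Y)$; taking $m$ to be the order of that subgroup --- depending on $Y$ but not on $\varepsilon$ --- gives $m\cdot j^*(z)=[m E^{(0)}_n]-[m E^{(1)}_n]$, a difference of $\varepsilon$-flat bundles for all large $n$, so $m\cdot j^*(z)$ is almost flat, i.e.\ $z$ has almost flat local multiples.

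I expect the main obstacle to be exactly the passage from the locally constant $\delta_n$-quasi-cocycle $\{\varphi^{(r)}_n(s_{ij})\}$ to a genuine, still almost-flat, unitary cocycle representing $[e^{(r)}_n]$: a quasi-cocycle of \emph{constant} unitaries generally cannot be perturbed to an exact cocycle of constant unitaries (that would force flatness, contradicting the very obstructions this paper produces), so the defect $\delta_n$ must be absorbed into a small slowly varying deformation of the transition functions rather than a constant one. This is precisely the mechanism isolated in \cite{AA} and \cite{DadCarrion:almost_flat}; everything else is a line-by-line transcription of the proof of Theorem~\ref{thm:2}, with Theorem~\ref{cor:qd-embed-kk} supplying surjectivity of $\nu$.
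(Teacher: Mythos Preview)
Your approach is the paper's approach, and the torsion free case is handled correctly: Theorem~\ref{cor:qd-embed-kk} gives integral surjectivity $\nu(K^0(C^*(G))_{qd})=RK^0(BG)$, and the passage from the locally constant quasi-cocycle $\{\varphi^{(r)}_n(s_{ij})\}$ to an honest $O(\delta_n)$-flat cocycle is precisely the content of \cite[Cor.~4.4]{AA} and \cite{DadCarrion:almost_flat}, as you say.

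In the general case there is a bookkeeping gap. You assert that $[E^{(0)}_n]-[E^{(1)}_n]$ equals $\nu_Y(x)=j^*(z)\otimes 1$ in $K^0(Y)\otimes\Q$, hence differs from $j^*(z)$ only by torsion. But $\varphi^{(r)}_n$ lands in $M_{k^{(r)}_n}(\C e_0)$ with $[e_0]=1/q^{(r)}_n\in K_0(\QQ)$, so under $K^0(Y)\to K^0(Y)\otimes\Q$ the class $[E^{(r)}_n]$ is sent to $[E^{(r)}_n]\otimes(1/q^{(r)}_n)$, and \eqref{eqn:important} actually reads
\[
j^*(z)\otimes 1=\frac{1}{q^{(0)}_n}[E^{(0)}_n]-\frac{1}{q^{(1)}_n}[E^{(1)}_n]\quad\text{in }K^0(Y)\otimes\Q.
\]
Clearing denominators and then torsion gives only that $T\,q^{(0)}_nq^{(1)}_n\cdot j^*(z)$ is a difference of $\varepsilon_n$-flat bundles, and this multiple depends on $n$ (hence on $\varepsilon$), which does not match the definition of ``almost flat local multiples'' where $m$ must depend on $Y$ alone.

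The clean fix is to stay with $\C$-coefficients throughout: by Theorem~\ref{cor:qd-embed-kk} one has $\gamma K^0(C^*(G))\subset K^0(C^*(G))_{qd}$, and \cite[Cor.~4.4]{AA} gives $\nu_Y(K^0(C^*(G))_{qd})\subset K^0_{af}(Y)$, the subgroup of almost flat classes. Tensoring with $\Q$ and using that $\gamma K^0(C^*(G))\otimes\Q\cong\gamma KK(C^*(G),\QQ)$ surjects onto $K^0(Y)\otimes\Q$ (this is the $\QQ$-coefficient surjectivity you already quote) shows that $K^0_{af}(Y)\otimes\Q=K^0(Y)\otimes\Q$. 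Since $K^0(Y)$ is finitely generated, $K^0_{af}(Y)$ has finite index, and that index is the uniform $m$ you need. This is what the paper's terse sentence ``$RK^0(Y;\Q)\cong RK^0(Y)\otimes\Q$'' is encoding.
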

\begin{proof} Suppose first that $G$ is torsion free. Then  \(\nu(KK(C^*(G),\C)_{qd})= RK^0(BG)\)  by Theorem~\ref{cor:qd-embed-kk}.
   By \cite[Cor.4.4]{AA}, for any finite CW-subcomplex $j:Y \hookrightarrow BG$, $j^*(\nu(KK(C^*(G),\C)_{qd})$ consists of entirely of almost flat elements. This last property is explained by the fact that the classes $\nu_Y(x)$ in \eqref{eqn:important} are almost flat
  as discussed in \cite{AA} and with more details in \cite{DadCarrion:almost_flat}. The degree of flatness of the right hand side of \eqref{eqn:important} is given by the degree to which the maps $\varphi^{(i)}_n$ are multiplicative.
  The first part of the statement is proved similarly using the fact that $\nu(KK(C^*(G),\QQ)_{qd})= RK^0(BG;\Q)$ as seen in the proof of Theorem~\ref{thm:2} and the observation that if $Y$ is a finite CW complex, then $RK^0(Y;\Q)\cong RK^0(Y)\otimes \Q$.
\end{proof}

 \textbf{Acknowledgements}
 I would like to thank Iason Moutzouris and Forrest Glebe for discussions that led to Example~\ref{qd-not-ra}.
\bibliographystyle{abbrv}

\end{document}